\newtheorem{theorem}{Theorem}[section]
\newtheorem{definition}[theorem]{Definition}
\newtheorem{lemma}[theorem]{Lemma}
\newtheorem{corollary}[theorem]{Corollary}
\newtheorem{problem}{Problem}
\title{On lattice tilings of $\mathbb{Z}^n$ by limited magnitude error balls $\mathcal{B}(n,2,k_{1},k_{2})$ with $k_1>k_2$}
\author{Ka Hin Leung$^{\text{a,}}$\thanks{Email address:  matlkh@nus.edu.sg.}, Ran Tao$^{\text{b,}}$\thanks{Email address:  rtao@sdu.edu.cn.}, Daohua Wang$^{\text{c,}}$\thanks{Email address:  wangdaohua@xidian.edu.cn.}~ and  Tao Zhang$^{\text{d,}}$\thanks{Email address:  zhant220@163.com.}\\
	\footnotesize $^{\text{a}}$ Department of Mathematics, National University of Singapore, Kent Ridge, Singapore 119260, Republic of Singapore. \\
	\footnotesize $^{\text{b}}$ School of Cyber Science and Technology, Shandong University, Qingdao 266237, China.\\
	\footnotesize $^{\text{c}}$ School of Cyber Engineering, Xidian University, Xi’an 710126, China.\\
	\footnotesize $^{\text{d}}$ Institute of Mathematics and Interdisciplinary Sciences, Xidian University, Xi'an 710126, China.\\}
\begin{document}
	\date{}

\maketitle

\begin{abstract}
Lattice tilings of $\mathbb{Z}^n$ by limited-magnitude error balls correspond to linear perfect codes under such error models and play a crucial role in flash memory applications. In this work, we establish three main results. First, we fully determine the existence of lattice tilings by $\mathcal{B}(n,2,3,0)$ in all dimensions $n$. Second, we completely resolve the case $k_1=k_2+1$. Finally, we prove that for any integers $k_1>k_2\ge0$ where $k_1+k_2+1$ is composite, no lattice tiling of $\mathbb{Z}^n$ by the error ball $\mathcal{B}(n,2,k_1,k_2)$ exists for sufficiently large $n$.
	
	\medskip
	
	\noindent {{\it Keywords\/}: Lattice tiling, perfect code, limited magnitude error ball.}
	
	\smallskip
	
	\noindent {{\it AMS subject classifications\/}: 52C22, 11H31, 11H71.}
\end{abstract}

\section{Introduction}
For integers $n\ge t\ge1$ and $k_{1}\ge k_{2}\ge0$,  the limited-magnitude $(n,t,k_{1},k_{2})$-error ball is defined as
\[\mathcal{B}(n,t,k_{1},k_{2}):=\{{\bf{a}}=(a_1,a_2,\dots,a_n)\in\mathbb{Z}^n:\ a_i\in[-k_{2},k_{1}],\ \text{wt}({\bf{a}})\le t\},\]
where $\text{wt}({\bf{a}})$ denotes the Hamming weight of ${\bf{a}}$. A subset $C\subset\mathbb{Z}^n$ is said to form a tiling of $\mathbb{Z}^n$ by $\mathcal{B}(n,t,k_{1},k_{2})$ if the translates
\[\mathcal{T}=\{\mathcal{B}(n,t,k_{1},k_{2})+c: c\in C\}\]
 partition $\mathbb{Z}^n$. When $C$ constitutes a lattice, the tiling is referred to as a lattice tiling.

In flash memory systems, data is stored using integer vector representations, making the limited magnitude error model particularly relevant \cite{CSBB10}. Within asymmetric error correction frameworks, symbol-level errors $|a-b|$ are constrained by predefined thresholds, while codeword-level errors affect only a bounded number of coordinates. Under this model, error-correcting codes correspond to packings of $\mathbb{Z}^n$ by $\mathcal{B}(n,t,k_{1},k_{2})$, whereas perfect codes correspond to tilings. Crucially, linear perfect codes are equivalent to the existence of lattice tilings by such error balls.

Perfect codes, renowned for their elegant structure and optimality in code size \cite{E22}, motivate the following fundamental question:

\begin{problem}
	For given $n\ge t\ge1$ and $k_{1}\ge k_{2}\ge0$, does there exist a lattice tiling of $\mathbb{Z}^{n}$ by $\mathcal{B}(n,t,k_{1},k_{2})$? Equivalently, does a linear perfect code exist for limited magnitude error ball $\mathcal{B}(n,t,k_{1},k_{2})$?
\end{problem}

When $t=n$, the error ball $\mathcal{B}(n,t,k_{1},k_{2})$ reduces to the hypercube $[-k_{2},k_{1}]^n$, prompting our focus on the non-trivial regime $1\le t\le n-1$.

Tilings and packings of $\mathbb{Z}^{n}$ by single-coordinate error balls ($t=1$) have been extensively studied due to their mathematical depth and practical applications. Notable research has been conducted on the cross shape $\mathcal{B}(n,1,k,k)$ and semi-cross shape $\mathcal{B}(n,1,k,0)$, as explored in
 \cite{HS86,KBE11,KLNY11,KLY12,S67,S84,SS94,S86,S87,T98,W95}. Building upon these studies, Schwartz \cite{S12} introduced the quasi-cross shape $\mathcal{B}(n,1,k_{1},k_{2})$, which has since aroused widespread interest \cite{S14,XL20,XL21,YKB13,YZZG20,ZG16,ZG18,ZZG17}. 
 
 For multi-coordinate errors ($t\ge2$), research remains limited. The tiling and packing properties of $\mathbb{Z}^{n}$ with  $\mathcal{B}(n,n-1,k,0)$ have been explored in \cite{BE13,KLNY11,S90}. More generally, the problem of tiling and packing for arbitrary parameters was investigated in \cite{WS22,WWS21}. Notably, Wei and Schwartz \cite{WS22} provided a classification of lattice tilings of $\mathbb{Z}^{n}$ using $\mathcal{B}(n,2,1,0)$ and $\mathcal{B}(n,2,2,0)$. Additionally, Zhang et al. \cite{ZLG2023} established the existence of lattice tilings of $\mathbb{Z}^{n}$ by $\mathcal{B}(n,2,1,1)$. Recently, Guan, Wei and Xiang \cite{GW25} demonstrated that no lattice tiling of $\mathbb{Z}^n$ by $\mathcal{B}(n,2,k_1,k_1-1)$ exists for sufficiently large $n$.
 
  This work advances the study of lattice tilings for two-coordinate errors ($t=2$). To illustrate the basic idea of our proofs,  we first provide a complete classification of lattice tilings by $\mathcal{B}(n,2,3,0)$.
\begin{theorem}\label{mainthm3}
	For $n\ge3$,  a lattice tiling of $\mathbb{Z}^n$ by $\mathcal{B}(n,2,3,0)$ exists if and only if $n=3$.
\end{theorem}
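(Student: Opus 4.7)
The plan is to set up the standard character-sum reformulation of the tiling problem and then analyze the resulting algebraic identities case by case to cover all $n \ne 3$. A lattice tiling by $\mathcal{B}(n,2,3,0)$ is equivalent to the existence of a finite abelian group $G = \mathbb{Z}^n / L$ of order $|B| = (9n^2 - 3n + 2)/2$ together with elements $g_1, \ldots, g_n \in G$ (the images of the standard basis) such that $(a_1, \ldots, a_n) \mapsto \sum_i a_i g_i$ restricts to a bijection $\mathcal{B}(n,2,3,0) \to G$. By orthogonality of characters, this is equivalent to $P(\chi) := 1 + \sum_i s_i(\chi) + \sum_{i<l} s_i(\chi) s_l(\chi) = 0$ for every nontrivial $\chi \in \widehat{G}$, where $s_i(\chi) = \chi(g_i) + \chi(g_i)^2 + \chi(g_i)^3$. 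Writing $\sigma_1 = \sum_i s_i$ and $p_2 = \sum_i s_i^2$ and using $\sum_{i<l} s_i s_l = (\sigma_1^2 - p_2)/2$, a direct expansion gives $2P(\chi) = (\sigma_1 + 1)^2 - (p_2 - 1)$, so the tiling condition is the uniform algebraic identity $(\sigma_1(\chi) + 1)^2 = p_2(\chi) - 1$ for every nontrivial $\chi$.

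For the sufficiency direction $n = 3$, I will exhibit an explicit tiling. Since $|B| = 37$ is prime, $G = \mathbb{Z}/37\mathbb{Z}$, and a short search produces a triple $(g_1, g_2, g_3)$ whose ball image exhausts $\mathbb{Z}/37\mathbb{Z}$. For the necessity direction $n \ge 4$, I will exploit characters of small order. Since $9n^2 - 3n + 2 \equiv 2 \pmod 3$ for every $n$, $|G| \equiv 1 \pmod 3$ always, so $G$ never has an order-$3$ character. When $|G|$ is even (precisely when $n \equiv 1, 2 \pmod 4$), $G$ carries a character $\chi$ of order $2$ with $s_i(\chi) \in \{3, -1\}$; writing $a = |\{i : \chi(g_i) = 1\}|$, the identity collapses to $16a^2 - 8an + n^2 - 3n + 2 = 0$, which forces $3n - 2$ to be a perfect square $m^2$ and $a = (n \pm m)/4 \in \mathbb{Z}_{\ge 0}$. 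This Diophantine constraint already eliminates most $n \equiv 1, 2 \pmod 4$, and the few surviving residues can be dispatched by passing to characters of order $4$ (or higher $2$-powers) when the $2$-Sylow of $G$ is large enough. For $n \equiv 0, 3 \pmod 4$, $|G|$ is odd, and I will then pick a character $\chi$ of the smallest prime $p$ dividing $|G|$ and analyze the identity in $\mathbb{Z}[\zeta_p]$, combining it with the elementary tiling constraint that $j_1 g_i + j_2 g_l \ne 0$ for $i \ne l$ and $j_1, j_2 \in \{1,2,3\}$ to force a contradiction.

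The main obstacle is the case where $|G|$ is itself a large prime, most notably $n = 4$ with $|G| = 67$, $n = 7$ with $|G| = 211$, $n = 8$ with $|G| = 277$, and analogous cases for larger $n$. Here the only nontrivial characters have order $p = |G|$, so no reduction to small roots of unity is available and the identity $(\sigma_1 + 1)^2 = p_2 - 1$ lives in the full cyclotomic ring $\mathbb{Z}[\zeta_p]$. The crux will be to extract, from this cyclotomic identity together with its $p - 1$ Galois conjugates, a clean numerical obstruction; I expect to complement the algebraic analysis with a combinatorial counting of the $9\binom{n}{2}$ pairwise sums $j_1 g_i + j_2 g_l$ in $\mathbb{Z}/p\mathbb{Z}$ to force a collision among ball images, thereby contradicting the tiling assumption.
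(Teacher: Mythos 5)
Your character reformulation is sound (the tiling is equivalent to $(\sigma_1(\chi)+1)^2=p_2(\chi)-1$ for all nontrivial $\chi$, and the order-$2$ computation $16a^2-8an+n^2-3n+2=0$, forcing $3n-2$ to be a square with $a=(n\pm\sqrt{3n-2})/4$ integral, is correct), and the $n=3$ construction in $\mathbb{Z}/37\mathbb{Z}$ matches the paper's example $T=\{1,10,26\}$. But the necessity direction for $n\ge 4$, which is the substance of the theorem, is not proved: your argument only eliminates some residue classes, leaves infinitely many surviving $n$ (e.g.\ $n=9,17,34,\dots$ pass the order-$2$ test and would need an order-$4$ or higher analysis that is not carried out and whose availability depends on the $2$-Sylow), gives no treatment at all of the odd-$|G|$ cases $n\equiv 0,3\pmod 4$ beyond a one-sentence plan, and explicitly concedes that the prime-$|G|$ cases ($n=4$, $|G|=67$; $n=7$, $|G|=211$; $n=8$, $|G|=277$; and infinitely many analogues) are unresolved (``I expect to\dots''). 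Crucially, there is no mechanism in your outline that bounds $n$ by an absolute constant, so the case analysis never terminates.

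The proposed fallback for the hard cases also fails as stated: counting the $9\binom{n}{2}$ pairwise sums $j_1g_i+j_2g_l$ with $j_1,j_2\in\{1,2,3\}$ cannot ``force a collision among ball images,'' because the tiling hypothesis says precisely that these images are pairwise distinct; no contradiction can come from the ball alone. The paper's proof instead compares the ball with auxiliary sets outside it: it introduces $S(i,j)=\{g^ih^j:g\ne h\in T\}$ for $i\in[-3,-1]$, $j\in[1,3]$ (i.e.\ differences such as $3g_l-g_i$), shows via the counters $\psi$ that each such set has size close to $n^2-n$ and that pairwise intersections are $O(n)$, and then bounds the union by $|G|-1$ to obtain the inequality $7n\le 42+5C$ with $C\le 4$ (Lemma 2.10), hence $n\le 8$; the cases $n=7,8$ are killed by parity of $|G|$ (which forces $C=1$), $n=4,5,6$ by a finite computational search, and $n=3$ by the explicit example. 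If you want to salvage your approach, you would need to replace the ``collision among ball images'' step by exactly this kind of comparison between the ball and translated/reflected products, or find a genuinely new cyclotomic obstruction for prime $|G|$ — neither is present in the proposal.
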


\medskip

Next, we will show how to provide a complete classification of lattice tilings by an infinite family $\mathcal{B}(n,2, k ,k-1)$.

\begin{theorem}\label{mainthm5}
	For any $n\ge3$ and $k\ge2$, there does not exist a lattice tiling of $\mathbb{Z}^n$ by $\mathcal{B}(n,2,k,k-1)$.
\end{theorem}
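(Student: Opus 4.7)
The plan is to assume, for contradiction, that a lattice tiling of $\mathbb{Z}^n$ by $\mathcal{B}(n,2,k,k-1)$ exists. Setting $G:=\mathbb{Z}^n/\Lambda$ and $g_i:=e_i+\Lambda\in G$, this is equivalent to the natural map $\phi:\mathcal{B}(n,2,k,k-1)\to G$, $(a_1,\dots,a_n)\mapsto\sum_i a_i g_i$, being a bijection; hence $|G|=1+n(2k-1)+\binom{n}{2}(2k-1)^2$ and each $g_i$ has order at least $2k$. The first step is to analyse the ``boundary'' element $(k+1)g_i$: since $(k+1)e_i\notin\mathcal{B}(n,2,k,k-1)$, the element $(k+1)g_i$ must equal $\phi(b)$ for some $b\in\mathcal{B}(n,2,k,k-1)$, and a case analysis on $\mathrm{supp}(b)$ yields a dichotomy: either \textbf{(A)} $(k+1)g_i=-(k-1)g_i$, forcing $|g_i|=2k$ and hence $kg_i$ to be an involution in $G$; or \textbf{(B)} a nontrivial linear relation of the form $cg_i=mg_j$ (with $c\in[1,2k]$, $m\in M:=[-(k-1),k]\setminus\{0\}$, $j\neq i$) or $(k+1)g_i=mg_j+m'g_l$ (with distinct $j,l\neq i$) ties $g_i$ to other generators.

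Assume first that (A) holds for every $i$. Then $kg_1,\dots,kg_n$ are $n$ pairwise distinct elements of order $2$ in $G$ (distinctness from injectivity of $\phi$); a direct calculation gives $|G|\equiv 1+n+\binom{n}{2}\pmod 2$, which is odd for $n\equiv 0,3\pmod 4$, yielding an immediate contradiction. For $n\equiv 1,2\pmod 4$, I would apply the character identity $\sum_{g\in G}\chi(g)=0$ with $\chi$ of order $2$; using $\sum_{m\in M}(-1)^m=-1$ and setting $r=|\{i:\chi(g_i)=1\}|$, the identity reduces to the Diophantine equation $4rk(n+k-2-rk)=(n-1)(n-2)$, whose integer solutions must then be ruled out by combining with higher-order character bounds or by exploiting the weight-$2$ structure of $\phi$.

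If instead (B) holds for some $i$, the relation propagates: adding $\pm kg_j$ to both sides of $cg_i=mg_j$ produces elements like $(m+k)g_j$ which, when $|m+k|>k$, must themselves be re-expressed via $\phi$, spawning further relations. The asymmetry of $[-(k-1),k]$ (one extra positive value) should prevent this chain from closing consistently, and eventually force two distinct $b,b'\in\mathcal{B}(n,2,k,k-1)$ with $\phi(b)=\phi(b')$, contradicting bijectivity. The main obstacle I anticipate is this case-(B) analysis: enumerating all possible relation chains and showing uniformly (in $n$ and $k$) that each terminates in a collision requires delicate bookkeeping, and handling residue classes like $n\equiv 1\pmod 4$ (where the character-theoretic argument alone leaves integer solutions such as $(n,k,r)=(9,2,1)$) will likely require combining the combinatorial (B)-analysis with the character-theoretic bounds of the previous paragraph.
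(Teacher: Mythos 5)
Your write-up is a plan with two load-bearing gaps rather than a proof, and you flag them yourself. In case (A) the parity argument only kills $n\equiv 0,3\pmod 4$, and the order-$2$ character identity you propose leaves integer solutions (you name $(n,k,r)=(9,2,1)$) that are ``to be ruled out by combining with higher-order character bounds or by exploiting the weight-$2$ structure'' --- nothing is actually done there. Case (B), which is the generic situation, is entirely conjectural: the claim that every relation chain $cg_i=mg_j$, $(k+1)g_i=mg_j+m'g_l$ must propagate to a collision of two ball elements is asserted (``should prevent \dots eventually force'') with no mechanism, no termination measure, and no uniformity in $n$ and $k$. There is good reason to doubt that purely local relation-chasing closes: the paper's own proof, after reducing to $n=3$, still needs an exhaustive computational search to dispose of $\mathcal{B}(3,2,2,1)$, and for the neighbouring parameters $\mathcal{B}(3,2,3,0)$ a tiling genuinely exists, so small configurations of such relations can be consistent and cannot all be refuted by bookkeeping of the kind you describe.

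For contrast, the paper does not split on how $(k+1)g_i$ is represented. It works globally: writing $G$ as the disjoint union of $\{e\}$, the sets $T^{(i)}$ and the products $S(i,j)$ (Theorem~\ref{main}), it shows $|S(-k,\pm1)|=n^2-n$ and that these sets are almost disjoint from the rest of the decomposition, the overlaps being controlled by the counters $\psi(\ell,i)$, $\psi(\ell,i,j)$ and in particular by $\psi(2k,0)$, which is bounded by a Sylow-subgroup argument using that $2k=k_1+k_2+1$ is composite (Lemma~\ref{lem8}). Packing these sets into $G$ forces the inequality $0\ge \tfrac{3n(n-1)}{2}-(2+2\psi(2k,0))n+4\psi(2k,0)-\psi(-k,k,k)$, which fails for all $n\ge 4$ (after a parity refinement for $n=4,5,6$), and the residual case $n=3$ is settled by an explicit algebraic contradiction for $k\ge 3$ and a computer search for $k=2$. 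Your element-order and character observations are compatible with this framework, but as it stands the proposal does not contain a proof: the (B) analysis and the leftover (A) cases are exactly the content that still has to be supplied, and the counting machinery of the paper is one way to supply it.
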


Unfortunately, we cannot solve all general cases completely. But we show a weaker version as follows: 

\begin{theorem}\label{mainthm1}
	Let $k_1>k_2\ge0$ with $k_1+k_2\ge3$, and $M$ be a positive integer. Suppose that all integers in the interval $[k_1+k_2+1,k_1+k_2+M]$ are composite. Let $p$ denote the smallest prime divisor of $k_1+k_2+1$, and define
	\begin{align*}
		A=&k_1+(4M-1)k_2,\\
		B=&(4M+8k_1+3\sqrt{p})(k_1-k_2)+2k_2(k_2+1)+2(k_1-k_2)^2(k_1-k_2+1)+(3+2k_2)M(M+1).
	\end{align*} 
 If $n\ge\lfloor\frac{B}{A}\rfloor+1$, then no lattice tiling of $\mathbb{Z}^n$ by $\mathcal{B}(n,2,k_1,k_2)$ exists.
\end{theorem}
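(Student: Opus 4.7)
The plan is to use the standard correspondence between lattice tilings of $\mathbb{Z}^n$ by $\mathcal{B}(n,2,k_1,k_2)$ and \emph{splitter sequences} in finite abelian groups: such a tiling exists if and only if there are a finite abelian group $G$ of order $1+n(k_1+k_2)+\binom{n}{2}(k_1+k_2)^2$ and elements $g_1,\dots,g_n\in G$ so that $0$, the scalar multiples $ag_i$ (with $a\in[-k_2,k_1]\setminus\{0\}$) and the pair sums $ag_i+bg_j$ (with $i<j$ and $a,b\in[-k_2,k_1]\setminus\{0\}$) are pairwise distinct. In particular each $g_i$ must have order at least $k_1+k_2+1$. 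The strategy is to combine the hypothesis that every integer in $[k_1+k_2+1,k_1+k_2+M]$ is composite with the compositeness of $k_1+k_2+1$ (via its smallest prime factor $p$) to eliminate all plausible orders for $g_i$ as soon as $n$ is large.

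Assuming such a splitter sequence exists, I would classify each generator by a structural invariant attached to the cyclic subgroup $\langle g_i\rangle$ and obtain a dichotomy: either $\operatorname{ord}(g_i)\le k_1+k_2+M$, in which case this order is composite and the reduction of $g_i$ modulo the unique proper subgroup of index $p$ inside $\langle g_i\rangle$ produces forbidden collisions among scalar multiples and pair sums unless $g_i$ couples with only a well-controlled set of other generators; or $\operatorname{ord}(g_i)>k_1+k_2+M$, in which case $g_i$ contributes many genuinely new elements to $G$. Quantitatively, the first horn yields an upper bound on the number of short-order generators of each type as an explicit function of $k_1,k_2,M$ and $p$, while the second shows that each long-order generator, together with suitable linear combinations of earlier ones, forces at least $A$ new elements not lying in the partial splitter image. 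Combining both sides gives $|G|\ge An-B$ up to lower-order terms, contradicting the exact formula for $|G|$ as soon as $n\ge\lfloor B/A\rfloor+1$. This is the same template that drives Theorems~\ref{mainthm3} and \ref{mainthm5}, which the authors deliberately present as warmups.

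The principal obstacle is the quantitative type-classification lemma and, in particular, the appearance of $\sqrt{p}$ in $B$. I expect this square-root term to arise from a Sidon-type constraint: once the generators are reduced modulo the unique order-$p$ subgroup of $\langle g_i\rangle$, the splitter property forces their reductions to form a Sidon set in $\mathbb{Z}/p\mathbb{Z}$, and such a set has at most $\sqrt{p}+O(1)$ elements by the classical Erd\H{o}s--Tur\'an bound. Propagating this constraint through the pair-sum enumeration contributes precisely the $3\sqrt{p}(k_1-k_2)$ piece of $B$. Matching the remaining coefficients $4M-1$, $4M+8k_1$, $2k_2(k_2+1)$, $2(k_1-k_2)^2(k_1-k_2+1)$ and $(3+2k_2)M(M+1)$ is then a careful but routine computation, driven by how many generators each bad type can accommodate and how these counts aggregate across the interval $[k_1+k_2+1,k_1+k_2+M]$. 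The real difficulty lies in choosing the right invariants in the first place and verifying that the Sidon constraint and the compositeness obstructions line up cleanly with the splitter condition.
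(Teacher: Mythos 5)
Your overall template (pass to the group--subset formulation of Theorem~\ref{main} and derive a counting contradiction in $G$) matches the paper, but the quantitative core of your plan does not work as stated, and it is not the paper's argument. Your concluding step, ``combining both sides gives $|G|\ge An-B$, contradicting the exact formula for $|G|$,'' cannot produce a contradiction: $|G|=1+n(k_1+k_2)+\binom{n}{2}(k_1+k_2)^2$ grows quadratically in $n$, so a linear lower bound of the form $An-B$ is vacuous. The constant $A=k_1+(4M-1)k_2$ is not a per-generator count of ``new elements''; in the actual proof it is (twice) the \emph{quadratic} coefficient of a surplus. The paper's mechanism is a swap of index sets: the disjoint pieces $S(i,j)$ with $k_2+1\le i\le j\le k_1$ (the set $\mathcal{Z}$) are removed and replaced by the larger family $\mathcal{X}=\mathcal{X}_1\cup\mathcal{X}_2$ of sets $S(i,j)$ with one exponent in $[-k_1,-k_2-1]$ or $[-k_2-M,-k_2-1]$; each such $S(i,j)$ still has size $n^2-n-O(n)$ (Lemma~\ref{lem4}), the gain $\sum_{\mathcal{X}}|S(i,j)|-\sum_{\mathcal{Z}}|S(i,j)|$ is $\tfrac{A}{2}(n^2-n)$, and all pairwise overlaps among the new sets and with the old decomposition are $O(n)$ with explicit constants (Lemmas~\ref{lem3}--\ref{lemma-new1} and \ref{la}--\ref{le}), totalling $\tfrac{B}{2}n$. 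The contradiction is then $\tfrac{A}{2}(n^2-n)<\tfrac{B}{2}n$, i.e.\ $n\le B/A+1$. Your proposal contains no mechanism producing a quadratic-size surplus, and the per-generator dichotomy by $\mathrm{ord}(g_i)$ plays no role in the paper; nothing in the tiling hypothesis prevents all generators from having large order, so the ``short-order versus long-order'' classification does not by itself yield the needed estimates.

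The second gap is the source of $\sqrt{p}$. Your suggestion --- that the reductions of the generators modulo the index-$p$ subgroup of $\langle g_i\rangle$ form a Sidon set in $\mathbb{Z}/p\mathbb{Z}$, bounded by Erd\H{o}s--Tur\'an --- is unsupported: the splitter condition constrains equalities in $G$, and after quotienting by a subgroup the required distinctness can collapse, so no Sidon property in $\mathbb{Z}/p\mathbb{Z}$ follows. In the paper the $3\sqrt{p}$ term comes from Lemma~\ref{lem8}(a): one first bounds the rank of the Sylow (elementary abelian) $p$-subgroup of $G$ by $2$ (Lemma~\ref{lem13}, itself a counting argument inside the $p$-core), then decomposes $T^{(\ell)}$ along cosets of the maximal elementary $p$-subgroup $H$ (with $k_1+k_2+1=p\ell$) and observes that the tiling forces the products $A_i^{(\alpha)}A_i^{(\beta)}$, $\alpha\neq\beta$, to be essentially distinct inside a single coset of $H$, of size at most $p^2$; this gives $\binom{p-1}{2}|A_i|(|A_i|-1)\le p^2$, hence $C\le 3\sqrt p$, and similarly $\psi(k_1+k_2+1,0)\le 2$ and $\psi(m,0)\le 3$ for the composite $m\in[k_1+k_2+1,k_1+k_2+M]$ --- which is exactly where the compositeness hypotheses (and the hypothesis on $M$) are consumed. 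Without these two ingredients (the index-set swap giving a quadratic surplus, and the coset-counting bounds on $C$ and $\psi(m,0)$), the proposed argument does not reach the stated bound, so as it stands the proof has an essential gap rather than being an alternative route.
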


As an immediate consequence, we obtain the following corollary:
\begin{corollary}\label{coro1}
	Let $k_1>k_2\ge0$ with $k_1+k_2\ge3$, and suppose that $k_1+k_2+1$ is composite. Let $p$ be the smallest prime divisor of $k_1+k_2+1$. If
	\[n\ge 2(k_1-k_2)^2+12(k_1-k_2)+ 2k_2+8.,\]
	 then a lattice tiling of $\mathbb{Z}^n$ by $\mathcal{B}(n,2,k_1,k_2)$ does not exist.
\end{corollary}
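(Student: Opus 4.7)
The plan is to derive the corollary as the specialization $M = 1$ of Theorem \ref{mainthm1}. With that choice, the hypothesis that every integer in $[k_1+k_2+1, k_1+k_2+M]$ is composite collapses to the assumption that $k_1+k_2+1$ is composite, matching the corollary exactly; and the quantities $A$, $B$ from Theorem \ref{mainthm1} become
\begin{align*}
A &= k_1 + 3k_2,\\
B &= (4 + 8k_1 + 3\sqrt{p})(k_1-k_2) + 2k_2(k_2+1) + 2(k_1-k_2)^2(k_1-k_2+1) + 2(3+2k_2).
\end{align*}
It then suffices to establish the purely numerical inequality
\[B \;<\; A\bigl(2(k_1-k_2)^2 + 12(k_1-k_2) + 2k_2 + 8\bigr),\]
for this gives $\lfloor B/A\rfloor + 1 \le 2(k_1-k_2)^2 + 12(k_1-k_2) + 2k_2 + 8$, and Theorem \ref{mainthm1} then applies to every $n$ meeting the corollary's bound.

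The only non-polynomial piece is the term $3\sqrt{p}\,(k_1-k_2)$ in $B$. Since $p$ is the smallest prime divisor of the composite integer $k_1+k_2+1 \ge 4$, one has the standard bound $p \le \sqrt{k_1+k_2+1}$, so $\sqrt{p} \le (k_1+k_2+1)^{1/4}$, which is negligible compared to the quadratic terms on the right. Substituting $d := k_1 - k_2 \ge 1$ and expanding both sides would reduce the target inequality to
\[3\sqrt{p}\,d + \text{(lower-order in } d, k_2) \;<\; 2d^2 + 8k_2 d^2 + 42 k_2 d + \text{(lower-order in } d, k_2),\]
a clean polynomial comparison in which the leading contributions $2d^3 + (12+8k_2)d^2 + 50k_2 d$ on the right easily dominate $2d^3 + 10d^2 + (8k_2 + 3\sqrt{p})d + \cdots$ on the left.

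The main obstacle, such as it is, is bookkeeping at the small boundary cases $d = 1$ with $k_2$ small, or $k_2 = 0$ with $d$ small, where the absolute constants in $B$ still compete with the slack on the right. Each such finite case can be checked directly using $\sqrt{p} \le \sqrt{2}$ (which is valid whenever $k_1+k_2+1 \in \{4,6,8,9\}$), while the remaining regime follows from a term-by-term comparison of the two polynomials in $d$ and $k_2$. No conceptual difficulty arises beyond the reduction to Theorem \ref{mainthm1} and the elementary absorption of the $\sqrt{p}$-term.
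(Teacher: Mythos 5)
Your proposal is correct and follows essentially the same route as the paper, which likewise deduces the corollary by setting $M=1$ in Theorem~\ref{mainthm1} and verifying numerically that $\lfloor B/A\rfloor+1$ is at most $2(k_1-k_2)^2+12(k_1-k_2)+2k_2+8$ (the paper absorbs the radical via $3\sqrt{p}\le 3\sqrt{k_1}$, you via $p\le\sqrt{k_1+k_2+1}$; your expansion of $A$, $B$ and the resulting comparison are accurate). The only blemish is the parenthetical claim that $\sqrt{p}\le\sqrt{2}$ covers $k_1+k_2+1=9$ (there $p=3$), but those boundary cases still satisfy the inequality with $3\sqrt{3}$, so the argument is unaffected.
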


Note that in Theorem \ref{mainthm1}, we impose a condition $k_1+k_2+1$ not a prime. 
This is essential as if $k_1+k_2+1$ is indeed a prime $p$, then our method does not work in case $|G|$ is a $p$-power. However, such cases do not seem to happen for large $n$.
In case $k_1+k_2+1=5$, it seems $|G|$ is $5$-power only when $n=2$. 

The remainder of this paper is organized as follows. Section~\ref{pre} introduces the necessary group ring foundations and derives representations for lattice tilings. Sections~\ref{sub3}--\ref{gen} present the proofs of our main theorems. Finally, Section~\ref{conclu} concludes the paper.

\section{Preliminaries}\label{pre}
Let $\mathbb{Z}[G]$ denote the group ring of a finite abelian group $G$ (written multiplicatively) over the integers $\mathbb{Z}$. Each element $A\in\mathbb{Z}[G]$ admits a formal sum representation:
\[ A=\sum_{g\in G}a_gg,\] 
where $a_g\in\mathbb{Z}$. For any $A=\sum_{g\in G}a_gg$ and integer $t$, we define
\[A^{(t)}=\sum_{g\in G}a_gg^t.\]
Standard group ring operations are defined componentwise:
\[\sum_{g\in G}a_gg\pm\sum_{g\in G}b_gg=\sum_{g\in G}(a_g\pm b_g)g,\]
\[\sum_{g\in G}a_gg\sum_{g\in G}b_gg=\sum_{g\in G}(\sum_{h\in G}a_{h}b_{h^{-1}g})g,\]
and for scalar multiplication with $\lambda\in\mathbb{Z}$:
\[\lambda\sum_{g\in G}a_gg=\sum_{g\in G}(\lambda a_g)g,\]
where $\lambda\in\mathbb{Z}$ and $\sum_{g\in G}a_gg,\sum_{g\in G}b_gg\in\mathbb{Z}[G]$.

The following fundamental theorem bridges lattice tilings with finite abelian group structures:
\begin{theorem}{\rm{\cite{HA12}}}\label{tilinggroup}
	A subset $V\subset\mathbb{Z}^{n}$ admits a lattice tiling $\mathcal{T}$ of $\mathbb{Z}^n$ if and only if there exists:
	\begin{enumerate}
		\item [(1)] A finite abelian group $G$ with $|G|=|V|$;
		\item [(2)] A group homomorphism $\phi:\mathbb{Z}^n\rightarrow G$ such that $\phi|_{V}:V\rightarrow G$ is bijective.
	\end{enumerate}
\end{theorem}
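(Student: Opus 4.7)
The plan is to translate between the geometric description of a lattice tiling (partition of $\mathbb{Z}^n$ into translates of $V$ indexed by a sublattice) and the algebraic description via a quotient group of $\mathbb{Z}^n$, with the two directions being near mirror images of each other.

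For the forward direction, suppose $V$ tiles $\mathbb{Z}^n$ with translating sublattice $C\subset\mathbb{Z}^n$, that is, the map $V\times C\to\mathbb{Z}^n$ given by $(v,c)\mapsto v+c$ is a bijection. Since $\mathbb{Z}^n$ is not contained in any finite union of affine subspaces of dimension less than $n$, while $V+C$ sits inside $|V|$ translates of the real span of $C$, the lattice $C$ must have full rank $n$. Hence $G:=\mathbb{Z}^n/C$ is a finite abelian group; I take $\phi$ to be the canonical projection. The bijection $V\times C\to\mathbb{Z}^n$ is then exactly the statement that every coset of $C$ contains a unique element of $V$, i.e.\ $\phi|_V:V\to G$ is bijective, and in particular $|G|=|V|$.

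For the reverse direction, given $G$ and $\phi$ as in the hypothesis, set $C:=\ker\phi$, a sublattice of $\mathbb{Z}^n$. For any $z\in\mathbb{Z}^n$, the bijectivity of $\phi|_V$ provides a unique $v\in V$ with $\phi(v)=\phi(z)$, so $z-v\in C$ and $z\in V+C$. Uniqueness of this decomposition is immediate: if $v+c=v'+c'$ with $v,v'\in V$ and $c,c'\in C$, then $\phi(v)=\phi(v')$ forces $v=v'$ by injectivity of $\phi|_V$, and hence $c=c'$. Thus $\{V+c:c\in C\}$ partitions $\mathbb{Z}^n$, which is exactly a lattice tiling.

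The argument is short and essentially a dictionary between set-theoretic partitions and kernel/bijectivity statements for homomorphisms of finitely generated abelian groups. The only mildly subtle point is the full-rank assertion for $C$ in the forward direction; I anticipate no substantive obstacle, and the result should really be regarded as a reformulation rather than a deep theorem.
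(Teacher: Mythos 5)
Your proof is correct. Note that the paper does not prove Theorem~\ref{tilinggroup} at all --- it is quoted from \cite{HA12} --- so there is no in-paper argument to compare with; your quotient-group dictionary ($G=\mathbb{Z}^n/C$ with $\phi$ the projection in one direction, $C=\ker\phi$ in the other) is exactly the standard proof, and your handling of the one subtle point, that finiteness of $V$ together with the fact that $\mathbb{Z}^n$ is not covered by finitely many proper affine subspaces forces $C$ to have full rank, is sound.
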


To characterize lattice tilings by $\mathcal{B}(n,2,k_1,k_2)$ via an abelian group $G$, we introduce the following notations.

\begin{definition} 
\begin{itemize}
\item [(a)] For any integers $i, j$, we define 
\[ S(i,j)=\{g^ih^j: g, h\in T \mbox{ and } g\ne h \};\] 
\item [(b)] For any nonzero integers $a<b$, we define 
\[ [a,b]^*=\{ i\neq 0: a\leq i \leq b\}.\]
\end{itemize} 
\end{definition}

\begin{theorem}\label{main}
For integers $k_1>k_2\ge0$, 	there exists a lattice tiling of $\mathbb{Z}^n$ by $\mathcal{B}(n,2,k_1,k_2)$ if and only if there exists a finite abelian group $G$ of order $1+n(k_1+k_2)+\binom{n}{2}(k_1+k_2)^2$ and $T\subset G$ with $|T|=n$ 
such that one of the following equivalent conditions is satisfied:
	\begin{enumerate}
\item [(1)] 
		$ G=e+\sum_{i=1}^{n}\sum_{j\in I}t_{i}^j+\sum_{1\le i<j\le n}(\sum_{k\in I}t_i^k)(\sum_{l\in I}t_j^l)$,
		where $I=[-k_2,k_1]^{*}$.
\item [(2)] $G$ is a disjoint union of 
\[ G= \{e\}\cup  \bigcup_{i\in [-k_2, k_1]^*} T^{(i )} \cup \bigcup_{i,j\in [-k_2,k_1]^{*}, i\le j}S(i,j),\]
i.e. all the sets in the above equations are disjoint. 
	\end{enumerate}
\end{theorem}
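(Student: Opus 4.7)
The plan is to apply Theorem \ref{tilinggroup} and translate the resulting bijectivity condition into the group-ring language used in (1), then show that (1) is just (2) after grouping terms by the exponent pattern.

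First, I would compute $|\mathcal{B}(n,2,k_1,k_2)|$ directly: one zero vector, $n(k_1+k_2)$ vectors of Hamming weight one, and $\binom{n}{2}(k_1+k_2)^2$ vectors of Hamming weight two, giving $|\mathcal{B}(n,2,k_1,k_2)|=1+n(k_1+k_2)+\binom{n}{2}(k_1+k_2)^2$. By Theorem \ref{tilinggroup}, a lattice tiling exists iff there is a finite abelian group $G$ of this order together with a homomorphism $\phi:\mathbb{Z}^n\to G$ whose restriction to $\mathcal{B}(n,2,k_1,k_2)$ is a bijection onto $G$. Since $\phi$ is determined by its values on the standard basis, I would set $t_i=\phi(e_i)$ and $T=\{t_1,\ldots,t_n\}$, so that the images of the vectors in $\mathcal{B}(n,2,k_1,k_2)$ under $\phi$ are exactly $e$, the elements $t_i^j$ with $1\le i\le n$ and $j\in I:=[-k_2,k_1]^*$, and the elements $t_i^kt_j^l$ with $1\le i<j\le n$ and $k,l\in I$. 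For $\phi|_{\mathcal{B}(n,2,k_1,k_2)}$ to be a bijection onto $G$, these images must be pairwise distinct elements whose union is $G$.

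Encoding this condition as a multiset identity in $\mathbb{Z}[G]$ gives exactly (1): the formal sum of the images equals $\sum_{g\in G}g$, with each element of $G$ appearing with coefficient one. Since the number of summands on the right-hand side of (1) already equals $|G|$, the identity is equivalent to all these terms being distinct, which in turn is equivalent to (2). To make this rigorous I would group the right-hand side of (1) by exponent pattern: the single term $e$ yields $\{e\}$; for each $j\in I$ the inner sum $\sum_{i=1}^n t_i^j$ yields the multiset $T^{(j)}$; for each unordered pair $\{k,l\}\subseteq I$ with $k=l$ the cross sum $\sum_{i<j}t_i^kt_j^l$ yields $S(k,k)$ (of size at most $\binom{n}{2}$ since $G$ is abelian); and for $k<l$ the combined contribution $\sum_{i<j}(t_i^kt_j^l+t_i^lt_j^k)$ yields $S(k,l)$ (of size at most $n(n-1)$, one per ordered pair of distinct elements of $T$). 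A short arithmetic check confirms that the maximum possible total size, namely
\[
1+n(k_1+k_2)+(k_1+k_2)\binom{n}{2}+\binom{k_1+k_2}{2}n(n-1),
\]
simplifies to $|G|$, so equality forces every set in the decomposition to attain its maximum size and to be disjoint from the others. This is precisely condition (2), and conversely summing the disjoint-union equation in (2) inside $\mathbb{Z}[G]$ produces (1).

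The content is essentially a careful translation plus counting, so there is no deep obstacle; the only point requiring care is the bookkeeping step where ordered pairs $(k,l)\in I\times I$ with $k\neq l$ are paired up to recover the unordered definition of $S(k,l)$, and the parallel pairing of $(i,j)$ and $(j,i)$ with $i<j$ on the other side. Once the multiset identity is written out correctly, verifying the size identity $(k_1+k_2)\binom{n}{2}+2\binom{k_1+k_2}{2}\binom{n}{2}=\binom{n}{2}(k_1+k_2)^2$ closes the argument.
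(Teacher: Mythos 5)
Your proposal is correct and follows essentially the same route as the paper: invoke Theorem~\ref{tilinggroup}, set $t_i=\phi(e_i)$, and translate bijectivity of $\phi$ on the error ball into the group-ring identity (1). Your counting argument for the equivalence of (1) and (2) (grouping terms by exponent pattern and checking that the maximal total size equals $|G|$) simply makes explicit what the paper dismisses as ``clearly'' equivalent, so there is no substantive difference.
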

\begin{proof}
	By Theorem~\ref{tilinggroup}, there exists a lattice tiling of $\mathbb{Z}^n$ by $\mathcal{B}(n,2,k_1,k_2)$ if and only if there are both an abelian group $G$ of order $1+n(k_1+k_2)+\binom{n}{2}(k_1+k_2)^2$ and a homomorphism $\phi:\mathbb{Z}^n\rightarrow G$ such that the restriction of $\phi$ to $\mathcal{B}(n,2,k_1,k_2)$ is a bijection.
	Let $e_i$, $i=1,2,\dots,n$, be a fixed orthonormal basis of $\mathbb{Z}^{n}$.
	Since the homomorphism $\phi$ is determined by the values $\phi(e_i)$, $i=1,\dots,n$, then above conditions are equivalent to there exists an $n$-subset $\{t_1,t_2,\dots,t_n\}\subset G$ (let $\phi(e_i)=t_i$) such that
	\begin{align*}
		G=&\{e\}\cup\{t_{i}^{j}:\ 1\le i\le n, j\in I\}\cup\{t_i^kt_j^l:\ 1\le i<j\le n, k,l\in I\}.
	\end{align*}
	Expressed in group ring notation, this becomes:
\[G=e+\sum_{i=1}^{n}\sum_{j\in I}t_{i}^j+\sum_{1\le i<j\le n}(\sum_{k\in I}t_i^k)(\sum_{l\in I}t_j^l).\]
Clearly, (1) and (2) are equivalent. 
\end{proof}

\medskip

From now on, we assume a lattice tiling of $\mathbb{Z}^n$ by $\mathcal{B}(n,2,k_1,k_2)$,  i.e. the existence of $G$ and $T$ such that conditions in Theorem~\ref{main} hold. We further assume that $k_1+k_2+1$ is a composite number. Next, we introduce the following combinatorial counters.

\begin{definition}
For any integers $i, j, m$, we define 
\begin{align*}
	&\psi(m,i,j):=|\{t\in T: t^m\in S(i,j)\}|,\\
	&\psi(m,i):=|\{t\in T: t^m\in T^{(i)}\}|,\\
	&\psi(m,0):=|\{t\in T: t^m=e\}|.
\end{align*}
\end{definition}

\begin{lemma}\label{lem1}
For any $i,j\in [-k_2,k_1]^{*}$, 
	\begin{itemize}
\item[(a)] $\psi(i,j)=0$ if $i\neq j$,
\item [(b)] $\psi(m,i,j)=0$ if $m\in [-k_2,k_1]^{*}$,
\item [(c)] 
\[ \psi(m,0)+\sum_{i\in [-k_2,k_1]^{*}}\psi(m,i)+\sum_{i,j\in [-k_2,k_1]^{*},\ i\le j}\psi(m,i,j)=n.\]

		\end{itemize}
\end{lemma}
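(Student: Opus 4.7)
The plan is to derive all three statements directly from the disjoint union decomposition of $G$ given in Theorem~\ref{main}(2). The unifying principle is that for any $t \in T$ and any integer $m$, the element $t^m$ belongs to exactly one component of the partition $\{e\} \sqcup \bigsqcup_{i \in [-k_2,k_1]^*} T^{(i)} \sqcup \bigsqcup_{i \le j} S(i,j)$, so each counter $\psi(\cdot,\cdot)$ or $\psi(\cdot,\cdot,\cdot)$ simply tallies the contributions of elements of $T$ into one specific component. Accordingly, I would prove the three parts in parallel by locating $t^m$ within the partition and invoking disjointness.

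For part (a), I would use that for every $t \in T$ and every $i \in [-k_2,k_1]^*$, the element $t^i$ lies in $T^{(i)}$ by definition. Since Theorem~\ref{main}(2) forces $T^{(i)} \cap T^{(j)} = \emptyset$ whenever $i \neq j$ (both in $[-k_2,k_1]^*$), no $t \in T$ can satisfy $t^i \in T^{(j)}$, hence $\psi(i,j)=0$. For part (b), the reasoning is essentially the same: if $m \in [-k_2,k_1]^*$, then $t^m \in T^{(m)}$ for every $t \in T$, and disjointness now gives $T^{(m)} \cap S(i,j) = \emptyset$, so no $t$ contributes to $\psi(m,i,j)$.

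For part (c), I would partition $T$ according to which component of the decomposition contains $t^m$. Each $t \in T$ contributes to exactly one term on the left-hand side: to $\psi(m,0)$ if $t^m = e$; to the unique $\psi(m,i)$ with $t^m \in T^{(i)}$; or to the unique $\psi(m,i,j)$ (with $i \le j$) for which $t^m \in S(i,j)$. Summing the contributions over $t \in T$ yields $|T|=n$. I do not expect a genuine obstacle here, since the lemma is essentially a bookkeeping consolidation of Theorem~\ref{main}(2); the only minor care needed is to respect the convention $S(i,j) = S(j,i)$ by restricting to $i \le j$ in the third sum, so that each off-diagonal $S(i,j)$ is counted exactly once.
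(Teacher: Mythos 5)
Your proposal is correct and takes essentially the same route as the paper: the paper's proof simply states that (a), (b), (c) follow from the disjoint decomposition in Theorem~\ref{main}(2), and your argument spells out exactly that bookkeeping (locating $t^m$ in the unique part of the partition and counting over $t\in T$).
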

\begin{proof} (a), (b), (c) follow easily from Theorem~\ref{main} (2).  \end{proof}

\medskip

Our next results can be applied to compute number of elements in the intersections between $S(i,j)$ and $T^{(\ell)}$. 

\begin{lemma}\label{lem2}
\begin{itemize}
\item [(a)] $|T^{(i)}|=n$ for any $i\in[-k_1-k_2, k_1+k_2]^{*}.$
\item [(b)] If $-k_2\leq i\leq k_1+k_2$ and $j\in [-k_2, k_1]^*$, then for any $g, h\in T$, 
$g^i=h^j$ implies $g=h$. 
\item [(c)] For any $i\in[-k_1-k_2, 0]$,  the sets 
\[ T^{(i)}, \ldots, T^{(i+k_1+k_2)}\]
are disjoint. In particular, $\psi(\ell, i)=0$ if $0<\ell-i\leq k_1+k_2$. 
\item [(d)] For any $\ell \in [k_1+1, 2k_1]$ and $-k_1-k_2\leq i \leq 0$, 
\[ \sum_{j=i}^{i+k_1+k_2} \psi(\ell, j)\leq n.\]
 \end{itemize}
\end{lemma}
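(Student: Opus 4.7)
The plan is to prove parts $(a)$--$(d)$ in sequence, each building on the disjoint union in Theorem~\ref{main}(2).

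For $(a)$, we begin with a global counting argument: summing the trivial bounds $|T^{(i)}|\le n$, $|S(i,j)|\le n(n-1)$ for $i\ne j$, and $|S(i,i)|\le\binom{n}{2}$ over the index set of the decomposition yields exactly $|G|=1+n(k_1+k_2)+\binom{n}{2}(k_1+k_2)^2$. Hence every bound is attained: $|T^{(i)}|=n$ for $i\in[-k_2,k_1]^*$, and the evaluation map $(g,h)\mapsto g^i h^j$ on ordered pairs with $g\ne h$ is a bijection onto $S(i,j)$. To extend $|T^{(i)}|=n$ to $i\in[k_1+1,k_1+k_2]$, we write $i=k_1+r$ with $r\in[1,k_2]$; if $g^i=h^i$ with $g\ne h$, a direct rearrangement gives $g^{k_1}h^{-r}=h^{k_1}g^{-r}$, showing that the distinct ordered pairs $(g,h)$ and $(h,g)$ have the same image in $S(k_1,-r)$, contradicting bijectivity. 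The symmetric range $i\in[-k_1-k_2,-k_2-1]$ follows by inversion.

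For $(b)$, we split on $i$. If $i=0$ then $h^j=e$ contradicts $\{e\}\cap T^{(j)}=\emptyset$. If $i\in[-k_2,k_1]^*$ then $g^i\in T^{(i)}$, $h^j\in T^{(j)}$, and disjointness of the union forces $i=j$, after which $(a)$ gives $g=h$. If $i\in[k_1+1,k_1+k_2]$, we rewrite $g^{k_1+r}=h^j$ as $g^{k_1}=h^jg^{-r}$. When $j\ne-r$, the right side lies in $S(j,-r)$, disjoint from $T^{(k_1)}$ on the left; when $j=-r$, a further multiplication gives $g^{k_1}h^r=g^{-r}$, with the two sides in the disjoint $S(k_1,r)$ and $T^{(-r)}$.

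For $(c)$, suppose $g^{\ell_1}=h^{\ell_2}$ with $i\le\ell_1<\ell_2\le i+k_1+k_2$ and $i\in[-k_1-k_2,0]$. The case $g=h$ reduces to $g^{\ell_2-\ell_1}=e$ with $\ell_2-\ell_1\in[1,k_1+k_2]$, contradicted exactly as in the extension argument for $(a)$. For $g\ne h$, we invoke $(b)$ in one of its four symmetric forms obtained by swapping $(\ell_1,\ell_2)$ and/or inverting signs; a case-check, using the width constraint $\ell_2-\ell_1\le k_1+k_2$, leaves two residual regimes: (i) both $\ell_1,\ell_2\in[k_1+1,k_1+k_2]$ (or its inversion-image), and (ii) $\ell_1\in[-k_1,-k_2-1]$ with $\ell_2\in[k_2+1,k_1-1]$ (nonempty only when $k_1\ge k_2+2$). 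In regime (i), writing $\ell_j=k_1+r_j$ with $r_1<r_2$ and multiplying by $g^{-r_1}h^{-r_2}$ produces $g^{k_1}h^{-r_2}=h^{k_1}g^{-r_1}$, whose two sides lie in the distinct $S$-sets $S(k_1,-r_2)$ and $S(k_1,-r_1)$. In regime (ii), multiplying by $g^{k_1}h^{k_1-\ell_2}$ sends the right side to $g^{k_1}h^{k_1}\in S(k_1,k_1)$, while the left side $g^{\ell_1+k_1}h^{k_1-\ell_2}$ lies in a distinct component of the decomposition (namely one of $S(\ell_1+k_1,k_1-\ell_2)$, $S(k_1-\ell_2,k_1-\ell_2)$, or $T^{(k_1-\ell_2)}$, depending on whether $\ell_1\notin\{-\ell_2,-k_1\}$, $\ell_1=-\ell_2$, or $\ell_1=-k_1$). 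Part $(d)$ then follows at once: by $(c)$, each $t\in T$ contributes to at most one term in $\sum_{j=i}^{i+k_1+k_2}\psi(\ell,j)$, so the sum is bounded by $|T|=n$.

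The main obstacle is pinning down the residual regimes in $(c)$ and identifying the correct multiplicative shift for each: in both regimes, the shift must separate the two sides of $g^{\ell_1}=h^{\ell_2}$ into distinct components of the disjoint union.
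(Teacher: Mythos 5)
Your proposal is correct in substance and runs on the same engine as the paper's proof: the multiplicity-one decomposition of Theorem~\ref{main}(2), exploited by rearranging a hypothetical coincidence so that its two sides land in distinct components of the disjoint union. Within that common framework your execution differs in two places. For (a) you extract $|T^{(i)}|=n$ and the injectivity of $(g,h)\mapsto g^ih^j$ (for $i\ne j$) from the tight count $1+n(k_1+k_2)+\binom{n}{2}(k_1+k_2)^2=|G|$, and then kill $g^{k_1+r}=h^{k_1+r}$ via an ordered-pair collision in $S(k_1,-r)$; the paper simply reads off $|T^{(i)}|=|T^{(-i)}|=n$ for $i\in[1,k_1]$ and is silent on the range $[k_1+1,k_1+k_2]$, so your treatment is the more complete one. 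For (c) the paper reduces by inversion to pairs whose smaller exponent is at least $-k_2$ and then runs one chain of manipulations, whereas you enumerate the pairs not reachable from (b) and dispatch two residual regimes with explicit multiplicative shifts; notably your regime (ii) (e.g.\ $(\ell_1,\ell_2)=(-2,2)$ when $(k_1,k_2)=(3,1)$) is precisely the configuration that the paper's inversion reduction does not reach, so again your case analysis is the more careful one. Part (d) is handled identically in both.

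Two small slips, both harmless. First, your enumeration of the residual regimes in the $g\ne h$ branch of (c) omits the pairs in which one exponent is $0$ and the other has absolute value in $[k_1+1,k_1+k_2]$ (i.e.\ the common element is $e\in T^{(m)}$ with $|m|>k_1$); since the equation then does not involve $g$ at all, it is literally the equation $h^{m}=e$ that you already treat in the $g=h$ branch, so only the bookkeeping needs adjusting. Second, in that $g=h$ branch the phrase ``exactly as in the extension argument for (a)'' is not quite right, because that argument needs two distinct elements of $T$ to produce a collision in $S(k_1,-r)$; for $g^{k_1+r}=e$ the correct one-liner is $g^{k_1}=g^{-r}$, contradicting the disjointness of $T^{(k_1)}$ and $T^{(-r)}$ guaranteed by Theorem~\ref{main}(2) (and for exponents $m\le k_1$ one simply uses $e\notin T^{(m)}$). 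With these two lines inserted, your proof is complete.
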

\begin{proof}
By Theorem~\ref{main} (2), $|T^{(i)}|=|T^{(-i)}|=n$ for all $i\in[1, k_1].$ This proves (a). 

Suppose $g^i=h^j$ for distinct $g, h\in T$ with $j\in [-k_2, k_1]$. 
Clearly, $g=h$ if $i\in [-k_2, k_1]$ by (a) as $i$ must be $j$. 
In case $k_1+1\leq i \leq k_1+k_2$, we then have $g^{k_1}=h^j g^{-(i-k_1)}\in S(-(j-k_1), j)$. Note that $-(i-k_1)\in [-k_2, k_1]^*$. This is impossible and thus $g=h$. This proves (b). 

For (c), note that  $T^{(i)}\cap T^{(j)}=\emptyset $ if and only if $T^{(-i)}\cap T^{(-j)}=\emptyset $. Therefore, we need only to show (b) holds in case $0\geq i\geq -k_2$. 
It suffices to show that 
$T^{(\ell)}\cap T^{(j)}=\emptyset $ if $j>\ell \geq -k_2$ and  $j-\ell\leq k_1+k_2$. Assume
otherwise,  
$g^\ell=h^j$ for distinct $g, h\in T$. 
In view of Theorem~\ref{main} (2), we may also assume $j\geq k_1+1$. 
But then, we have 
$g^{\ell} h^{(-k_2)}=h^{(j-k_2)}$. Since $g\neq h$ and $k_1\geq j-k_2$, this is possible only if $\ell\geq k_1+1$. But then we have $g^{\ell-k_2} h^{(-k_2)}=h^{(j-k_2)}g^{(k_2)}$. Hence, $g^{\ell-k_2} h^{(-k_2)}\in S(-k_2, \ell-k_2)\cap S(-k_2, j-k_2)$. This is impossible as $j-k_2\neq \ell-k_2$.  

(d) follows easily from (c). 

 \end{proof}

\medskip

Using Lemma \ref{lem2}, we may then extend Lemma \ref{lem1} (c).

\begin{lemma}\label{lem6}
 If $m\in[k_1+1,k_1+k_2], i\in [-k_2,k_2]^{*}, j\in [-k_2,k_1]^{*}$ and both 
$m-i,m-j\le k_1+k_2$, then $\psi(m,i,j)=0$.
\end{lemma}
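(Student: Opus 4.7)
The plan is to argue by contradiction: suppose some $t \in T$ satisfies $t^m = g^i h^j$ for distinct $g, h \in T$. If $t = g$, the equation reduces to $t^{m-i} = h^j$; the hypothesis $m - i \le k_1 + k_2$ combined with $m - i \ge m - k_2 \ge 1$ places $m - i$ in the admissible range $[-k_2, k_1 + k_2]$ for Lemma~\ref{lem2}(b), while $j \in [-k_2, k_1]^{*}$ is the other hypothesis, so Lemma~\ref{lem2}(b) forces $t = h$, contradicting $g \ne h$. The case $t = h$ is symmetric, using $m - j \le k_1 + k_2$.

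The substantive case is $t \notin \{g, h\}$. I would multiply both sides of $t^m = g^i h^j$ by $t^{k_1 - m}$ (with $k_1 - m \in [-k_2, -1]$) and rearrange to
\[
t^{k_1} g^{-i} = h^j t^{k_1 - m}.
\]
Since $t \ne g$ with $-i, k_1 \in [-k_2, k_1]^{*}$, the left side lies in $S(-i, k_1)$; since $h \ne t$ with $j, k_1 - m \in [-k_2, k_1]^{*}$, the right side lies in $S(k_1 - m, j)$. By the disjoint-union decomposition of Theorem~\ref{main}(2), these two $S$-sets must coincide, so the multiset identity $\{-i, k_1\} = \{k_1 - m, j\}$ holds; since $m \ne 0$, this forces $i = m - k_1$ and $j = k_1$. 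Running the same argument with the multiplier $t^{-k_2}$ in place of $t^{k_1 - m}$ (valid because $m - k_2 \in [2, k_1] \subset [-k_2, k_1]^{*}$) applied to the rearrangement $t^{m - k_2} g^{-i} = h^j t^{-k_2}$ yields the companion constraint $i = k_2$ and $j = m - k_2$. The two forced pairs agree only when $m = k_1 + k_2$, so for every $m \in [k_1 + 1, k_1 + k_2 - 1]$ the substantive case is already impossible and the lemma follows in this range.

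The main obstacle is the residual boundary case $m = k_1 + k_2$ with $(i, j) = (k_2, k_1)$. Here the equation $t^{k_1 + k_2} = g^{k_2} h^{k_1}$ rearranges to the identity $(t/h)^{k_1} = (g/t)^{k_2}$, and both sides already lie in $S(-k_2, k_2)$, producing no immediate conflict with the disjoint-union structure. I anticipate closing this last case by a further double-shift rearrangement in the spirit of the proof of Lemma~\ref{lem2}(c) — for instance, shifting once by $g^{-k_2}$ to obtain $t^{k_1} \cdot (t/g)^{k_2} = h^{k_1}$ and then by a suitable power of $h$, so that some derived element is forced simultaneously into two incompatible pieces of the disjoint union — but pinning down the exact combination of shifts (and verifying the required disjointness given the constraint $k_1 > k_2$) is where I expect the hardest work.
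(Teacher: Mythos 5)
Your handling of the cases $t=g$ and $t=h$ via Lemma~\ref{lem2}(b) is correct (and slightly cleaner than the paper's case analysis), and your two rearrangements in the case $t\notin\{g,h\}$ are essentially the paper's own manipulation. But the proof is not complete: the boundary case $m=k_1+k_2$ with $(i,j)=(k_2,k_1)$ satisfies every hypothesis of the lemma as soon as $k_2\ge1$ (and when $k_2=1$ it is the \emph{only} nontrivial value of $m$), and you leave it open, offering only the hope of a further ``double-shift''. Incidentally, in that discussion the element $(t/h)^{k_1}=t^{k_1}h^{-k_1}$ does not lie in $S(-k_2,k_2)$, nor in any piece of the decomposition, since $-k_1\notin[-k_2,k_1]^*$; so the difficulty is real under your method. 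This is a genuine gap.

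The reason you get stuck is that you only use the pairwise disjointness of distinct pieces $S(a,b)$ from Theorem~\ref{main}(2), which can never yield more than index constraints. The tiling hypothesis gives strictly more: in Theorem~\ref{main}(1) the group-ring identity has every coefficient equal to $1$, and since $|G|=1+n(k_1+k_2)+\binom{n}{2}(k_1+k_2)^2$ equals the number of formal terms, every element of $G$ has a \emph{unique} representation as $e$, as $t_a^u$, or as $t_a^ut_b^v$ with $t_a\ne t_b$ and $u,v\in[-k_2,k_1]^*$. Apply this to your second rearrangement $t^{m-k_2}g^{-i}=h^{j}t^{-k_2}$, where $t,g,h$ are pairwise distinct and all four exponents lie in $[-k_2,k_1]^*$: the two sides are representations of the same element over the distinct $2$-subsets $\{t,g\}$ and $\{t,h\}$ of $T$, which uniqueness forbids. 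This kills the whole case $t\notin\{g,h\}$ at once, with no residual boundary case; it is exactly the paper's step in which $g_1$ is forced to equal $g$ or $g_2$, yielding the contradiction. In particular your unresolved instance $t^{k_1}g^{-k_2}=h^{k_1}t^{-k_2}$ dies immediately, since the exponent carried by $t$ would have to satisfy $k_1=-k_2$.
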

\begin{proof}
Suppose there exist $g,g_1,g_2\in T$ with $g_1\ne g_2$ such that
$g^m=g_1^ig_2^j.$ 

Suppose $g=g_1$. Then 
 $g\neq g_2$ and $g^{m-k_2-i}=g^{-k_2}g_2^{j}$. This is impossible as $-k_2\leq m-k_2-i\leq k_1$.  Suppose $g=g_2$. 
As argued before, we are done if $j\in [-k_2, k_2]^*$. So we may assume $j\geq k_2+1$. 
Then, $g^{m-j}=g_1^{i}$. Since $-k_2\leq m-j \leq k_1$, it follows from Lemma \ref{lem2} (a) that $m-j=i$ and $g=g_1$.  This is impossible.

Finally, we consider the case $g\notin \{g_1, g_2\}$. 
We then have $g^{m-k_2}g_1^{-i}=g^{-k_2}g_2^{j}\in S(-i, m-k_2)\cap S(-k_2, j)$. 
Note that $-i, m-k_2, -k_2, j\in [-k_2, k_1]^*$. Therefore, 
$S(-i, m-k_2)=S(-k_2, j)$ and $g_1=g$ or $g_2$, which is impossible. 
\end{proof}

\medskip

In later section,  we need to determine the size of $S(i,j)$ even if $j\notin [-k_2, k_1]^*$. 
Before that, we need some results concerning the group $G$. 

Let $p$ be a prime. Denote by $S_p(G)$ the Sylow $p$-subgroup of $G$. We then establish the following lemma.
\begin{lemma}\label{lem13}
	Let $p\in[2,k_1]$ be a prime. 
	 The rank of $S_p(G)$ satisfies the following bound:
	\[\text{rank}(S_p(G))\le\begin{cases}3,&\textup{ if } k_1+k_2=3;\\
		2,&\textup{ if }k_1+k_2\ge5.\end{cases}\]
\end{lemma}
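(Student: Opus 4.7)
My strategy is to apply characters of $G$ to the group-ring identity of Theorem~\ref{main}(1), extract an algebraic constraint on $\chi(t_1),\ldots,\chi(t_n)$, and then translate it into a bound on $r:=\mathrm{rank}(S_p(G))$ via Fourier analysis on the quotient $V:=G/G^p\cong(\mathbb{Z}/p)^r$. For any non-trivial character $\chi\colon G\to\mathbb{C}^{\ast}$, setting $C_i(\chi):=\sum_{k\in I}\chi(t_i)^k$ and applying $\chi$ to the identity yields $0=1+\sum_i C_i+\sum_{i<j}C_iC_j$, which rearranges cleanly as
\[\sum_{i=1}^n C_i(\chi)^2=\Bigl(1+\sum_{i=1}^n C_i(\chi)\Bigr)^2+1.\]

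Assume first $p\mid k_1+k_2+1$. For $\chi$ of order $p$, each $\chi(t_i)$ is a $p$-th root of unity, and since $p\mid k_1+k_2+1$ one computes $C_i(\chi)=k_1+k_2$ when $\chi(t_i)=1$ and $C_i(\chi)=-1$ otherwise. Substituting $s(\chi):=|\{i:\chi(t_i)=1\}|$ into the displayed identity produces the quadratic
\[(k_1+k_2+1)^2 s^2-(k_1+k_2+1)(2n+k_1+k_2-3)s+(n-1)(n-2)=0,\]
so every non-trivial order-$p$ character must have $s(\chi)\in\{s_1,s_2\}$, the two roots. I would then project to $V$ via $\pi$, write $c_v:=|\{i:\pi(t_i)=v\}|$ (so $\sum_v c_v=n$), and use character orthogonality on $V$ to express $\sum_{\chi\ne 1}s(\chi)=n(p^{r-1}-1)+(p-1)p^{r-1}c_0$, together with a similar Plancherel-type expansion of $\sum_{\chi\ne 1}s(\chi)^2$ in terms of $\sum_v c_v^2$ and the counts $N_i:=|\{\chi\ne 1:s(\chi)=s_i\}|$. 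Combining these two moment identities with $N_1+N_2=p^r-1$, the Cauchy--Schwarz bound $\sum_v c_v^2\ge n^2/p^r$, and $c_v\ge 0$ yields an explicit inequality in $n,p,k_1,k_2,r$ which becomes infeasible precisely when $r\ge 4$ in the case $k_1+k_2=3$ and when $r\ge 3$ in the case $k_1+k_2\ge 5$. The one-step gap between the two thresholds traces back to the factor $(k_1+k_2-1)(k_1+k_2+1)$ in the quadratic: larger $k_1+k_2$ makes $|s_1-s_2|$ relatively smaller, which tightens the resulting Parseval inequality.

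In the residual situation $p\nmid k_1+k_2+1$ the values $C_i$ are non-real sums of $p$-th roots of unity, but the displayed identity still holds over $\mathbb{C}$; a parallel computation shows that either the character equation admits no valid integer $s$ (so $S_p(G)=\{e\}$ and $r=0$) or the same Parseval argument still applies. The hardest part will be the Parseval bookkeeping that produces the sharp threshold: the subcase $p=2$ is cleanest because $\hat{c}(\chi)=2s(\chi)-n$ is a real $\pm$-valued Fourier coefficient, whereas for $p\ge 3$ one must track characters in $(p-1)$-orbits under $\chi\mapsto\chi^j$ (corresponding to the hyperplanes of $V$), which complicates the combinatorics.
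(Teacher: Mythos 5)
Your character-theoretic route is genuinely different from the paper's (which simply observes that $e$, all $t_i^j$ with $p\mid j$, and all $t_i^jt_{i'}^{j'}$ with $p\mid j,j'$ are $p$-th powers, hence lie in the subgroup $\{g^p:g\in G\}$ of index $p^{\mathrm{rank}(S_p(G))}$, and compares cardinalities), but as written it has a genuine gap: your analysis only functions when $p\mid k_1+k_2+1$, whereas the lemma is stated -- and later invoked through Lemma~\ref{lem8}, where $p$ is the smallest prime factor of a composite $m\in[k_1+k_2+1,2k_1]$ -- for every prime $p\in[2,k_1]$. Such primes can divide $|G|$ without dividing $k_1+k_2+1$: for $k_1=8$, $k_2=0$ one has $|G|=32n^2-24n+1\equiv0\pmod 7$ whenever $n\equiv3\pmod 7$, with $7\le k_1$ and $7\nmid 9$, so the case is not vacuous. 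When $p\nmid k_1+k_2+1$ the value $C_i(\chi)$ equals $-1+\zeta^{-ak_2}(\zeta^{a(k_1+k_2+1)}-1)/(\zeta^{a}-1)$ for $\chi(t_i)=\zeta^{a}\ne1$, i.e.\ it genuinely depends on which $p$-th root of unity $\chi(t_i)$ is; the reduction of the character identity to a quadratic in the single integer $s(\chi)$ -- the engine of your whole argument -- collapses, and ``a parallel computation shows \dots the same Parseval argument still applies'' is precisely the missing idea, not a routine variant.

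Even in the case $p\mid k_1+k_2+1$ your identities (the quadratic in $s(\chi)$ and the first-moment formula) check out, but the decisive step is only asserted. The claim that the moment inequalities fail ``precisely when $r\ge4$ (resp.\ $r\ge3$)'' does not match what they actually give: since $p\le k_1$ forces $m:=k_1+k_2+1\ge 2p$ and both roots satisfy $s_{1,2}=n/m+O(\sqrt n)$, the first moment $\sum_{\chi\ne1}s(\chi)=c_0(p^r-1)+(n-c_0)(p^{r-1}-1)$ is already asymptotically infeasible for every $r\ge2$, while for the finitely many small $n$ that survive it the thresholds $3$ versus $2$ would have to come out of the (unperformed) second-moment bookkeeping together with integrality of $s_{1,2}$ and divisibility conditions on $|G|$; your heuristic attribution of the threshold to a factor $(k_1+k_2-1)(k_1+k_2+1)$ has no visible counterpart in the discriminant $m^2+4(m-1)(n-2)$. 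So the proposal could likely be salvaged into a (possibly stronger) statement for primes dividing $k_1+k_2+1$, but it does not prove the lemma as stated; the paper's counting argument covers all $p\le k_1$ and all $n$ uniformly in a few lines.
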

\begin{proof}
	Suppose that $\text{rank}(S_p(G))=r\ge1$. Then, the order of the $p$-core of $G$ is given by
	\[|G^{(p)}|=\frac{1}{p^r}|G|.\]
	Observe that
	\[\{e\}\cup \bigcup_{i\in[-k_2,k_1]^{*}, p\mid i}T^{(i)}\cup \bigcup_{i,j\in[-k_2,k_1]^{*}, i\le j, p\mid i, p\mid j}S(i,j)\subseteq G^{(p)}. \]
By computing the number of elements in the set above, we derive the following inequality:
	\begin{align*}
		&\frac{1}{p^r}|G|=\frac{1}{p^r}(\binom{n}{2}(k_1+k_2)^2+n(k_1+k_2)+1)\\
				\ge&1+(\lfloor\frac{k_1}{p}\rfloor+\lfloor\frac{k_2}{p}\rfloor)(n+\binom{n}{2})+(n^2-n)\binom{\lfloor\frac{k_1}{p}\rfloor+\lfloor\frac{k_2}{p}\rfloor}{2}.
	\end{align*}
	From this, we obtain the bound
	\[p^r\le\frac{\binom{n}{2}(k_1+k_2)^2+n(k_1+k_2)+1}{\binom{n}{2}(\lfloor\frac{k_1}{p}\rfloor+\lfloor\frac{k_2}{p}\rfloor)^2+n(\lfloor\frac{k_1}{p}\rfloor+\lfloor\frac{k_2}{p}\rfloor)+1}.\]
Write $k_1+k_2= p y +x$ where $y= (\lfloor\frac{k_1}{p}\rfloor+\lfloor\frac{k_2}{p}\rfloor)$. Clearly, $0\leq x \leq 2p-2$ and $y\geq 1$ as $p\leq k_1$. 
In case $p\geq 7$ or $y\geq 2$,  then it is easy to check that 
$(k_1+k_2)^2+(k_1+k_2)< p^3 y^2$. 

Next, we consider the case $y=1$ and $p\leq5$.
If $p=2$ and $y=1$, $k_1=2$ and $k_1=1$, then $3^2+3< 2^4$. Therefore, $r\leq 3$.
For $p=3$ and $y=1$, $3\leq k_1\leq 5$ and $0\leq k_2\leq 2$. Note that  $k_1+k_2\not\equiv 1 \bmod 3$  as otherwise, $3\nmid |G|$. Therefore $k_1+k_2=4$. Clearly, $(k_1+k_2)^2+(k_1+k_2) < 3^3$. Thus, $r\leq 2$. 
In case $p=5$, 
$5\leq k_1\leq 9$ and $0\leq k_2\leq 4$. However, $5$ divides $|G|$ only when $k_1+k_2\equiv 4 \bmod 5$. Therefore, $k_1+k_2=9$ and again, we see that
$9^2+9<5^3$. Thus $r\leq 2$. \end{proof}

\medskip

\begin{lemma}\label{lem8}
Let $m$ be a composite integer in $[k_1+k_2+1,2k_1]$ and $m=p\ell $ where $p$ is the smallest prime divisor of $m$. 
\begin{itemize}
\item [(a)] Suppose $C=\max_{g\in T}|\{h\in T: g^{k_1+k_2+1}=h^{k_1+k_2+1}\}$, then
		\[C \le\begin{cases}3\sqrt{p},&\textup{ if } k_1+k_2\ge5 \text{ and }p\ge3;\\
			3,&\textup{ if } k_1+k_2\ge5 \text{ and }p=2;\\
			4,&\textup{ if } k_1+k_2=3.\end{cases}\]
\item [(b)]  For any composite integer $m\in[k_1+k_2+1,2k_1]$,   
\[\psi(m,0)\le\begin{cases}2,&\textup{ if } m=k_1+k_2+1\ge6;\\
			3,&\textup{ if }k_1+k_2=3.\end{cases}\]
\end{itemize}
\end{lemma}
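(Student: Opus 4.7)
\textbf{Proof plan for Lemma~\ref{lem8}.} Both parts share the strategy of pushing the problem into the $p$-torsion subgroup $G[p] = \{x \in G : x^p = e\}$, whose size is bounded by Lemma~\ref{lem13}.

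\emph{Setup.} Since $m$ is composite with smallest prime factor $p$, one has $p \le \sqrt{m} \le k_1$ and $\ell = m/p \in [p, k_1] \subset [-k_1-k_2, k_1+k_2]^*$. By Lemma~\ref{lem2}(a), $h \mapsto h^\ell$ is a bijection on $T$; by Lemma~\ref{lem13}, $|G[p]| \le p^r$ where $r \le 2$ for $k_1+k_2 \ge 5$ and $r \le 3$ for $k_1+k_2 = 3$. For part (a), take $m = k_1+k_2+1$ and fix $g \in T$; the identity $(h^\ell g^{-\ell})^p = h^m g^{-m} = e$ forces the $C$ distinct images $y_i := h_i^\ell$ to lie in the single coset $y_1 \cdot G[p]$. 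For part (b), choose distinct $t_i \in T$ with $t_i^m = e$; then $y_i := t_i^\ell$ satisfies $y_i^p = e$, so the $y_i$ lie inside $G[p]$ itself.

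\emph{Counting.} Set $A = \lfloor k_1/\ell \rfloor$. For each $a \in [1,A]$ the powers $y_i^a \in T^{(a\ell)}$ are $C$ distinct elements of $y_1^a G[p]$; for $1 \le a \le b \le A$, the mixed products $y_i^a y_j^b$ ($i \ne j$) lie in $S(a\ell, b\ell)$ and in $y_1^{a+b} G[p]$, yielding $C(C-1)$ or $\binom{C}{2}$ distinct elements per pair $(a,b)$. Tiling disjointness makes all these elements pairwise distinct. In case (a), they are spread over the $2A$ cosets $y_1^c G[p]$ for $c \in [1,2A]$, each of size $\le p^r$, giving
\[
AC + \binom{C}{2}A^2 \;\le\; 2A\, p^r,
\]
equivalently $C + \binom{C}{2}A \le 2p^r$. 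Since $A \ge \lfloor p/2 \rfloor$ (from $m \le 2k_1$), the stated bound $C \le 3\sqrt{p}$ drops out for $p \ge 9$; the small primes $p \in \{2,3,5,7\}$ and the $k_1+k_2 = 3$ subcase are resolved by sharpening to the per-coset bounds $\binom{C+1}{2} \le p^r$ (at $c=2$) or $C(C-1) \le p^r$ (at the middle coset). In case (b), since $y_1 \in G[p]$ every coset equals $G[p]$, and including $e$ yields
\[
1 + sA + \binom{s}{2}A^2 \;\le\; p^r,
\]
which already fails for $s = 3$ once $A \ge 2$.

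\textbf{Main obstacle.} The delicate step is the edge regime $A = 1$ ($p = 2$, or $\ell > k_1/2$, i.e.\ $k_1 - k_2$ small), where $T^{(a\ell)}$ is absent from the tiling for $a \ge 2$ and the pooled inequality loses teeth. There one has to locate $y_i^2 = t_i^{2\ell}$ in the tiling explicitly: the identity $t_i^m = e$ allows rewriting $t_i^{2\ell} = t_i^{2\ell - m}$ and placing it in a $T^{(j)}$ or $S(j,j')$ with $j, j' \in [-k_2, k_1]^*$. This contributes an extra batch of $s$ (respectively $C$) disjoint elements inside $G[p]$ (respectively $y_1^2 G[p]$), supplying the surplus needed to push the count past $p^r$ and secure both claimed bounds. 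The $k_1+k_2 = 3$ regime, where Lemma~\ref{lem13} permits rank $3$, is settled by the same method together with a small-parameter case check.
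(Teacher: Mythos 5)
Your overall strategy coincides with the paper's: push everything into the maximal elementary abelian $p$-subgroup (your $G[p]$, the paper's $H$), invoke Lemma~\ref{lem13} to get $|G[p]|\le p^r$ with $r\le 2$ (resp.\ $3$), split the relevant $\ell$-th powers into $G[p]$-cosets, and count tiling-disjoint singletons and cross-products inside cosets. The genuine gap is that you use only the positive exponents $a\in[1,A]$, $A=\lfloor k_1/\ell\rfloor$, whereas the paper uses the full range $[-\lfloor k_2/\ell\rfloor,\lfloor k_1/\ell\rfloor]^*$; when $m=k_1+k_2+1=p\ell$ this range has exactly $p-1$ elements, since $\lfloor k_1/\ell\rfloor+\lfloor k_2/\ell\rfloor=p-1$, and dropping the negative exponents loses precisely the leverage needed for the stated bounds. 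Concretely, for (b) your inequality $1+sA+\binom{s}{2}A^2\le p^r$ does \emph{not} ``fail for $s=3$ once $A\ge 2$'': take $k_1=20$, $k_2=14$, so $m=k_1+k_2+1=35$, $p=5$, $\ell=7$, $A=2$, $r\le 2$; then $1+3A+3A^2=19\le 25$, so $s=3$ survives and you only get $\psi(35,0)\le 3$, not the claimed $\le 2$. The paper's count over all $p-1=4$ exponents gives $1+4\cdot 3+\frac{4^2}{2}\binom{3}{2}=37>25$ and does exclude $s=3$. Your proposed rescue does not repair this: it is invoked only for $A=1$, and the rewrite $t_i^{2\ell}=t_i^{2\ell-m}$ has exponent $(2-p)\ell$, which in general lies outside $[-k_2,k_1]$ (in the example, $-21\notin[-14,20]$), so that element need not belong to any $T^{(j)}$ or $S(j,j')$ of the decomposition.

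Part (a) has a milder instance of the same issue: with $A$ as small as $(p-1)/2$, the pooled bound $C+\binom{C}{2}A\le 2p^{2}$ does not yield $C\le 3\sqrt p$ at $p=11$ (with $A=5$, $C=10$ gives $10+5\cdot 45=235\le 242$, yet $3\sqrt{11}<10$), so your claim that pooling suffices for all $p\ge 9$ is not quite right; this one is repairable by the fixed-sum per-coset count you already mention. The cleaner fix for both parts --- and the paper's actual argument --- is to use negative as well as positive exponents, so that for $m=k_1+k_2+1$ there are $p-1$ usable exponents; the single-coset count with fixed exponent sum then gives roughly $\frac{p-2}{2}C(C-1)\le p^2$, hence $C\le 3\sqrt p$, and the count inside $H$ itself gives $1+(p-1)s+\frac{(p-1)^2}{2}\binom{s}{2}\le p^2$, hence $s\le 2$, with the separate small computations for $p=2$ and for $k_1+k_2=3$ (rank up to $3$) as in the paper.
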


\begin{proof}
Define $H$ as the maximal elementary $p$-subgroup of $G$.  
Write $T^{(\ell)}=\sum_{i=1}^{r}g_iA_i$ where each $Hg_i$'s are distinct $H$-cosets and we may assume $g_1=e$.   
Note that as $|T^{(\ell)}|=n$, then $\psi(m,0)=|A_1|$. Let 
\[ x=-\lfloor\frac{k_2}{\ell }\rfloor \mbox{ and } y=\lfloor\frac{k_1}{\ell}\rfloor. \]
We then consider 
\[ \{e\} \cup \bigcup_{j \in [x, y]^*}	g_i^j A_i^{( j)} \cup \bigcup_{ \alpha\neq \beta \in [x, y]^*} g_i^{(\alpha +\beta)} [A_i^{( \alpha)} A_i^{(\beta)} -A_i^{(\alpha+\beta)}] .\]

For (a), we assume $m=k_1+k_2+1$, then $C=max \{ |A_i|: i=1, \ldots r\}$. As $|m-(|x|+y)\ell|<2(\ell-1)$, we conclude that $|x|+y=p-1$. Note that $x$ might be $0$. In any case, there are $p-1$ nonzero integers in $[x, y]^*$. 
Set $\alpha+\beta=y+x$, we see that 
\[ \bigcup_{ \alpha \in [x, y]^*} g_i^{y+x} [A_i^{(\alpha)} A_i^{(p-1-\alpha)} -A_i^{(y+x)}]\subset g_i^{(y+x)} H. \]
Therefore, we obtain the inequality  $\binom{p-1}{2}|A_i|(|A_i|-1)\le p^2$ if $p\neq 2$. 
If $p\geq 3$ and $k_1+k_2+1\geq 5$ we then obtain 
\[|A_i| \le \frac{(p-2)+\sqrt{(p-2)(8p^2+p-2)}}{2(p-2)}\le3\sqrt{p}.\]
In case $p=2$ and $k_1+k_2\geq 5$, we then obtain $|A_i|(|A_i|-1)\leq 4$. Thus $|A_i|\leq 3$.
Whereas if $k_1+k_2=3$, we obtain $|A_i|(|A_i|-1)\leq 8$ instead. In that case, $|A_i|\leq 4$. Recall that  $C=\max \{ |A_i|: i=1, \ldots r\}$. This proves (a). 

Next, we find a bound for $|A_1|$. We consider 
\[ \{e\} \cup \bigcup_{j \in [x, y]^*}	g_1^{j} A_1^{(j)} \cup \bigcup_{ \alpha\neq \beta \in [x, y]^*} g_1^{\alpha +\beta} [A_1^{(\alpha)} A_1^{(\beta)} -A_1^{(\alpha+\beta)}] \subset H .\]
As the set on the left contains 
\[ 1+(|x|+y)|A_1|+\frac{(|x|+y)^2}{2} \binom{|A_1|}{2}\]
elements.
In case $p$ is odd or $k_1+k_2+1\geq 5$, 
\[ 1+(|x|+y)|A_1|+\frac{(|x|+y)^2}{2} \binom{|A_1|}{2}\leq p^2.\]
If $m=k_1+k_2+1$, then $|x|+y=p-1$. Thus, $|A_1|\leq 2$ and $\psi(k_1+k_2+1, 0)\leq 2$. In case $m> k_1+k_2+1$, then it is clear that $|x|+y\geq (p-1)/2$ and thus $|A_1|\leq 3$. 

Finally, if $p$ is even and $k_1+k_2=3$, then we consider only the set
$\{e\} \cup A_1\cup  (A_1^2-A_1^{(2)})$. 
We then obtain $1+|A_1|+\binom{|A_1|}{2}\le8$ and thus $\psi(m,0)=|A_1|\le3$.
\end{proof}

\begin{lemma}\label{lem4}
Suppose $i\in[-k_1,k_2]^{*}$ and $j\in[-k_2,k_1]^{*}$.  
Then we have 
\begin{itemize}
\item [(a)] $e\notin S(i, j)$,
\item [(b)] $|S(i,j)|=n^2-n-\psi(j-i,-i,j)$ if $0<j-i\le k_1+k_2$ or $\gcd(j-i, |G|)=1$. 
\item [(c)] $|S(i,j)|\ge n^2-n-\frac{(C-1)n}{2}-\psi(k_1+k_2+1,-i,j)$ if $j-i=k_1+k_2+1$.
\end{itemize}
Here $C$ is the one defined in Lemma \ref{lem8}. 
\end{lemma}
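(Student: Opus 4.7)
The plan is to read $|S(i,j)|$ as the support size of the multiset $M(i,j):=T^{(i)}T^{(j)}-T^{(i+j)}=\sum_{g\ne h}g^ih^j$ in $\mathbb{Z}[G]$, whose total mass is $n(n-1)$. Then $n(n-1)-|S(i,j)|$ counts the ``extra copies'' in $M(i,j)$, and the goal is to match these with $\psi(j-i,-i,j)$ (and, in~(c), with pairs lying in common fibers of $g\mapsto g^{k_1+k_2+1}$). Throughout I will use that Lemma~\ref{lem2}(a) gives $|T^{(i)}|=|T^{(j)}|=n$, so in any coincidence $g_1^ih_1^j=g_2^ih_2^j$ with $(g_1,h_1)\ne(g_2,h_2)$ one must have $g_1\ne g_2$ and $h_1\ne h_2$.

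For~(a), writing $e=g^ih^j$ with $g\ne h$ gives $h^j=g^{-i}$ where $-i,j\in[-k_2,k_1]^*$. Either $-i\ne j$, in which case Theorem~\ref{main}(2) makes $T^{(-i)}$ and $T^{(j)}$ disjoint subsets of $G\setminus\{e\}$ and contradicts $h^j=g^{-i}$; or $-i=j$, in which case $|T^{(j)}|=n$ forces $g=h$, again a contradiction. For~(b) I classify coincidences by shared entries. Under either hypothesis on $j-i$ the map $g\mapsto g^{j-i}$ is injective on $T$ (by Lemma~\ref{lem2}(a) in the first case, as a bijection of $G$ in the second), which excludes the ``full-swap'' pattern $g_1=h_2$, $g_2=h_1$ (it would force $g_1^{j-i}=g_2^{j-i}$). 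For the ``single shared entry'' pattern $g_1=h_2=t$, the equation rearranges to $t^{j-i}=g_2^{-i}h_1^j\in S(-i,j)$, exhibiting $t$ as an element counted by $\psi(j-i,-i,j)$. The two remaining pieces of work are: (i)~to rule out the ``all four distinct'' pattern by writing $g_1^ih_2^{-j}=g_2^ih_1^{-j}$ and combining the disjointness of Theorem~\ref{main}(2) with the injectivity of $g\mapsto g^{j-i}$; and (ii)~to verify that the representation $t^{j-i}=g^{-i}h^j$ is uniquely determined by $t$, so that each $t$ contributes exactly one extra copy. Together these give $n(n-1)-|S(i,j)|=\psi(j-i,-i,j)$.

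For~(c) the hypothesis $j-i=k_1+k_2+1$ allows the injectivity of $g\mapsto g^{j-i}$ on $T$ to fail, and the full-swap coincidences return. A full-swap coincidence is precisely an unordered pair $\{a,b\}\subset T$ with $a^{k_1+k_2+1}=b^{k_1+k_2+1}$, so by the definition of $C$ in Lemma~\ref{lem8}(a) the total count is at most $\sum_{\mathrm{fibers}}\binom{f}{2}\le\tfrac{(C-1)n}{2}$. The single-shared-entry coincidences still contribute at most $\psi(k_1+k_2+1,-i,j)$ extra copies, while the all-four-distinct case is excluded by the same argument as in~(b). Summing yields the lower bound on $|S(i,j)|$. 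The main obstacle throughout is the exclusion of the all-four-distinct coincidence, for which I expect to combine the disjointness in Theorem~\ref{main}(2) with the exponent-range constraints encoded in Lemmas~\ref{lem2} and~\ref{lem6}.
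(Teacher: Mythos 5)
Your outline coincides with the paper's proof: it counts pairs $(g_1,h_1)\ne(g_2,h_2)$ with $g_1^ih_1^j=g_2^ih_2^j$, splits the coincidences into exactly your three patterns (single shared entry, full swap, all four distinct), treats (a) and the full-swap case of (b) as you do, and bounds the full swaps in (c) by $\binom{C}{2}\tfrac{n}{C}=\tfrac{(C-1)n}{2}$ via Lemma~\ref{lem8}(a). The gap is that the two steps you defer are the crux, and the tools you name for step (i) would not close it as stated. First, rearrange to $g_2^{-i}h_1^{j}=g_1^{-i}h_2^{j}$ rather than to $g_1^{i}h_2^{-j}=g_2^{i}h_1^{-j}$: in your form the exponents $i,-j$ lie in $[-k_1,k_2]^{*}$, a range about which Theorem~\ref{main} says nothing, whereas $-i,j\in[-k_2,k_1]^{*}$. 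Second, with all four elements distinct both sides already lie in the \emph{same} set $S(-i,j)$, so disjointness of the sets in Theorem~\ref{main}(2), even combined with injectivity of $g\mapsto g^{j-i}$, gives no contradiction. What is needed is the multiplicity-one content of the group-ring identity in Theorem~\ref{main}(1): every element of $G$ has coefficient $1$, so each element of $S(-i,j)$ has a unique representation $g^{-i}h^{j}$ with $g\ne h$ in $T$ (up to swapping the factors when $-i=j$). Uniqueness forces $(g_2,h_1)=(g_1,h_2)$ (the swapped alternative would give $g_2=h_2$), contradicting distinctness; the same fact settles your step (ii), the uniqueness of $(a,b)$ in $t^{j-i}=a^{-i}b^{j}$. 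This is precisely how the paper disposes of its Case (1).

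One further caution on the bookkeeping in (b): your argument produces a bijection between unordered colliding pairs and the elements $t$ counted by $\psi(j-i,-i,j)$, which immediately yields $n(n-1)-|S(i,j)|\le\psi(j-i,-i,j)$, i.e.\ the bound $|S(i,j)|\ge n^2-n-\psi(j-i,-i,j)$ that is what all later sections actually use; the stated \emph{equality} additionally requires every fiber of $(g,h)\mapsto g^ih^j$ to have at most two preimages (three mutually colliding pairs would add $3$ to the $\psi$-count but only $2$ to the deficiency). The paper's proof makes the same silent identification, so you are not behind it here, but if you claim equality you should either rule out such triple collisions or weaken the statement to the inequality. With the multiplicity-one argument supplied, your treatment of (c) matches the paper's.
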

\begin{proof} Suppose $g^i h^j=e$ with $g\neq h$.  As $T^{(i)}\cap T^{(-j)}=\emptyset$ unless $i=-j$,  it follows that $g^i h^{-i}=e$. But then $g^i=h^i$. Since $|T^{(i)}|=n$, then $g=h$. This is impossible and (a) follows. 

To compute $|S(i,j)|$, we need to count the number of distinct pairs $(g_1, g_2), (g_3, g_4)$ with 
$g_1^ig_2^j=g_3^ig_4^j\in S(i,j)$ such that $g_1,  g_2,g_3, g_4\in T$, $g_1\ne g_2$ and $g_3\ne g_4$. Then 
\begin{equation} \label{eq1}
	g_3^{-i}g_2^j=g_1^{-i}g_4^j. 
\end{equation}

If $g_1\neq g_4$, then $g_3^{-i}g_2^j=g_1^{-i}g_4^j\in S(-i, j)$. 
Since $g_1\neq g_2$, either $g_2=g_3$ or $g_1=g_3$. 
In the first case, 
$g_2^{j-i}\in S(-i, j)$. Note that each element of the form $g^{j-i}\in T^{(j-i)}\cap S(-i, j)$ gives one set of solution. Therefore, the number of pair of solutions in Equation 
~(\ref{eq1}) is exactly $\psi(j-i,-i,j)$ and our desired result follows. 
If $g_1=g_3$, then $(g_1, g_2)=(g_3, g_4)$, which contradicts our assumption. 

In case $g_1=g_4$, we use the same argument as before if $g_2\neq g_3$. But then, the condition obtained would be the same as before. Therefore, this case does not give us more solutions.

Finally, we consider the case $g_1=g_4$ and $g_2=g_3$. 
We thus have $g_1^{j-i}=g_2^{j-i}$ with $g_1\neq g_2$. 
As $k_1+k_2 \geq j-i>0$, then it follows from Lemma \ref{lem2} that $g_1=g_2$. Again, this is impossible. If $\gcd(j-i, |G|)=1$, then it also means $g_1=g_2$. We have thus proved (b). 

For (c),  let $C$ be as defined in Lemma \ref{lem8}. Therefore, there are at most
 \[
 \binom{C}{2}\frac{n}{C}=\frac{(C-1)n}{2}
 \]
 such solutions. Thus, we conclude that  $|S(i,j)|\ge n^2-n-\frac{(C-1)n}{2}-\psi(k_1+1,-i,j).$
\end{proof}

\begin{lemma}\label{lemma-new1}
For any $i\in[-k_1,k_2]^{*}$ and $j,\ell \in[-k_2,k_1]^{*}$, 
	\[|S(i,j)\cap T^{(\ell)}|=\left\{ \begin{array}{ll}
\psi(\ell -i,j) & \mbox{ if } \ell-i \geq k_1+k_2+1, \\
0 & \mbox{ if } \ell -i \leq k_1+k_2. \end{array}\right.\]
\end{lemma}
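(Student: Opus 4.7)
The plan is to use the bijection $t \mapsto t^{\ell}$ from $T$ to $T^{(\ell)}$, which is well defined and injective because $\ell \in [-k_2, k_1]^{*}$ forces $|T^{(\ell)}|=n$ by Lemma~\ref{lem2}(a). Under this bijection, $|S(i,j)\cap T^{(\ell)}|$ equals the number of $t \in T$ for which there exist $g,h \in T$ with $g \ne h$ and $t^{\ell} = g^i h^j$, so the entire problem reduces to analysing that single equation.

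The heart of the proof is a forcing argument: whenever $t^{\ell}=g^i h^j$ with $g \ne h$, necessarily $g = t$. Suppose otherwise, i.e.\ $g \ne t$. Then $g^{-i} t^{\ell}$ has the form $g_1^{-i} g_2^{\ell}$ with two distinct elements $g_1,g_2 \in T$, so it lies in $S(-i,\ell)$. Because $i \in [-k_1,k_2]^{*}$ gives $-i \in [-k_2,k_1]^{*}$ and $\ell \in [-k_2,k_1]^{*}$, the set $S(-i,\ell)$ is one of the pieces in the disjoint decomposition of $G$ in Theorem~\ref{main}(2). But the rearranged equation $g^{-i} t^{\ell} = h^j$ also places this element in $T^{(j)}$, another distinct piece of that decomposition; disjointness yields the desired contradiction. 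Hence $g=t$, which makes $h \ne t$ automatic, and $h$ is uniquely determined by $h^j = t^{\ell-i}$ since $|T^{(j)}|=n$.

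For the case $\ell - i \le k_1 + k_2$, I will show no valid $t$ exists. The sub-case $\ell - i = 0$ gives $h^j = e$, contradicting $T^{(j)} \cap \{e\} = \emptyset$ from the disjoint union; the sub-case $0 < \ell - i \le k_1 + k_2$ reduces to $t = h$ via Lemma~\ref{lem2}(b), contradicting $h \ne t$; and the sub-case $\ell - i < 0$ is handled by inverting to $t^{i-\ell} = h^{-j}$ and applying Lemma~\ref{lem2}(b), with an extra manipulation (multiplication by a suitable $T$-power to realign exponents with pieces of the decomposition of Theorem~\ref{main}(2)) needed when $\ell - i < -k_2$. For the case $\ell - i \ge k_1 + k_2 + 1$, each $t$ with $t^{\ell-i} \in T^{(j)}$ produces, via the forcing, a unique pair $(g,h)=(t,h_t)$ giving an element of $S(i,j)\cap T^{(\ell)}$; after excluding the degenerate situation $h_t = t$ (which would give $t^{\ell-i-j}=e$) one reads off $|S(i,j)\cap T^{(\ell)}| = \psi(\ell-i,j)$ straight from the definition of $\psi$.

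The main obstacle I anticipate is the sub-case $\ell - i < -k_2$ inside the first case, where Lemma~\ref{lem2}(b) does not apply directly and one must combine multiplications by appropriate $T$-powers with the disjoint structure of Theorem~\ref{main}(2) to obtain the contradiction; an analogous care is needed to rule out $h_t = t$ in the second case, using the disjoint union together with the bounds on $\psi(\cdot,0)$ from Lemma~\ref{lem8} when the exponent $\ell-i-j$ falls outside the reach of Lemma~\ref{lem2}.
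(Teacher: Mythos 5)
Your plan follows essentially the same route as the paper's proof: take an element $g^{i}h^{j}=t^{\ell}$ of $S(i,j)\cap T^{(\ell)}$, use the disjoint decomposition of Theorem~\ref{main}(2) to force $g=t$ (the paper's $g_1=g_3$ step, via $g^{-i}t^{\ell}\in S(-i,\ell)$ versus $h^{j}\in T^{(j)}$), dispose of $\ell-i\le k_1+k_2$ with Lemma~\ref{lem2}(b), and count the remaining case through $\psi(\ell-i,j)$. On the subcase $\ell-i<-k_2$ you are actually more careful than the paper, which cites Lemma~\ref{lem2}(b) for all $\ell-i\le k_1+k_2$ even though its hypothesis requires the exponent to lie in $[-k_2,k_1+k_2]$; the realignment you describe (multiplying by a suitable $\pm k_2$ power so that both sides land in pieces $S(\cdot,\cdot)$ or $T^{(\cdot)}$ of the decomposition) is exactly what is needed and does close that subcase.

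The step of your plan that does not go through as written is the exclusion of the degenerate solution $h_t=t$ when $\ell-i\ge k_1+k_2+1$. If $\ell-i-j\le k_1+k_2$, then $t^{\ell-i-j}=e$ is indeed impossible (this is $\psi(m,0)=0$ for $0<m\le k_1+k_2$, e.g.\ by Lemma~\ref{lem2}(c) or directly from the disjoint union), and there your argument is complete. But if $\ell-i-j\ge k_1+k_2+1$ (which happens for instance when $j$ is small and $\ell-i$ is close to $2k_1$), Lemma~\ref{lem8} only yields $\psi(\ell-i-j,0)\le 2$ or $3$; it does not make it zero, so elements $t$ with $t^{\ell-i}=t^{j}$ cannot be ruled out by the tools you cite. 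Such $t$ are counted in $\psi(\ell-i,j)$ yet contribute no element of $S(i,j)\cap T^{(\ell)}$, so in that regime the honest count is $\psi(\ell-i,j)-\psi(\ell-i-j,0)$ rather than $\psi(\ell-i,j)$. Be aware that the paper's own proof has the same blind spot (it asserts the number of admissible solutions is ``precisely'' $\psi(\ell-i,j)$ without addressing the $g_1=g_2$ solutions), and that in all subsequent applications only the upper bound $|S(i,j)\cap T^{(\ell)}|\le\psi(\ell-i,j)$ is used, so nothing downstream is affected; for your write-up you should either prove only that inequality or state the corrected equality with the $\psi(\ell-i-j,0)$ term.
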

\begin{proof}
Consider any element $g_1^{i}g_2^{j}=g_3^\ell\in S(i,j)\cap T^{(\ell)}$.  
Rearranging, we obtain $g_2^{j}=g_1^{-i}g_3^{\ell}$. 
 If $g_1\ne g_3$, then we have $g_1^{-i}g_3^{\ell}\in S(-i,\ell)$. Since $-i,j,\ell\in [-k_2,k_1]^{*}$, this is impossible in view of Lemma~\ref{lem1}. 

If $g_1=g_3$, then we obtain $g_2^{j}=g_1^{\ell-i}$. If $\ell-i\leq k_1+k_2$, then in view of Lemma \ref{lem2} (b), we see that $\ell-i=j$ and $g_2=g_1$. This again is impossible. We may then assume 
$\ell-i \geq k_1+k_2+1$. In that case, the number of solutions for the equation $g_1^{i}g_2^{j}=g_3^\ell$ is precisely $\psi(\ell -i,j)$. 
\end{proof}

\begin{lemma}\label{lem3}
	Let $i_1,i_2\in[-k_1,k_2]^{*}$ and $j_1,j_2\in[-k_2,k_1]^{*}$ with $(i_1,j_1)\ne(i_2,j_2)$. Suppose  $j_1-i_2\ge j_2-i_1$ and $j_2-i_1\in[-2k_2,k_1+k_2]^{*}$. Then, we have
\[  |S(i_1,j_1)\cap S(i_2,j_2)|\leq \left\{\begin{array}{ll}
\psi(j_1-i_2,-i_1,j_2)+\psi(j_2-i_1,-i_2,j_1)\\
+\psi(j_1-i_2,j_2-i_1) & \mbox{if } 
j_1-i_2>k_1+k_2\ge j_2-i_1 \neq 0, \\
\psi(j_1-i_2,-i_1,j_2)+\psi(j_2-i_1,-i_2,j_1)  & \mbox{if } k_1+k_2\geq j_1-i_2\geq j_2-i_1\neq 0,  \\
\psi(j_1-i_2,-i_1,i_1)+(n-1)\psi(j_1-i_2,0) & \mbox{if } j_2-i_1=0.  
\end{array} \right. \]
\end{lemma}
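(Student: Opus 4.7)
My approach is to bound $|S(i_1,j_1) \cap S(i_2,j_2)|$ from above by counting ordered $4$-tuples $(g_1, g_2, g_3, g_4) \in T^4$ satisfying $g_1 \ne g_2$, $g_3 \ne g_4$ and $g_1^{i_1} g_2^{j_1} = g_3^{i_2} g_4^{j_2}$, since each $x$ in the intersection corresponds to at least one such tuple. I would rewrite the equation as $(\star)\colon g_2^{j_1} g_3^{-i_2} = g_1^{-i_1} g_4^{j_2}$, observe that the four exponents $-i_1, -i_2, j_1, j_2$ all lie in $[-k_2, k_1]^*$, and split by whether $g_2 = g_3$ and whether $g_1 = g_4$, in the style of the proof of Lemma~\ref{lem4}(b).

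Case A ($g_2 = g_3$, $g_1 = g_4$) reduces $(\star)$ to $g_2^{j_1-i_2} = g_1^{j_2-i_1}$. When $j_2 = i_1$ (the third regime of the lemma), this is $g_2^{j_1-i_2} = e$, and letting $g_1$ range over $T \setminus \{g_2\}$ produces the $(n-1)\psi(j_1-i_2, 0)$ summand; when $j_2-i_1 \ne 0$, the injectivity of $t \mapsto t^{j_2-i_1}$ supplied by Lemma~\ref{lem2}(a) yields at most $\psi(j_1-i_2, j_2-i_1)$ contributions, giving the third term of the first regime, while in the second regime I would invoke Lemma~\ref{lem2}(c) to force $T^{(j_1-i_2)} \cap T^{(j_2-i_1)} = \emptyset$ and hence vanishing of Case A. Case B ($g_2 = g_3$, $g_1 \ne g_4$) places the right-hand side of $(\star)$ in $S(-i_1, j_2)$, so $g_2^{j_1-i_2} \in T^{(j_1-i_2)} \cap S(-i_1, j_2)$ contributes at most $\psi(j_1-i_2, -i_1, j_2)$; Case C is symmetric and contributes at most $\psi(j_2-i_1, -i_2, j_1)$, which in the third regime collapses to $\psi(0, -i_2, j_1) = 0$ via Lemma~\ref{lem4}(a) since $e \notin S(-i_2, j_1)$. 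Case D ($g_2 \ne g_3$, $g_1 \ne g_4$) places both sides of $(\star)$ in $S$-sets $S(-i_2, j_1)$ and $S(-i_1, j_2)$; by the disjointness in Theorem~\ref{main}(2) they meet only when $\{-i_2, j_1\} = \{-i_1, j_2\}$, and excluding the matching $(i_1, j_1) = (i_2, j_2)$ leaves only $i_2 = -j_2$, $j_1 = -i_1$, forcing $j_1 - i_2 = j_2 - i_1$---ruled out in the first regime by the strict inequality $j_1-i_2>k_1+k_2\ge j_2-i_1$, and to be handled by a short direct argument in the remaining two regimes.

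The main technical obstacle will be the bookkeeping needed to convert these $4$-tuple counts into bounds on distinct intersection elements, mirroring the subtle argument in the proof of Lemma~\ref{lem4}(b) where each $y \in T^{(j-i)} \cap S(-i, j)$ is shown to account for exactly one ``extra'' representation rather than multiple ones. A secondary but delicate point is verifying that the Case A contribution indeed vanishes in the second regime when $j_1-i_2$ and $j_2-i_1$ have opposite signs, so that Lemma~\ref{lem2}(c)'s window-disjointness (which requires both exponents to fit in an interval of length $k_1+k_2$) does not immediately apply; in such boundary situations I expect one must promote the relation $g_2^{j_1-i_2} = g_1^{j_2-i_1}$ into a nontrivial element of some $S$-set and appeal again to Theorem~\ref{main}(2), or otherwise exploit fine structural properties of $G$ to close the argument.
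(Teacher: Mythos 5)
Your overall plan coincides with the paper's: the same rearrangement $g_3^{-i_2}g_2^{j_1}=g_1^{-i_1}g_4^{j_2}$, the same four-way split according to whether $g_2=g_3$ and $g_1=g_4$, and the same identification of the terms $\psi(j_1-i_2,-i_1,j_2)$, $\psi(j_2-i_1,-i_2,j_1)$, $\psi(j_1-i_2,j_2-i_1)$ and $(n-1)\psi(j_1-i_2,0)$. The genuine gap is exactly the point you flag but do not close: the vanishing of your Case A in the middle regime $k_1+k_2\ge j_1-i_2\ge j_2-i_1\ne 0$. Lemma~\ref{lem2}(c) only gives disjointness of $T^{(u)},\dots,T^{(u+k_1+k_2)}$ for exponents lying in a common window of length $k_1+k_2$, whereas here $(j_1-i_2)-(j_2-i_1)$ can be as large as $k_1+3k_2$ (e.g.\ $j_1-i_2=k_1+k_2$ and $j_2-i_1=-2k_2$ with $k_2\ge 1$), so the cited lemma simply does not apply. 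Moreover, the statement you would want, $T^{(j_1-i_2)}\cap T^{(j_2-i_1)}=\emptyset$, is stronger than needed and is not guaranteed (a single $g$ with $g^{(j_1-i_2)-(j_2-i_1)}=e$ would already violate it); what must be shown is that $g_2^{j_1-i_2}=g_1^{j_2-i_1}$ has no solution with $g_1\ne g_2$. The paper's proof supplies the missing idea: choose $a,b\in[-k_2,k_2]^{*}$ with $j_1-i_2-a,\ j_2-i_1-b\in[-k_2,k_1]^{*}$ and rewrite the relation as $g_2^{j_1-i_2-a}g_1^{-b}=g_1^{j_2-i_1-b}g_2^{-a}$; for $g_1\ne g_2$ this exhibits one element lying in two of the sets $S(\cdot,\cdot)$ with exponent pairs from $[-k_2,k_1]^{*}$, and the disjointness in Theorem~\ref{main}(2) then forces $g_1=g_2$, a contradiction. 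Since the second-regime bound in the lemma contains no slack term for Case A, this step must actually be proved, not conjectured; your "promote into an $S$-set" guess is the right direction, but it is the crux of the argument and is left unexecuted.

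A second, smaller deferral is your Case D in the swapped-index situation $-i_2=j_2$, $-i_1=j_1$ (which can genuinely occur in the second and third regimes, e.g.\ $i_1=-1$, $j_1=1$, $i_2=-2$, $j_2=2$). You postpone it to "a short direct argument", but again the stated bounds leave no room for any contribution from this case, so it has to be ruled out explicitly; the paper handles it inside its Case (1) by arguing that equality of the two representations forces $g_1=g_2$ and $g_3=g_4$, contradicting the membership requirements. Note that inside a single set $S(j_1,j_2)$ representations need not be unique (this is precisely why Lemma~\ref{lem4}(b) carries a correction term), so some care with the extra configurations such as $g_1=g_3$ or $g_2=g_4$ is required here; a bare appeal to uniqueness of representation will not do.
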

\begin{proof}
	Consider  elements $g_1^{i_1}g_2^{j_1}\in S(i_1,j_1)$ and $g_3^{i_2}g_4^{j_2}\in S(i_2,j_2)$ with $g_1\ne g_2$ and $g_3\ne g_4$. 
We need to find such $g_1, g_2, g_3, g_4$ with
\begin{equation} \label{E1}
g_1^{i_1}g_2^{j_1}=g_3^{i_2}g_4^{j_2}.\end{equation}
	Rearranging, we obtain
	\begin{align*}
		g_3^{-i_2}g_2^{j_1}=g_1^{-i_1}g_4^{j_2}.
	\end{align*}

{\bf{Case (1): $g_3\ne g_2$ and $g_1\ne g_4$.}}

For this case, we have 
\[ g_3^{-i_2}g_2^{j_1}=g_1^{-i_1}g_4^{j_2}\in S(-i_2,j_1)\cap S(-i_1,j_2). \]
Since $-i_1,-i_2,j_1,j_2\in [-k_2,k_1]^{*}$, it follows from Theorem \ref{main} (2) that $(-i_2,j_1)=(-i_1,j_2)$ or 
$(-i_2,j_1)=(j_2, -i_1)$. If $j_1=j_2$, then $(-i_1,j_1)\ne(-i_2,j_2)$. Hence,  $j_1\neq j_2$ and $(-i_2,j_1)\ne(j_2,-i_1)$. But then $g_3=g_4$ and $g_1=g_2$, which is again impossible. 

{\bf{Case (2): $g_3=g_2$ and $g_1\ne g_4$.}}

For this case, we obtain $g_2^{j_1-i_2}=g_1^{-i_1}g_4^{j_2}$. Since $g_1^{-i_1}g_4^{j_2}\in S(-i_1,j_2)$ and $-i_1,j_2\in[-k_2,k_1]^{*}$, there are at most $\psi(j_1-i_2,-i_1,j_2)$ solutions in this case. Note that we do not include those solutions with $g_1=g_2$, so we only get a bound in term of $\psi(j_1-i_2,-i_1,j_2)$.
  
{\bf{Case (3): $g_4=g_1$ and $g_2\ne g_3$.}}

  By using a similar argument as in Case (2),  we obtain $\psi(j_2-i_1,-i_2,j_1)$ solutions.
	
{\bf{Case (4):  $g_3=g_2$ and $g_1=g_4$.}}	

For this case, we get $g_2^{j_1-i_2}=g_1^{j_2-i_1}$. 
Note that $-2k_2\leq j_2-i_1\le 2k_1$ and we may assume $j_1-i_2\geq j_2-i_1$. 
If $k_1+k_2\geq j_1-i_2$, then there exist $a,b\in[-k_2,k_2]^{*}$ such that $j_1-i_2-a,j_2-i_1-b\in[-k_2,k_1]^{*}$. This leads to $g_2^{j_1-i_2-a}g_1^{-b}=g_1^{j_2-i_1-b}g_2^{-a}$. Hence $g_1=g_2$. Therefore, Equation (\ref{E1}) has no solution with $g_1\neq g_2$.

In case $j_1-i_2>j_2-i_1 \neq 0$, for each $g_2\in T$, there exists at most one solution for $g_1$ as 
$|T^{(j_2-i_1)}|=n$. 
Thus, 
	\[|S(i_1,j_1)\cap S(i_2,j_2)|= \psi(j_1-i_2,-i_1,j_2)+\psi(j_2-i_1,-i_2,j_1)+\psi(j_1-i_2,j_2-i_1).\]
However, if $j_2=i_1$, then $g_1^{-i_1}g_4^{j_2}=e$ whenever $g_1=g_4$. For each $g_1$ chosen, $g_2^{j_1-i_2}=e$. Therefore, the number of choices of $g_2$ is exactly $\psi(j_1-i_2, 0)$. In other words, those $g_1^{i_1}g_2^{j_1}$ are counted $n$ times instead of one time. 
\end{proof}

\medskip

In order to compute $|S(i,j)|$ with $j-i=k_1+k_2+1$, we need to compute $\psi(k_1+k_2+1, 0)$. This is really the reason why we need to assume $k_1+k_2+1$ is composite. 

\medskip

\section{$\mathcal{B}(n,2,3,0)$}\label{sub3}

In Section 5,  we find a bound of $n$ in terms of $k_1$ and $k_2$ such that there is no such tiling whenever $n$ exceeds the bound. However, in dealing with specific cases, the general bound might not be good enough to solve those cases completely. Very often, we need to vary the index sets so as to find a better bound for $n$.  We can then deal with those $n$ less than the bound found. Here, we illustrate how it works in case of $B(n, 2, 3, 0)$, which also illustrates the simple idea behind our proofs. 

Let \[\mathcal{X}=\{(i,j): i\in[-3,-1], j\in[1,3], j-i\le 5\}.\]
In this case, $k_1=3, k_2=0$ and the smallest prime that divides $k_1+k_2+1$ is $2$. 
Our objective is to derive an inequality using 
\[ |G|-1\geq |\cup_{(i,j)\in\mathcal{X}}S(i,j)|. \]
We need to find $|S(i_1,j_1)|$ and $|S(i_1,j_1) \cap S(i_2,j_2)|$ for 
any $(i_1,j_1), (i_2,j_2)\in \mathcal{X}$.

\begin{lemma}\label{lemma-new5}
	\begin{enumerate}
		\item[(a)] If $(i,j)\in\mathcal{X}$ with $j-i\le 3$, then $|S(i,j)|=n^2-n$.
		\item[(b)] If $(i,j)\in\mathcal{X}$ with $j-i= 4$, then $|S(i,j)|=n^2-n-\frac{(C-1)n}{2}-\psi(4,-i,j)$.
		\item[(c)] If $(i,j)\in\mathcal{X}$ with $j-i= 5$, then $|S(i,j)|=n^2-n-\psi(5,-i,j)$.
	\end{enumerate}
Here $C$ is the one we defined in Lemma \ref{lem8}.
\end{lemma}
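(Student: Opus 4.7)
The plan is to split on the value of $j-i$ for $(i,j)\in\mathcal{X}$ (which ranges over $\{2,3,4,5\}$) and reduce each range to one of the sub-cases of Lemma~\ref{lem4}. Since $k_1=3$ and $k_2=0$ give $k_1+k_2=3$ and $k_1+k_2+1=4$, the three bands $\{2,3\}$, $\{4\}$, $\{5\}$ will align respectively with Lemma~\ref{lem4}(b) (via $0<j-i\leq k_1+k_2$), Lemma~\ref{lem4}(c) (via $j-i=k_1+k_2+1$), and Lemma~\ref{lem4}(b) again via the $\gcd(j-i,|G|)=1$ alternative.

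Part (a) should be immediate: since $0<j-i\leq 3$, Lemma~\ref{lem4}(b) gives $|S(i,j)|=n^2-n-\psi(j-i,-i,j)$, and because $j-i\in\{2,3\}\subseteq[-k_2,k_1]^*$, Lemma~\ref{lem1}(b) forces the correction term to vanish. Part (b) is likewise essentially immediate: when $j-i=4=k_1+k_2+1$, Lemma~\ref{lem4}(c) directly yields the stated expression (in the form of the lower bound that the subsequent union-bound estimate $|G|-1\geq\bigl|\bigcup_{(i,j)\in\mathcal{X}}S(i,j)\bigr|$ is designed to exploit).

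For part (c), the substantive step is to verify the hypothesis $\gcd(5,|G|)=1$, without which Lemma~\ref{lem4}(b) cannot be invoked for $j-i=5$. Since $|G|=1+3n+9\binom{n}{2}$, I would reduce modulo $5$; equivalently, $2|G|\equiv 9n^2-3n+2\equiv 4n^2+2n+2\pmod{5}$, and the discriminant of the quadratic $2n^2+n+1$ is $-7\equiv 3\pmod{5}$, a non-residue modulo $5$. Hence $5\nmid|G|$ for every $n$, so Lemma~\ref{lem4}(b) yields $|S(i,j)|=n^2-n-\psi(5,-i,j)$.

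The main (and essentially only) potential obstacle is the mod-$5$ verification in (c); fortunately this is a short calculation that holds uniformly in $n$, requiring no case distinction. Everything else is a direct specialization of Lemma~\ref{lem4} combined with Lemma~\ref{lem1}(b), so no further machinery beyond what has already been developed in the preliminaries is needed.
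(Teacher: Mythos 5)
Your proposal is correct and takes essentially the same route as the paper: all three parts are specializations of Lemma~\ref{lem4} (with Lemma~\ref{lem1}(b) killing the correction term in (a)), and the only substantive check is $5\nmid|G|$ for part (c), which the paper simply asserts and you verify cleanly via the discriminant of $2n^2+n+1$ modulo $5$. Your remark that part (b) actually comes from Lemma~\ref{lem4}(c) and is strictly a lower bound (which is all the subsequent union-bound needs) is an accurate and slightly more careful reading than the paper's one-line proof, which cites Lemma~\ref{lem4}(b) for all three cases.
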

\begin{proof}
	(a), (b) and (c) follow directly from Lemma \ref{lem4} (b) as 
$\gcd(\frac{9n^2}{2}-\frac{3n}{2}+1,5)=1$ implies $\psi(5, 0)=0$.
\end{proof}

By Lemma \ref{lemma-new5},
\begin{align*}
	&\sum_{(i,j)\in\mathcal{X}}|S(i,j)|\\
	=&3(n^2-n)+3(n^2-(C+1)n/2)+2(n^2-n) -\psi(4,1,3)-\psi(4,2,2)-\psi(4,1,3)-\\
	&\psi(5,2,3) -\psi(5,2,3).
\end{align*}

To find $|S(i_1,j_1)\cap S(i_2, j_2)|$, we apply Lemma \ref{lem3}. 
There are $8$ elements in the index set $\mathcal{X}$ and we need to determine ${8 \choose 2}$ pairs of $|S(i_1, j_1)\cap S(i_2, j_2)|$. Note that $2\leq j_2-i_1\leq 6$. To avoid double counting, we may assume $\ell=j_2-i_1\geq j_1-i_2$. 
The argument is straightforward but tedious. We just illustrate how to group relevant terms together to obtain our bound. 
For example, as $\ell\leq 6$, we consider intersection of
$|S(-3,\alpha)\cap S(\beta, 3)|$ and vary $\alpha$ and $\beta$. We then group all terms of the form $\psi(\ell, i, j), \psi(\ell, i)$ together. Note that in this case $\alpha=1,2$ and $\beta=-1,-2$. 
Then, we consider the case $\ell=5$, in which we have two cases, $S(-2, \alpha)\cap S(\beta, 3), S(-3, \alpha)\cap S(\beta, 2)$. Altogether, we get 
\[ S_1=\psi(6, 2, 2)+2\psi(6, 1,2)+\psi(6, 1,1)+\psi(6, 4)+ 2\psi(6, 3)+\psi(6,2).\]
\[ S_2=2\psi(5, 2,3)+2\psi(5, 2,2)+2\psi(5, 1,3) +4\psi(5, 1,2)+2\psi(5,1,1)+4\psi(5, 4)+4\psi(5, 3)+2\psi(5, 2).\]
\[ S_3=3\psi(4, 2,2)+4\psi(2,3)+ \psi(4, 3,3)+2\psi(4, 1, 3)+ 6\psi(4, 1,2)+3\psi(4,1,1)+6\psi(4, 3)+3\psi(4,2)+2\psi(4,4).\]
We only need to consider $\ell \geq 4$ as for $\ell \leq 3$, the term $\psi(\ell, i, j),\psi(\ell, i)=0$ when $i\leq j$ and $i, j\in [1,3]$ by Lemma \ref{lem1}.
Therefore, 
\begin{align*}
 |\cup_{(i,j)\in\mathcal{X}}S(i,j)| \geq & 3(n^2-n)+3(n^2-5n/2)+2(n^2-n) -\psi(4,1,3)-\psi(4,2,2)-\\
 & \psi(4,1,3)-\psi(5,2,3) -\psi(5,2,3)-S_1-S_2-S_3.
\end{align*}
For each $\ell$, combine all the terms of the form $\psi(\ell, i, j)$ and $\psi(\ell, i)$ for any $\ell$ with 
$i, j\in [1,3]$ as one sum. 
For example, the sum 
\begin{align*}
&\psi(5,2,3) +\psi(5,2,3)+ 2\psi(5, 2,3)+2\psi(5, 2,2)+2\psi(5, 1,3) +4\psi(5, 1,2)+2\psi(5,1,1)+\\
&4\psi(5, 4)+4\psi(5, 3)+2\psi(5, 2)\leq 4n
\end{align*}
as there are four $\psi(5,2,3) $ in the sum. Note that $\psi(5,2,3)=\psi(5, 3,2)$. Using a similar argument,
\[ \psi(6, 2, 2)+2\psi(6, 1,2)+\psi(6, 1,1)+\psi(6, 4)+ 2\psi(6, 3)+\psi(6,2) \leq 2 n  \]
and
\begin{align*}
	&\psi(4,1,3)+\psi(4,2,2)+\psi(4,1,3)+3\psi(4, 2,2)+4\psi(2,3)+ \psi(4, 3,3)+2\psi(4, 1, 3)+ 6\psi(4, 1,2)+\\
	&+3\psi(4,1,1)+6\psi(4, 3)+3\psi(4,2)\leq 6n.
\end{align*}
For the terms of the form $\psi(i, j)$ with $6\geq i\neq j\geq 4$, we just bound $\psi(i,j)$ by $n$. Altogether, we have $5$ such terms and the bound is $5n$. Lastly, to find $\psi(4, 4)$, we apply previous argument to obtain $\psi(4,4)\leq 2(C-1)n$. Combining the above inequalities, we then get
\[ |G|-1\geq |\cup_{(i,j)\in\mathcal{X}}S(i,j)| \geq 8n^2-5n-17n-(C-1)n-\frac{3(C+1)n}{2}.\]
Hence, we have
\begin{align}
	7n\le42+5C.\label{ineq1}
\end{align}

By Lemma \ref{lem8}, $C\leq 4$. Hence, we conclude $n\leq 8$. For $n=7$ and $8$, $|G|$ is odd. As the smallest prime that divides $k+1=4$ is $2$, $C=1$ and inequality (9) fails. Hence, $n\neq 7$ or $8$. For $3\leq n\leq 6$, we use a computational search 
and check that no tiling of $\mathbb{Z}^n$ exists by $B(n, 2,3,0)$ if $n=4, 5$ and $6$. 
In case $n=3$, we take $T=\{1,10,26\}$ in the group $\mathbb{Z}_{37}$ and the conditions in Theorem \ref{main} are satisfied. This 
verifies that a lattice tiling of $\mathbb{Z}^{3}$ by $B(3,2,3,0)$ exists.

\section{$k_1=k\ge2$ and $k_2=k-1$}\label{sub2}

In this section, we illustrate how our idea be used to solve completely for an infinite families. Let $k_1=k$ and $k_2=k-1$. We define 
\[ \mathcal{Y}=\{(i,j): i\in[-k+1,k-1]^{*}, \ j\in[-k+1,k]^{*}, j\ge i\}.\]

Recall that we may assume two conditions in Theorem \ref{main} hold. Thus, we conclude
\[G=e+\sum_{i\in [-k+1, k]^*}T^{(i)}+S(k, k) +\sum_{(i,j) \in  \mathcal{Y}}
S(i,j)=e+\sum_{i\in [-k+1, k]^*}T^{(-i)}+S(-k, -k)+\sum_{(i,j) \in \mathcal{Y}}
S(-i,-j).\]

By applying these two group ring equations,  we deduce the following:

\begin{lemma}\label{lemma19}
\begin{itemize}
\item [(a)] $|S(-k,1)|=|S(-k, 1)|=n^2-n$; and $|S(-k, 2)|=n^2-n$ if $k\geq 3$. 
\item [(b)] $S(-k,1), S(-k, -1), \{e\} \cup \bigcup_{j\in [-k+1, k-1]^*}T^{(i)}\cup 
\bigcup_{i,j \in  [-k+1, k-1]^*,i\le j}
S(i,j)$ are pairwise disjoint. 
\item [(c)] $S(-k,1), S(-k, -1), S(-k, 2), \{e\} \cup \bigcup_{j\in [-k+1, k-1]^*}T^{(i)}\cup 
\bigcup_{i,j \in  [-k+1, k-1]^*,i\le j}
S(i,j)$ are pairwise disjoint if $k\geq 3$. 
\end{itemize}
\end{lemma}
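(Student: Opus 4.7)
Part (a) reduces to Lemma~\ref{lem4}(b). For each choice $(i,j)\in\{(-k,1),(-k,-1)\}$, and also $(-k,2)$ when $k\ge 3$, the gap $j-i$ equals $k+1$, $k-1$, or $k+2$ respectively, each lying in $(0,k_1+k_2]=(0,2k-1]$. Hence Lemma~\ref{lem4}(b) gives
\[ |S(-k,j)|=n^2-n-\psi(j-i,\,k,\,j),\]
so the cardinality claims reduce to showing each such $\psi$ vanishes. The simplest case is $\psi(k-1,k,-1)$: from $t^{k-1}=g^kh^{-1}$ with $g\ne h$, Lemma~\ref{lem2}(a) eliminates $t=g$ and $t=h$, and in the remaining case $t\ne g,h$ we rewrite $t^{k-1}h=g^k$, placing one element simultaneously in $S(k-1,1)$ and $T^{(k)}$, contradicting the standard partition of Theorem~\ref{main}(2).

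The remaining cases $\psi(k+1,k,1)=0$ and (for $k\ge 3$) $\psi(k+2,k,2)=0$ are more delicate, because the naive rearrangement lands in the set $S(-k,k)$ (respectively $S(-k,k+1)$), which belongs to neither the standard partition nor the ``dual'' partition obtained by applying the automorphism $x\mapsto x^{-1}$ to Theorem~\ref{main}(1). To close these, I plan to equate the two partition identities and cancel the common pieces, producing the $\mathbb{Z}[G]$-identity
\[ T^{(k)}+S(k,k)+\sum_{i\in[-k+1,k-1]^*}S(i,k)=T^{(-k)}+S(-k,-k)+\sum_{i\in[-k+1,k-1]^*}S(-k,i).\]
Comparing the coefficient of a putative offender $t^{k+1}$ (respectively $t^{k+2}$) on both sides, and invoking Lemma~\ref{lem2}(c) to kill the $T^{(\pm k)}$ contributions, collapses the equation into a form already excluded by Lemma~\ref{lem2}(a).

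For Parts~(b) and (c), the required pairwise disjointness splits into three checks. Disjointness from $\{e\}$ is Lemma~\ref{lem4}(a). For $S(-k,j)\cap T^{(\ell)}$ with $\ell\in[-k+1,k-1]^*$, Lemma~\ref{lemma-new1} applies, since $\ell-(-k)=\ell+k\in[1,2k-1]\setminus\{k\}\subseteq[1,k_1+k_2]$ lands in the ``zero case.'' For $S(-k,j)\cap S(i',j')$ with $(i',j')$ in the interior, as well as for pairwise intersections among $S(-k,1)$, $S(-k,-1)$, and $S(-k,2)$, I would apply Lemma~\ref{lem3}, after possibly swapping the roles of the two pairs so that its ordering hypothesis $j_1-i_2\ge j_2-i_1$ holds. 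The resulting correction terms $\psi(\cdot,\cdot,\cdot)$ either vanish by Lemma~\ref{lem1}(b) (when the leading exponent lies in $[-k_2,k_1]^*$) or by Lemma~\ref{lem6} (when it lies in $[k_1+1,k_1+k_2]$ with the secondary indices in range); any residual terms coincide with the $\psi$ values already shown to vanish in Part~(a).

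The principal obstacle is the vanishing $\psi(k+1,k,1)=0$, since the natural manipulation of $t^{k+1}=g^kh$ escapes both the standard and the dual partition; the $\mathbb{Z}[G]$-identity displayed above is precisely the mechanism that closes this boundary case, and once that step is in hand, the rest of the lemma follows from a routine---though tedious---combination of Lemmas~\ref{lem2},~\ref{lem4},~\ref{lemma-new1},~\ref{lem3}, and~\ref{lem6}.
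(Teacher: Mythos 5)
Your plan has a genuine gap at both of its load-bearing points. For Part (a), the crucial vanishings $\psi(k+1,k,1)=0$ and $\psi(k+2,k,2)=0$ are not delivered by the coefficient comparison you sketch: each side of your cancelled identity is a disjoint union, so knowing that a putative offender $x=t^{k+1}=g^kh$ lies in the left-hand set only tells you that $x$ lies in exactly one right-hand piece, namely $T^{(-k)}$, $S(-k,-k)$, or some $S(-k,u)$ with $u\in[-k+1,k-1]^*$. A contradiction requires eliminating every one of these, and ``Lemma \ref{lem2}(c) kills the $T^{(\pm k)}$ contributions'' does not do it: the exponent gap between $T^{(k+1)}$ and $T^{(-k)}$ is $2k+1>k_1+k_2$, and the cases $x\in S(-k,-k)$ and $x\in S(-k,u)$ are not addressed at all. (Ironically, these vanishings follow at once from Lemma \ref{lem6}, since $S(k,1)=S(1,k)$ gives $\psi(k+1,k,1)=\psi(k+1,1,k)$ with $m-i=k$ and $m-j=1$, both $\le k_1+k_2$; you invoke \ref{lem6} elsewhere but not here.) For Parts (b) and (c) the problem is worse: Lemma \ref{lem3} only gives upper bounds by sums of $\psi$'s, and several residual terms are neither killed by Lemma \ref{lem1}(b) or \ref{lem6} nor equal to the Part (a) quantities. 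Concretely, for $S(-k,1)\cap S(-k,-1)$ with $k=2$ the bound reduces to $\psi(3,2,-1)$, and for $S(-k,1)\cap S(k-1,k-1)$ it contains $\psi(2k-1,-(k-1),1)$; in both cases every admissible labelling has $m-i>k_1+k_2$, so Lemma \ref{lem6} is silent, and these are not among the $\psi$'s from Part (a). So the disjointness claims are not established by the route you describe.

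The paper's proof sidesteps all of this with the observation you almost made when you cancelled the common pieces: applying $x\mapsto x^{-1}$ to the partition of Theorem \ref{main}(2) yields a second partition $G=\{e\}\cup\bigcup_i T^{(-i)}\cup\bigcup S(-i,-j)$. The sets $S(-k,1)$, $S(-k,-1)$, $S(-k,2)$, the $T^{(i)}$ with $i\in[-k+1,k-1]^*$, the interior $S(i,j)$ with $i,j\in[-k+1,k-1]^*$, and $\{e\}$ are all distinct pieces of this reflected partition, which gives (b) and (c) immediately; and since inversion is a bijection carrying $S(-k,u)$ onto $S(-u,k)$, a piece of the original partition whose cardinality is forced to be $n^2-n$, Part (a) follows as well. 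Used as a partition statement rather than as an equation to compare coefficients in, your identity finishes the lemma in two lines; as written, your argument does not close.
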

\begin{proof}
Note that (a) follows from the equalities $|S(-k,i)|=|S(-i ,k)|=n^2-n$. (b) and (c) follow from above equality.
\end{proof}

\medskip

In view of Lemma \ref{lemma19}, we see that 
\[ |(S(-k, 1)\cup S(-k, -1)|=2(n^2-n) \mbox{ and } \]
\[ |\{e\} \cup \bigcup_{i\in [-k+1, k]^* }T^{(i)}\cup \bigcup_{(i,j) \in  \mathcal{Y}} S(i,j)|=|G|- |S(k, k)|=|G|- \frac{n(n-1)}{2}.\]

\medskip

Next, we study  the intersection between $S(-k,u)$ and $S(j,k)$ for $u=-1,1, 2$ and $j\in [-k+1,k-1]$. By Lemma \ref{lem3}, we obtain 
\[|(S(-k,u) \cap(T^{(k)}\cup \bigcup_{j\in  [-k+1, k-1]^* }S(j,k))| = \psi(2k, u)+\psi(2k,0)(n-1) +X, \mbox{ where } \]
\[  X= \sum_{j\in  [-k+1, k-1]^*  ,j\ne u} \psi(2k,-j,u)   +\psi(u-j,k,k)+\psi(2k,u-j). \]
Note that $ \psi(u-j,k,k)$ is nonzero only if $u-j\notin  [-k+1, k]^*$. By Lemma \ref{lem1} again, we see that 
\[ X + \psi(2k, u)+\psi(2k, 0)\leq \left\{ \begin{array}{ll}
n & \mbox{ if } u=1, \\
n+\psi(k+1, k, k) &  \mbox{ if } u=2, \\
n+\psi(-k, k, k)  & \mbox{ if } u=-1. \end{array}\right. \]

We are now ready to prove Theorem \ref{mainthm5}. Let
\[ G_1=S(-k, 1)\cup S(-k, -1) \cup \{e\} \cup \bigcup_{i\in  [-k+1, k]^* }T^{(i)}\cup \bigcup_{(i,j) \in  \mathcal{Y}}S(i,j).\]
Clearly, we have 
\[|G|\geq |G_1|\geq 2(n^2-n)+|G|- \frac{n(n-1)}{2}-(2+2 \psi(2k, 0))n+4\psi(2k,0)-\psi(-k, k, k).\]
We then deduce that 
\begin{equation}\label{eq10}
 0 \geq \frac{3n(n-1)}{2}-(2+2 \psi(2k, 0))n+4\psi(2k,0)-\psi(-k, k, k) .\end{equation}
By Lemma \ref{lem8} (b), $\psi(2k, 0)\leq 3$. 
Since $\psi(k+1, k,k)\leq n$. We see  inequality (\ref{eq10}) fails unless $n\leq 6$.
However, if $n=4,5,6$, $\psi(2k,0)\leq 1$ and inequality (\ref{eq10}) fails again. 
Thus $n=3$. If $k=2$,
a computational search confirms that no lattice tiling of $\mathbb{Z}^{3}$ by $B(3,2,2,1)$ exists.

Next, we consider the case $k\geq 3$ and 
\[ G_2=(S(-k, 1)\cup S(-k, 2) \cup \{e\} \cup \bigcup_{i\in  [-k+1, k]^* }T^{(i)}\cup \bigcup_{(i,j) \in  \mathcal{Y}}S(i,j).\]
Note that $|G|$ is odd when $n=3$ and thus $\psi(2k,0)=0$. 
Replacing $\psi(-k, k, k)$ by $\psi(k+1, k, k)$ for the inequality (\ref{eq10}), we obtain 
\[  0 \geq 3\times 3-2\times3 -\psi(k+1, k, k) .\]
This is possible only if $3=\psi(k+1, k, k)$. 
By Lemma \ref{lem2}, $|T^{(k+1)}|=3$ and $|S(k, k)|=3$. Suppose $T=\{g_1, g_2, g_3\}$.  Therefore, $\{g_1^{k+1},  g_2^{k+1},  g_3^{k+1}\}=\{
 g_1^{k}g_2^{k}, g_2^{k}g_3^{k}, g_3^{k}g_1^{k}\}$. 
 We then obtain 
\[ g_1^{k+1}=g_2^k g_3^k, \ g_2^{k+1}=g_1^k g_3^k\mbox{ and } g_3^{k+1}=g_1^k g_2^k.\]
(Note that if $g_1^{k+1}=g_1^k g_i^k$, we immediately get a contradiction.)
Then, 
\[ g_1^{k+1}g_2=g_2^{k+1} g_3^k=g_1^k g_3^{2k}=g_1^{2k}g_3^{k-1}g_2^k.\]
Thus, we obtain $g_1^{-k+1}=g_3^{k-1} g_2^{k-1}$, which is impossible.

\section{General cases}\label{gen}

In this section, our objective is to find a bound $N(k_1, k_2)$ such that for any $B(n, 2, k_1, k_2)$ with $k_1>k_2$ and $k_1+k_2+1$ not a prime,  then no lattice tiling of $\mathbb{Z}^n$ by $\mathcal{B}(n,2,k_1,k_2)$ exists whenever $n\geq N(k_1, k_2)$,

In the previous section, we deal with $\mathcal{B}(2, n, k, k-1)$ and clearly $k+k-1+1$ is  composite. As illustrated in the proof, one of the key calculation is to find a bound for $\psi(2k, 0)$. It turns out that for $m\in [k_1+k_2+1, 2k_1]$, 
we can obtain a good bound to compute for $\psi(m, 0)$ if $m$ is composite.
In case $k_1+k_2+1$ is prime, it is possible that $\psi(k_1+k_2+1,0)=n$. In that case, $G$ is $p$-elementary, and for any $i,j$, $S(i, j)$ is actually one of those $S(u, v)$ with $u,v\in [-k_2, k_1]^*$.
Our method will then fail. However, it is still possible that even if $k_1+k_2+1$ is a prime, $G$ need not be $p$-elementary. (Indeed, we use a program to test out the case when $k_1+k_2+1=5$, except for $n=2$, we cannot find any other prime power for $|G|$.) In that case, it is still possible to obtain result similar to Theorem~\ref{mainthm1}. Since the proofs for those cases are more complicated and not shedding new light, we just deal with the case $k_1+k_2+1$ not a prime. 

\medskip

 Let $M\in[1,k_1-k_2]$ such that all elements in $[k_1+k_2+1,k_1+k_2+M]$ are composite. Define the following index sets:
\begin{align*}
	&\mathcal{X}_1=\{(i,j): i\in[-k_1,-k_2-1],\ j\in[k_2+1,k_1], j-i\le k_1+k_2+1\},\\
	&\mathcal{X}_2=\{(i,j): i\in[-k_2-M,-k_2-1],\ j\in[-k_2,k_2]^{*}\},\\
  & \mathcal{X}=\mathcal{X}_1\cup \mathcal{X}_2, \\
	&\mathcal{Y}=\{(i,j): i\in[-k_2,k_2]^{*},\ j\in[-k_2,k_1]^{*}, j\ge i\},\\
&\mathcal{Z}=\{(i,j): i\in[k_2+1,k_1]^{*},\ j\in[k_2+1,k_1]^{*}, j\ge i\}.
\end{align*}

Recall that
\[ G=\{e\}\cup \bigcup_{i\in [-k_2, k_1]^*} T^{(i)} \cup \bigcup_{(i,j)\in \mathcal{Y}} S(i,j) \cup \bigcup_{(i,j)\in \mathcal{Z}} S(i,j).\]

As the computations involved in this section are rather complicated, we first illustrate the simple idea behind the proof. To prove Theorem~\ref{mainthm1}, we aim to obtain a contradiction when replacing the index set  $ \mathcal{Z}$ by $\mathcal{X}$. We consider the set 
\[ G'=\{e\}\cup \bigcup_{i\in [-k_2, k_1]^*} T^{(i)} \cup \bigcup_{(i,j)\in \mathcal{Y}} S(i,j) \cup \bigcup_{(i,j)\in \mathcal{X}} S(i,j).\]
Our objective is to find a condition for which $|G'|\leq |G|$.

Note that by assumption, we have
\begin{equation}  |\{e\}\cup \bigcup_{i\in [-k_2, k_1]^*} T^{(i)} \cup \bigcup_{(i,j)\in \mathcal{Y}} S(i, j)|=|G|-\sum_{k_2+1\le i\le j\le k_1}|S(i,j)|=|G|-\frac{(k_1-k_2)^2}{2}(n^2-n).\end{equation}
To compute $|G'|$, we need to compute
\[ \left|\bigcup_{(i,j)\in \mathcal{X}} S(i,j)\right| \mbox{ and } 
\left|\left(\bigcup_{(i,j)\in \mathcal{X}} S(i,j) \right) \cap 
\left( \bigcup_{i\in [-k_2, k_1]^*} T^{(i)} \cup \bigcup_{(i,j)\in \mathcal{Y}} S(i,j)\right)\right|.\]

As we show later, $|S(i, j)|$ is roughly the order of $n^2$ for any $(i, j) \in \mathcal{X}$; 
and the number of elements in $\mathcal{X}$ is larger than $\mathcal{Z}$. To get a contradiction for large enough $n$, we only need to show that there exists a constant $C(k_1, k_2)$ such that $|S(i,j)\cap S(i', j')|\leq C(k_1, k_2)n $ 
for any distinct $(i, j), (i', j')\in \mathcal{X}\cup \mathcal{Y}$. Since 
$|\mathcal{X}\cup \mathcal{Y}|$ is roughly $(k_1+k_2)^2$, 
$\sum_{(i, j), (i', j')\in \mathcal{X}\cup \mathcal{Y}}|S(i,j)\cap S(i', j')|$ is bounded by 
$2(k_1+k_2)^2 C(k_1, k_2) n$. But then as
$\sum_{(i,j)\in \mathcal{X}} |S(i, j)|- \sum_{(i,j)\in \mathcal{Z}} |S(i, j)|\geq An^2-Bn $ for some constants $A, B$ that depend only on $k_1, k_2$, we see that no such tiling exists if $n$ is sufficiently large. To prove this weaker version, we can avoid complicated computations in our proofs shown below. However, to get a better understanding on the bound, and to solve some specific cases, we need to find a better bound so as to resolve those case completely.
The following is an immediate consequence of Lemma \ref{lem4} and $|S(i,j)|=|S(-i,-j)|$.

\begin{lemma}\label{lemma-new4}
	\begin{enumerate}
		\item [(a)] If $(i,j)\in\mathcal{X}_1$ and $j-i\le k_1+k_2$, then $|S(i,j)|=n^2-n-\psi(j-i,-i,j).$
		\item [(b)] If $(i,j)\in\mathcal{X}_1$ and $j-i= k_1+k_2+1$, then $|S(i,j)|\ge n^2-n-\frac{(C-1)n}{2}-\psi(k_1+k_2+1,-i,j)$ where $C$ is the number defined in Lemma \ref{lem8} (a). 
		\item [(c)] If $(i,j)\in\mathcal{X}_2$, then $|S(i,j)|=n^2-n$.
		\item [(d)] For any $(i,j)\in\mathcal{X}$, the identity element	$e$ is not contained in $S(i,j)$.\end{enumerate}

\end{lemma}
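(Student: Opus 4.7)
The plan is to derive each of (a), (b), (c), (d) as a direct consequence of Lemma~\ref{lem4}, with an additional $\psi$-vanishing argument needed only for (c). The key preliminary observation is that every $(i,j)\in\mathcal{X}$ satisfies the hypotheses $i\in[-k_1,k_2]^*$ and $j\in[-k_2,k_1]^*$ required by Lemma~\ref{lem4}. For $(i,j)\in\mathcal{X}_1$ this is immediate from the definition. For $(i,j)\in\mathcal{X}_2$, the standing assumption $M\le k_1-k_2$ gives $-i\le k_2+M\le k_1$, so $i\in[-k_1,-k_2-1]\subseteq[-k_1,k_2]^*$, and $j\in[-k_2,k_2]^*\subseteq[-k_2,k_1]^*$.

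Once this is established, part (d) is just Lemma~\ref{lem4}(a). Part (a) is Lemma~\ref{lem4}(b), since for $(i,j)\in\mathcal{X}_1$ we have $0<j-i\le k_1+k_2$ directly from the definition. Part (b) is Lemma~\ref{lem4}(c) applied to $(i,j)\in\mathcal{X}_1$ with $j-i=k_1+k_2+1$; the stated lower bound is exactly the conclusion of that lemma.

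The only piece requiring extra work is (c). For $(i,j)\in\mathcal{X}_2$ one first checks $j-i\in[1,2k_2+M]\subseteq[1,k_1+k_2]$, so Lemma~\ref{lem4}(b) applies and yields $|S(i,j)|=n^2-n-\psi(j-i,-i,j)$. It then suffices to show $\psi(j-i,-i,j)=0$. If $j-i\in[1,k_1]$ this is Lemma~\ref{lem1}(b). If $j-i\in[k_1+1,k_1+k_2]$, the plan is to use the symmetry $S(a,b)=S(b,a)$, hence $\psi(j-i,-i,j)=\psi(j-i,j,-i)$, and then apply Lemma~\ref{lem6} with $m=j-i$, first inner index $j\in[-k_2,k_2]^*$, and second inner index $-i\in[k_2+1,k_2+M]\subseteq[-k_2,k_1]^*$. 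The side conditions $m-j=-i\le k_2+M\le k_1$ and $m-(-i)=j\le k_2$ both lie within $[-k_2,k_1+k_2]$ as required, so Lemma~\ref{lem6} delivers the vanishing.

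The main (and essentially only) obstacle is the bookkeeping for (c): a naive application of Lemma~\ref{lem6} fails because $-i\in[k_2+1,k_2+M]$ does not belong to $[-k_2,k_2]^*$, which is the range constraint on the first inner index of that lemma. The remedy is the elementary symmetry $\psi(m,a,b)=\psi(m,b,a)$, which lets one swap $-i$ and $j$ before invoking Lemma~\ref{lem6}. Everything else reduces to unpacking definitions and using $M\le k_1-k_2$.
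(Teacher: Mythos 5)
Your proposal is correct and follows essentially the same route as the paper, which disposes of all four parts as an immediate consequence of Lemma~\ref{lem4} (together with the symmetry of $S(\cdot,\cdot)$). The only substantive addition is that you explicitly verify the vanishing of $\psi(j-i,-i,j)$ in part (c) — via Lemma~\ref{lem1}(b) when $j-i\le k_1$, and via the symmetry $S(a,b)=S(b,a)$ plus Lemma~\ref{lem6} when $j-i\in[k_1+1,k_1+k_2]$ (a case that can occur since $2k_2+M$ may exceed $k_1$) — a detail the paper treats as immediate, so your write-up usefully fills in that bookkeeping.
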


\medskip

By Lemma \ref{lemma-new4}, we deduce the following: 

\begin{lemma}\label{la}
\[\sum_{(i,j)\in\mathcal{X}}|S(i,j)|\ge(\frac{(k_1-k_2+1)(k_1-k_2)}{2}+2Mk_2)(n^2-n)-(k_1-k_2)\frac{(C-1)n}{2}-\sum_{(i,j)\in\mathcal{X}_1}\psi(j-i,-i,j). \]
\end{lemma}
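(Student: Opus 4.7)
The plan is to partition $\mathcal{X} = \mathcal{X}_1 \cup \mathcal{X}_2$ according to which part of Lemma~\ref{lemma-new4} applies, bound each $|S(i,j)|$ accordingly, and then sum. Specifically, I would split $\mathcal{X}_1$ into the boundary subset $\mathcal{X}_1^{=} := \{(i,j) \in \mathcal{X}_1 : j - i = k_1 + k_2 + 1\}$, on which part (b) gives the weaker bound with the extra $\tfrac{(C-1)n}{2}$ correction, and the interior $\mathcal{X}_1 \setminus \mathcal{X}_1^{=}$, on which part (a) applies cleanly. On $\mathcal{X}_2$, part (c) gives the exact value $n^2 - n$ with no correction terms at all.

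The only real work is the combinatorial counting. For $\mathcal{X}_1$, I would reparametrize via $a = -i - k_2 \in [1, k_1 - k_2]$ and $b = j - k_2 \in [1, k_1 - k_2]$, so that $j - i = a + b + 2k_2$ and the constraint $j - i \le k_1 + k_2 + 1$ becomes $a + b \le k_1 - k_2 + 1$. A standard triangle count then gives $|\mathcal{X}_1| = \binom{k_1 - k_2 + 1}{2} = \tfrac{(k_1 - k_2)(k_1 - k_2 + 1)}{2}$, with exactly $|\mathcal{X}_1^{=}| = k_1 - k_2$ pairs on the boundary diagonal $a + b = k_1 - k_2 + 1$. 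For $\mathcal{X}_2$, the two coordinates are independent, so $|\mathcal{X}_2| = M \cdot 2k_2 = 2Mk_2$.

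Assembling the three pieces: every element of $\mathcal{X}$ contributes $n^2 - n$ to the sum, the $\tfrac{(C-1)n}{2}$ penalty is incurred exactly $|\mathcal{X}_1^{=}| = k_1 - k_2$ times, and the $\psi$-corrections, present in both cases (a) and (b) of Lemma~\ref{lemma-new4} for $\mathcal{X}_1$ and absent for $\mathcal{X}_2$, amalgamate into the single sum $\sum_{(i,j) \in \mathcal{X}_1} \psi(j-i, -i, j)$. This produces exactly the claimed inequality. There is no genuine obstacle here: the lemma is essentially a bookkeeping consequence of Lemma~\ref{lemma-new4} combined with the enumeration above. The one point requiring mild care is ensuring that the $\psi$-terms from parts (a) and (b) share the same functional form $\psi(j - i, -i, j)$ so that they combine without double-counting, which is immediate from the notation.
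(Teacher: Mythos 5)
Your proposal is correct and matches the paper's (largely implicit) proof: Lemma~\ref{la} is exactly the bookkeeping sum of the bounds in Lemma~\ref{lemma-new4}, with $|\mathcal{X}_1|=\tfrac{(k_1-k_2)(k_1-k_2+1)}{2}$, exactly $k_1-k_2$ boundary pairs with $j-i=k_1+k_2+1$ incurring the $\tfrac{(C-1)n}{2}$ penalty, and $|\mathcal{X}_2|=2Mk_2$. Your triangle count and the amalgamation of the $\psi(j-i,-i,j)$ terms are exactly what the paper intends, so there is nothing to add.
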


To find a bound for $|G'|$, take note that sets in  $\cup_{(i,j)\in \mathcal{Y}} S(i, j)$ are disjoint by assumption. 
Our next step is to compute 
\[\frac{1}{2}\sum_{(i_1,j_1)\in \mathcal{X}} \sum_{(i_1,j_1)\neq (i_2,j_2)\in \mathcal{X}} |S(i_1, j_1)\cap S(i_2, j_2)|,  \sum_{(i_1,j_1)\in \mathcal{X}} \sum_{(i_2,j_2)\in \mathcal{Y}} |S(i_1, j_1)\cap S(i_2, j_2)|
\mbox{ and } \]
\[ \sum_{(i_1,j_1)\in \mathcal{X}}  \sum_{\ell\in [-k_2, k_1]^* }|S(i_1, j_1)\cap T^{(\ell)}|.\]
Note that in the first sum above, we take only one half to avoid double counting of $|S(i_1, j_1)\cap S(i_2, j_2)|$. 
For $(i_1, j_1),  (i_2, j_2)\in \mathcal{X} \cup \mathcal{Y}$, we apply Lemma \ref{lem3} and obtain
\[ |S(i_1,j_1)\cap S(i_2,j_2)|\leq \left\{\begin{array}{ll}
\psi(j_1-i_2,-i_1,j_2)+\psi(j_2-i_1,-i_2,j_1)+\psi(j_2-i_1,j_1-i_2) \mbox{ if } j_1\neq i_2 \\
\psi(j_2-i_1,-i_2,j_1)+(n-1) \psi(j_2-i_1,0) \mbox{ if } j_1=i_2 
\end{array} \right. \]
and 
\[ |S(i_1,j_1)\cap T^{(\ell)}|= \psi(\ell-i_1, j_1).\]

Before we continue, we recall some results which we apply frequently in this section. 

\begin{lemma} \label{ex1}
For any $\ell$, 
\begin{itemize}
\item [(a)] $\sum_{i\in [-k_2, k_1]^*} \psi(\ell, i)+ \sum_{(i, j)\in \mathcal{Y }\cup \mathcal{Z}}  \psi(\ell,  i, j)\leq n $; and 
$\sum_{(i, j)\in \mathcal{Y }\cup \mathcal{Z}}  \psi(\ell,  i, j)=0$ if $\ell \in [-k_2, k_1]^*$. 
\item [(b)] $ \psi(\ell, i)=0$ if $i\in [-k_1, 0]$ and $0<\ell-i \leq k_1+k_2$. 
\item [(c)] $\sum_{i=1}^{k_1+k_2} \psi(\ell, u+i)\leq n$ if $u\in [-k_1, 0]$. 
\end{itemize}
\end{lemma}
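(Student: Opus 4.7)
The plan is to deduce all three parts of Lemma~\ref{ex1} directly from the partition of $G$ in Theorem~\ref{main} (2) together with the disjointness results already collected in Lemma~\ref{lem1} and Lemma~\ref{lem2}. There is essentially no new computation; each $\psi$-quantity is related to an appropriate disjoint family of subsets of $G$, and then one sums over $t \in T$.

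For part (a), I would fix $t \in T$ and use that, by Theorem~\ref{main} (2), the element $t^\ell \in G$ lies in exactly one of the pairwise disjoint pieces $\{e\}$, $T^{(i)}$ with $i \in [-k_2, k_1]^*$, or $S(i,j)$ with $(i,j) \in \mathcal{Y}\cup \mathcal{Z}$. Here I would first verify that $\mathcal{Y} \cup \mathcal{Z}$ coincides exactly with $\{(i,j) : i, j \in [-k_2, k_1]^*,\ i \le j\}$, the index set appearing in Theorem~\ref{main} (2). Summing this partition identity over all $t \in T$ yields
\[ \psi(\ell, 0) + \sum_{i\in [-k_2, k_1]^*} \psi(\ell, i) + \sum_{(i,j)\in \mathcal{Y}\cup \mathcal{Z}} \psi(\ell, i, j) = n, \]
and dropping the nonnegative term $\psi(\ell, 0)$ gives the first inequality. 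The second assertion of (a) is then immediate from Lemma~\ref{lem1} (b): if $\ell \in [-k_2, k_1]^*$, then $\psi(\ell, i, j) = 0$ for every $i, j \in [-k_2, k_1]^*$, so every term in the second sum vanishes.

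For parts (b) and (c), I would reduce to Lemma~\ref{lem2} (c). Since $[-k_1, 0] \subseteq [-k_1-k_2, 0]$, part (b) is precisely the ``in particular'' clause of Lemma~\ref{lem2} (c) restricted to this smaller range of $i$; recalling that $t^\ell \in T^{(\ell)}$ always and $T^{(\ell)} \cap T^{(i)} = \emptyset$ under the stated hypothesis on $\ell-i$, one obtains $\psi(\ell, i)=0$. For part (c), the same lemma tells us that $T^{(u)}, T^{(u+1)}, \ldots, T^{(u+k_1+k_2)}$ are pairwise disjoint for $u \in [-k_1-k_2, 0]$; restricting to $u \in [-k_1, 0]$, in particular the $k_1+k_2$ sets $T^{(u+1)}, \ldots, T^{(u+k_1+k_2)}$ are pairwise disjoint. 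Hence for each $t \in T$ the element $t^\ell$ belongs to at most one of them, and summing over $t$ gives $\sum_{i=1}^{k_1+k_2} \psi(\ell, u+i) \le n$.

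Since every step invokes an already-proved result, I do not anticipate a genuine obstacle. The only bookkeeping point is identifying $\mathcal{Y}\cup\mathcal{Z}$ with the canonical index set in Theorem~\ref{main} (2); once this is in place, the three parts are essentially immediate corollaries of earlier lemmas.
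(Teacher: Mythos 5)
Your proposal is correct and matches the paper's own (implicit) justification: the paper states Lemma~\ref{ex1} merely as a recollection of earlier results, and your derivation of (a) from Lemma~\ref{lem1}(b),(c) (after identifying $\mathcal{Y}\cup\mathcal{Z}$ with the index set $\{(i,j): i,j\in[-k_2,k_1]^*,\ i\le j\}$) and of (b),(c) from the disjointness in Lemma~\ref{lem2}(c),(d) is exactly the intended argument. No gaps.
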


\begin{lemma}\label{m1}
\[ S_1=\sum_{(i, j)\in \mathcal{X}}  \sum_{u \in [-k_2, k_1]^*}|S(i, j)\cap T^{(u)}|\leq \sum_{\ell=k_1+k_2+1}^{2k}
\sum_{j \in [k_2+1, k_1]} 2 c_1(\ell) \psi(\ell, j).\]
Here $c_1(\ell)=|\{ (-i, j)\in [k_2+1, k_1]\times [k_2+1, k_1]: j-i =\ell\}|$. 
\end{lemma}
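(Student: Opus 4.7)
The plan is to apply Lemma~\ref{lemma-new1} termwise to the inner summand, then reorganize the sum by setting $\ell:=u-i$. Lemma~\ref{lemma-new1} gives $|S(i,j)\cap T^{(u)}|=\psi(u-i,j)$ whenever $u-i\geq k_1+k_2+1$, and zero otherwise, and its hypotheses are met throughout $\mathcal{X}$ since every $(i,j)\in\mathcal{X}$ has $i\in[-k_1,-k_2-1]\subset[-k_1,k_2]^{*}$ and $j\in[-k_2,k_1]^{*}$. After the change of variables $\ell = u-i$ and swapping sums, the total reads
\[ S_1=\sum_{\ell\geq k_1+k_2+1}\sum_{j}N(\ell,j)\,\psi(\ell,j), \]
where $N(\ell,j)$ counts admissible pairs $(i,u)$ with $u-i=\ell$, $u\in[-k_2,k_1]^{*}$, and $(i,j)\in\mathcal{X}$.

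I would then split $\mathcal{X}=\mathcal{X}_1\cup\mathcal{X}_2$ and analyze $N(\ell,j)$ on each piece. For $(i,j)\in\mathcal{X}_1$, the inequalities $i\leq -k_2-1$, $\ell\geq k_1+k_2+1$, and $u\leq k_1$ together force $u\in[k_2+1,k_1]$; with $j\in[k_2+1,k_1]$ fixed, the count $N(\ell,j)$ equals the number of ordered pairs $(-i,u)\in[k_2+1,k_1]^{2}$ with $(-i)+u=\ell$, minus those excluded by the constraint $j-i\leq k_1+k_2+1$. Dropping the latter, one bounds this by $c_1(\ell)$ by definition, and the range of $\ell$ reaches $2k_1$. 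Hence the $\mathcal{X}_1$-portion is at most $\sum_{\ell=k_1+k_2+1}^{2k_1}c_1(\ell)\sum_{j\in[k_2+1,k_1]}\psi(\ell,j)$. The parallel computation for $\mathcal{X}_2$ (with $i\in[-k_2-M,-k_2-1]$, $j\in[-k_2,k_2]^{*}$) restricts $\ell$ to $[k_1+k_2+1,k_1+k_2+M]$ and yields a per-$\ell$ coefficient of $k_1+k_2+M+1-\ell$, which is at most $c_1(\ell)=2k_1-\ell+1$ throughout this interval because the assumption $M\leq k_1-k_2$ makes the comparison automatic.

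The main obstacle, and the step requiring care, is that the $\mathcal{X}_2$-portion naturally involves $\psi(\ell,j)$ for $j\in[-k_2,k_2]^{*}$, whereas the target bound only records $j\in[k_2+1,k_1]$. I would bridge this either by using the group automorphism $g\mapsto g^{-1}$, which sends $\psi(\ell,j)$ to $\psi(-\ell,-j)$ and permits reindexing the ``small'' range $[-k_2,k_2]^{*}$ into the ``large'' range $[k_2+1,k_1]$, or by bounding the $[-k_2,k_2]^{*}$-sum against the $[k_2+1,k_1]$-sum through a window estimate of the type in Lemma~\ref{ex1}(c). The resulting contribution is then absorbed by the second copy of $c_1(\ell)$ in $2c_1(\ell)$. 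Adding the $\mathcal{X}_1$-bound to this transformed $\mathcal{X}_2$-bound delivers the claimed inequality with coefficient $2c_1(\ell)$ in each $(\ell,j)$ slot, completing the proof.
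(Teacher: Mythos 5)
Your reduction via Lemma~\ref{lemma-new1}, the reindexing $\ell=u-i$, and the coefficient counts are sound: for the $\mathcal{X}_1$ part you recover exactly the paper's bound $\sum_{\ell=k_1+k_2+1}^{2k_1}c_1(\ell)\sum_{j\in[k_2+1,k_1]}\psi(\ell,j)$, and your count of the $\mathcal{X}_2$ multiplicity $k_1+k_2+M+1-\ell\le 2k_1-\ell+1=c_1(\ell)$ is correct (indeed more explicit than the paper, which simply asserts that the $\mathcal{X}_2$ sum is dominated by the $\mathcal{X}_1$ sum). The genuine gap is the step you yourself flag: converting the $\mathcal{X}_2$ contribution, which involves $\psi(\ell,j)$ with $j\in[-k_2,k_2]^{*}$, into the stated right-hand side where $j$ runs over $[k_2+1,k_1]$. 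Neither of your two bridges works. Inversion gives $\psi(\ell,j)=\psi(-\ell,-j)$, so it maps the symmetric range $[-k_2,k_2]^{*}$ onto itself (and replaces $\ell$ by $-\ell$, leaving the range $[k_1+k_2+1,2k_1]$ entirely); it does not reindex small $j$ into the large range. And Lemma~\ref{ex1}(c) only bounds a window sum of $\psi(\ell,\cdot)$ by $n$; it provides no comparison between the sum over $[-k_2,k_2]^{*}$ and the sum over $[k_2+1,k_1]$, which are a priori unrelated quantities. So, as written, your argument does not deliver the inequality with $j$ restricted to $[k_2+1,k_1]$.

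For perspective, the paper's own proof hides the same difficulty behind the word ``Clearly'', and the restriction on $j$ is immaterial for the only place the lemma is used: in Lemma~\ref{lc} the $j$-sum is immediately enlarged to $\sum_{u\in[-k_2,k_1]^{*}}\psi(\ell,u)+\sum_{(i,j)\in\mathcal{Y}\cup\mathcal{Z}}\psi(\ell,i,j)\le n$ via Lemma~\ref{ex1}(a). The clean repair is therefore either to state and prove the bound with $j$ ranging over $[-k_2,k_1]^{*}$ (your $\mathcal{X}_1$ and $\mathcal{X}_2$ estimates then add directly, using $k_1+k_2+M+1-\ell\le c_1(\ell)$), or to bound the $\mathcal{X}_2$ portion outright by $\sum_{\ell=k_1+k_2+1}^{k_1+k_2+M}c_1(\ell)\,n$ using Lemma~\ref{ex1}; either version suffices verbatim for the later application.
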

\begin{proof} For any $u\in [-k_2, k_1]^*$,
 $ |S(i,j)\cap T^{(u)}|= \psi(u-i, j)$.
Note that  $(i, j)\in \mathcal{X}$ and $j\in [k_2, k_1]^*$. Now, set $\ell=u-i$. 
By Lemma \ref{ex1} (b), $\psi(u-i, j)\neq 0$ only if $\ell =u-i\geq k_1+k_2+1$. 
As $k_2+1\leq -i\leq k_1$, $\ell\geq k_1+k_2+1$ only if $u\geq k_2+1$. 
Therefore, 
\[ \sum_{(i, j)\in \mathcal{X}_1}  \sum_{u \in [-k_2, k_1]^*}|S(i, j)\cap T^{(u)}|\leq \sum_{\ell=k_1+k_2+1}^{2k}
\sum_{j \in [k_2+1, k_2]^*} c_1(\ell) \psi(\ell, j).\]
Clearly, 
\[ \sum_{(i, j)\in \mathcal{X}_2}  \sum_{u \in [-k_2, k_1]^*}|S(i, j)\cap T^{(u)}|\leq  \sum_{(i, j)\in \mathcal{X}_1}  \sum_{u \in [-k_2, k_1]^*}|S(i, j)\cap T^{(u)}|\]
and our desired result follows. 
\end{proof}

\medskip

\begin{lemma} \label{l1}
	\begin{align*}
		S_2=&\frac{1}{2}\sum_{(i_1, j_1)\neq (i_2, j_2)\in \mathcal{X}_1} 
		\psi(j_1-i_2,-i_1,j_2)+\psi(j_2-i_1,-i_2,j_1)+
		\sum_{(i_1, j_1)\in \mathcal{X}_1} \psi(j_1-i_1,  -i_1, j_1)\\
		\leq&  \sum_{\ell=k_1+1}^{2k_1}  \sum_{(i,j)\in  [k_2+1, k_1]^2} 2c_1(\ell) \psi(\ell,  i, j).
	\end{align*}
Here $c_1(\ell)$ is as defined in Lemma \ref{m1} and 
$\sum_{\ell=k_1+1}^{2k_1} c_1(\ell)\leq (k_1-k_2)^2$.  
\end{lemma}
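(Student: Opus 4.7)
The plan is to first simplify $S_2$ via a label-swap symmetry, then reindex the outer summation by $\ell:=j_1-i_2$ and invoke Lemma~\ref{lem1}(b) to truncate the range. Swapping the labels $1\leftrightarrow 2$ is a bijection on the set of ordered pairs $(i_1,j_1)\neq(i_2,j_2)$ in $\mathcal{X}_1$ that interchanges the two summands $\psi(j_1-i_2,-i_1,j_2)$ and $\psi(j_2-i_1,-i_2,j_1)$. Hence the two inner sums are equal, and the pre-factor $\tfrac12$ keeps exactly one copy. The diagonal term $\sum_{(i_1,j_1)\in\mathcal{X}_1}\psi(j_1-i_1,-i_1,j_1)$ is precisely the $(i_1,j_1)=(i_2,j_2)$ case of the same summand, so combining gives the compact form
\[
S_2 \;=\; \sum_{(i_1,j_1),\,(i_2,j_2)\in\mathcal{X}_1}\psi(j_1-i_2,\,-i_1,\,j_2).
\]

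Next I would reindex the outer sum by $\ell:=j_1-i_2$. Since $(i_1,j_1),(i_2,j_2)\in\mathcal{X}_1$ forces $-i_2,\,j_1\in[k_2+1,k_1]$, we have $\ell\in[2k_2+2,\,2k_1]$. For any $\ell\in[-k_2,k_1]^{*}$, Lemma~\ref{lem1}(b) forces $\psi(\ell,-i_1,j_2)=0$ because $-i_1,j_2\in[k_2+1,k_1]^{*}\subseteq[-k_2,k_1]^{*}$; so only $\ell\in[k_1+1,2k_1]$ actually contributes. For such a fixed $\ell$, the number of pairs $(-i_2,j_1)\in[k_2+1,k_1]^2$ with $(-i_2)+j_1=\ell$ is precisely $c_1(\ell)$ by definition, while for each such choice $(-i_1,j_2)$ ranges over a subset of $[k_2+1,k_1]^2$. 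Bounding by the full square then yields
\[
S_2 \;\le\; \sum_{\ell=k_1+1}^{2k_1} c_1(\ell) \sum_{(i,j)\in[k_2+1,k_1]^2}\psi(\ell,i,j),
\]
which in particular implies the stated inequality with the constant $2c_1(\ell)$ (the extra factor of $2$ is convenient slack, likely reserved for later combined estimates).

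The auxiliary bound $\sum_{\ell=k_1+1}^{2k_1}c_1(\ell)\le(k_1-k_2)^2$ is a direct count: extending the sum to all $\ell$ enumerates every ordered pair in $[k_2+1,k_1]^2$ exactly once, contributing $(k_1-k_2)^2$ in total, and truncating at $\ell\ge k_1+1$ only shrinks this. I do not foresee a serious obstacle here; the only care required is to verify that the side condition $j-i\le k_1+k_2+1$ defining $\mathcal{X}_1$ merely restricts the admissible $(-i_1,j_2)$ to a subset of $[k_2+1,k_1]^2$, which is harmless for an upper bound, and to track the ordered-pair bookkeeping so that the symmetrization step and the reindexing step are compatible.
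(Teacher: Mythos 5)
Your proof is correct and follows essentially the same route as the paper: reindex by $\ell=j_1-i_2$, discard the range $\ell\le k_1$ via the vanishing of $\psi(\ell,i,j)$ for $\ell\in[-k_2,k_1]^*$ (Lemma \ref{lem1}(b), equivalently Lemma \ref{ex1}(a) used in the paper), and bound the multiplicity of each $\psi(\ell,-i_1,j_2)$ by $c_1(\ell)$. Your preliminary swap-symmetrization neatly absorbs the diagonal term and in fact gives the sharper coefficient $c_1(\ell)$ in place of $2c_1(\ell)$, so the stated inequality follows a fortiori.
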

\begin{proof} 
We set $\ell=j_1-i_2$. Note that $(-i_1, j_2), (-i_2, j_1)$ and $(-i_1, j_1)$ all lie in 
$\mathcal{Y}\cup \mathcal{Z}$. It then follows from 
Lemma \ref{ex1} (a), we only need to consider terms with $\ell \geq k_1+1$. 
Since $(-i_1, j_1), (-i_1, j_2), (-i_2, j_1)\in [k_2+1, k_1]$, 
and the number of pairs $(j_1, i_2)$ (resp. $(j_2, i_1), (j_1, i_1))$ with $j_1-i_2=\ell$  is at most  $c_1(\ell)$. We thus get our desired bound. 
Clearly, $\sum_{\ell=k_1+1}^{2k_1} c_1(\ell)\leq (k_1-k_2)^2$ as there are only $k_1-k_2$ choices for both $j_1$ and $i_2$. 
\end{proof}

\medskip

Using a similar argument as before, we obtain the following:

\begin{lemma} \label{l2}
	\begin{align*}
		S_3=&\sum_{(i_1, j_1)\in \mathcal{X}_1,  (i_2, j_2)\in \mathcal{X}_2}
		\psi(j_1-i_2,-i_1,j_2)+\psi(j_2-i_1,-i_2,j_1)\\
		\leq&   \sum_{\ell=k_1+1}^{2k_1}\sum_{(-i_1,j_2)\in  [k_2+1, k_1]\times [-k_2, k_2]^*}  c_2(\ell)\psi(\ell,  -i_1, j_2)+ 
		\sum_{\ell=k_1+1}^{2k_1}\sum_{(-i_2,j_1)\in  [k_2+1, k_2+M]\times [k_2+1, k_1]}c_3(\ell) \psi(\ell,  -i_2, j_1).
	\end{align*}
Here $c_2(\ell)=|\{ (-i_2,j_1) \in [k_2+1, k_2+M]\times [k_2+1, k_1] : j_1+i_2=\ell \}|$ and  $c_3(\ell)=|\{ (-i_1,j_2) \in [k_2+1, k_1]\times [-k_2, k_2]^* :j_2-i_1=\ell \}|$. Furthermore, 
\[\sum_{\ell=k_1+1}^{2k_1} c_2(\ell)\leq M(k_1-k_2)  
 \mbox{ and } \sum_{\ell=k_1+1}^{2k_1}   c_3(\ell)\leq (k_1-k_2)k_2.\]
\end{lemma}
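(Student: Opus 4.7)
The plan is to handle $S_3$ using the same bookkeeping strategy employed in the proof of Lemma~\ref{l1}, now adapted to the interaction of $\mathcal{X}_1$ with $\mathcal{X}_2$. I split $S_3$ into two pieces, one for each $\psi$-term, and in each piece reparameterize by the exponent $\ell$ appearing in the first slot of $\psi$ (so $\ell=j_1-i_2$ in the first sum and $\ell=j_2-i_1$ in the second). I then use Lemma~\ref{ex1}(a) to kill contributions with $\ell\in[-k_2,k_1]^*$, and finally count the number of pairs $(j_1,i_2)$ (resp.\ $(j_2,i_1)$) yielding a prescribed $\ell$ to extract the coefficients $c_2(\ell)$ and $c_3(\ell)$.

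For the first sum, note that $(i_1,j_1)\in\mathcal{X}_1$ gives $-i_1\in[k_2+1,k_1]$ and $j_1\in[k_2+1,k_1]$, while $(i_2,j_2)\in\mathcal{X}_2$ gives $-i_2\in[k_2+1,k_2+M]$ and $j_2\in[-k_2,k_2]^*$. Setting $\ell=j_1-i_2=j_1+(-i_2)$, one has $\ell\le k_1+k_2+M\le 2k_1$ since $M\le k_1-k_2$. The key point is that the index pair $(-i_1,j_2)$ sits in $[k_2+1,k_1]\times[-k_2,k_2]^*$; after swapping the two coordinates (and using $\psi(\ell,a,b)=\psi(\ell,b,a)$) it lies in $\mathcal{Y}$. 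Hence Lemma~\ref{ex1}(a) forces $\psi(\ell,-i_1,j_2)=0$ for $\ell\in[-k_2,k_1]^*$, restricting the sum to $\ell\ge k_1+1$. For each such $\ell$ the number of $(-i_2,j_1)$ with $j_1-i_2=\ell$ is exactly $c_2(\ell)$, and the factor $\psi(\ell,-i_1,j_2)$ is summed independently over $(-i_1,j_2)\in[k_2+1,k_1]\times[-k_2,k_2]^*$, giving the first double sum. The second sum is treated symmetrically: $\ell=j_2-i_1\in[1,k_1+k_2]$, and $(-i_2,j_1)$ has both entries in $[k_2+1,k_1]$, so after reordering it lies in $\mathcal{Z}$, and again Lemma~\ref{ex1}(a) restricts to $\ell\ge k_1+1$, with the coefficient $c_3(\ell)$ counting the number of $(-i_1,j_2)$ realizing $j_2-i_1=\ell$.

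For the global counts, $\sum_\ell c_2(\ell)\le M(k_1-k_2)$ is immediate since the full domain of $(-i_2,j_1)$ has exactly $M(k_1-k_2)$ pairs. For $c_3$, the restriction $\ell\ge k_1+1$ forces $j_2\ge(k_1+1)-(-i_1)\ge (k_1+1)-k_1=1$, so only $j_2\in[1,k_2]$ contributes and $\sum_{\ell\ge k_1+1}c_3(\ell)\le (k_1-k_2)k_2$. There is no genuine obstacle here; the argument is essentially bookkeeping. The only step requiring care is verifying that the reordered index pairs $(-i_1,j_2)$ and $(-i_2,j_1)$ actually land in $\mathcal{Y}\cup\mathcal{Z}$ so that Lemma~\ref{ex1}(a) applies, and—for $c_3$—remembering that the $\ell\ge k_1+1$ cut halves the admissible range of $j_2$, producing the factor $k_2$ rather than $2k_2$.
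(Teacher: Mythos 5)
Your proof is correct and follows essentially the same bookkeeping argument the paper intends (the paper gives no separate proof here, deferring to the arguments of Lemmas~\ref{m1} and~\ref{l1}): reparameterize by $\ell$, discard $\ell\le k_1$ via Lemma~\ref{ex1}(a) after reordering the spectator index pairs into $\mathcal{Y}\cup\mathcal{Z}$, and bound the multiplicities by $c_2(\ell)$, $c_3(\ell)$. Note that you (correctly) read the defining condition of $c_2(\ell)$ as $j_1-i_2=\ell$; the printed ``$j_1+i_2=\ell$'' is evidently a typo, since under that literal reading $c_2(\ell)=0$ for every $\ell\ge k_1+1$.
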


\begin{lemma} \label{l3}
	\begin{align*}
		S_4=&\sum_{(i_1, j_1)\in \mathcal{X}_1,  (i_2, j_2)\in \mathcal{Y}}
		\psi(j_1-i_2,-i_1,j_2)+\psi(j_2-i_1,-i_2,j_1) \\
		\leq&\sum_{\ell=k_1+1}^{2k_1}  \sum_{(-i_1,j_2)\in  [k_2+1, k_1] \times [-k_2, k_1]^*} c_3(\ell)\psi(\ell,  -i_1, j_2)+ 
		\sum_{\ell=k_1+1}^{2k_1} \sum_{(i_2,j_1)\in   [-k_2, k_2]^* \times [k_2+1, k_1]} (c_1(\ell)+c_3(\ell)) \psi(\ell,  -i_2, j_1) .
	\end{align*}
Here  $c_3(\ell)$ is defined as in  Lemma \ref{l2}.
\end{lemma}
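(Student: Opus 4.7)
The plan is to mimic the proofs of Lemmas \ref{m1}, \ref{l1}, and \ref{l2}: split $S_4$ into its two natural pieces, reindex each by setting $\ell$ equal to the first argument of the $\psi$, invoke Lemma \ref{lem1}(b) to discard shifts $\ell$ that force the $\psi$ to vanish, and finally count, for each fixed triple $(\ell,\cdot,\cdot)$, how many outer-sum pairs contribute that triple -- using only the sizes of the relevant index sets, which is precisely what $c_1(\ell)$ and $c_3(\ell)$ encode.

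Write $S_4 = S_4' + S_4''$, where $S_4'$ collects the $\psi(j_1-i_2,-i_1,j_2)$ summands and $S_4''$ the $\psi(j_2-i_1,-i_2,j_1)$ summands. For $S_4'$, set $\ell = j_1 - i_2$. The arguments of $\psi$ are $(-i_1, j_2)$ with $-i_1 \in [k_2+1,k_1]$ and $j_2 \in [-k_2,k_1]^*$, both lying in $[-k_2,k_1]^*$, so by Lemma \ref{lem1}(b) the term vanishes unless $\ell \notin [-k_2,k_1]^*$. Since $\ell \in [1, k_1+k_2]$ here, only $\ell \in [k_1+1, k_1+k_2] \subseteq [k_1+1, 2k_1]$ survives. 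For fixed $\ell$ and fixed $(-i_1,j_2)$, the multiplicity with which the term $\psi(\ell,-i_1,j_2)$ appears equals the number of $(j_1, -i_2) \in [k_2+1,k_1]\times[-k_2,k_2]^*$ satisfying $j_1 + (-i_2)=\ell$, which, after the harmless substitution and using that $[-k_2,k_2]^*$ is stable under negation, matches the defining count of $c_3(\ell)$. This yields the first term in the bound.

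For $S_4''$, set $\ell = j_2 - i_1$; the arguments of $\psi$ are $(-i_2,j_1) \in [-k_2,k_2]^*\times[k_2+1,k_1]$, and the same vanishing argument restricts $\ell$ to $[k_1+1,2k_1]$ (it is nonzero because $j_2 - i_1 \geq -k_2 + (k_2+1) = 1$). For fixed $\ell$ and fixed $(-i_2, j_1)$, count the pairs $(i_1,j_2)\in[-k_1,-k_2-1]\times[-k_2,k_1]^*$ with $(-i_1)+j_2=\ell$, splitting according to whether $j_2\in[-k_2,k_2]^*$ (contributing $c_3(\ell)$ solutions via the defining count) or $j_2\in[k_2+1,k_1]$ (contributing $c_1(\ell)$ solutions). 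Summing produces the multiplicity $c_1(\ell)+c_3(\ell)$ and hence the second term in the bound.

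I do not foresee any conceptual obstacle: the argument is a systematic bookkeeping exercise completely parallel to Lemmas \ref{l1} and \ref{l2}. The only point needing care is to keep $S_4'$ and $S_4''$ separate, since they feed into different weighted sums of $\psi$-values with different constants $c_3(\ell)$ versus $c_1(\ell)+c_3(\ell)$, and to verify that in each sub-sum the shift $\ell$ is indeed forced into $[k_1+1,2k_1]$ so that Lemma \ref{lem1}(b) discards exactly the right triples without any undercounting.
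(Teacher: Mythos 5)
Your proposal is correct and follows essentially the same route as the paper: split the double sum into the two $\psi$-families, reindex by the shift $\ell$ (forced into $[k_1+1,2k_1]$ by the vanishing of $\psi(\ell,i,j)$ for $\ell\in[-k_2,k_1]^*$), and bound the multiplicity of each fixed $\psi(\ell,\cdot,\cdot)$ by the counts $c_3(\ell)$ and $c_1(\ell)+c_3(\ell)$ of solutions to $j_1-i_2=\ell$ and $j_2-i_1=\ell$ in the relevant index boxes. In fact your write-up is more detailed than the paper's own sketch, which only records the key observation $|\{(-i_1,j_2)\in[k_2+1,k_1]\times[-k_2,k_1]^*: j_2-i_1=\ell\}|=c_1(\ell)+c_3(\ell)$.
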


\medskip

The proof of Lemma \ref{l3} is similar to the proofs shown before. Take note that for $(i_1, j_1)\in \mathcal{X}_1,  (i_2, j_2)\in \mathcal{Y}$,
$j_2-i_1\geq 1$. Again, we set $\ell=j_2-i_1$ and we only need to consider $\ell \geq k_1+1$. 
Therefore, it is sufficient to consider $j_2\in [1, k_1]$. 
Note also that $|\{ (-i_1,j_2) \in  [k_2+1, k_1]\times [-k_2, k_1]^*: j_2-i_1=\ell \}|
=c_1(\ell)+c_3(\ell)$. 

\medskip

\begin{lemma} \label{l4}
	\begin{align*}
		S_5=&\sum_{(i_1, j_1)\in \mathcal{X}_2,  (i_2, j_2)\in \mathcal{Y}} 
		\psi(j_1-i_2,-i_1,j_2)+\psi(j_2-i_1,-i_2,j_1)\\
		\leq &   k_2(k_2+1)n+\sum_{\ell=k_1+1}^{2k_1} \sum_{(-i_2,j_1)\in   [-k_2, k_2]^* \times [-k_2, k_2]^*} 
		c_2(\ell) \psi(\ell,  -i_2, j_1).
	\end{align*}
\end{lemma}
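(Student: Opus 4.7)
The plan is to split $S_5=S_5^{(1)}+S_5^{(2)}$ according to its two $\psi$-summands and bound each piece using the blueprint already set out in Lemmas~\ref{l1}--\ref{l3}. Here $S_5^{(1)}$ collects the terms $\psi(j_1-i_2,-i_1,j_2)$ and $S_5^{(2)}$ collects the terms $\psi(j_2-i_1,-i_2,j_1)$, and the two summands on the right-hand side of the stated bound correspond to these, respectively.

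For $S_5^{(2)}$, I reindex by $\ell=j_2-i_1$. Since $-i_1\in[k_2+1,k_2+M]$ and $j_2\in[-k_2,k_1]^*$, one has $\ell\in[1,k_1+k_2+M]$, and Lemma~\ref{lem1}(b) forces $\psi(\ell,-i_2,j_1)=0$ unless $\ell\ge k_1+1$. As $M\le k_1-k_2$, the effective range sits inside $[k_1+1,2k_1]$. Exchanging summation order, I fix $(\ell,-i_2,j_1)$ with $-i_2,j_1\in[-k_2,k_2]^*$ and count the admissible $(i_1,j_2)$ with $-i_1+j_2=\ell$. Because $\ell\ge k_1+1$ and $-i_1\le k_2+M\le k_1$, the value $j_2=\ell-(-i_1)$ is automatically at least $1$; I then expect the count to be bounded by $c_2(\ell)$ (pairs with $j_2\in[1,k_2]$ being handled either by the $\mathcal{Y}$-constraint $j_2\ge i_2$ or by the vanishing supplied by Lemma~\ref{lem6}), which yields the second summand in the bound.

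For $S_5^{(1)}$, I reindex by $\ell=j_1-i_2\in[-2k_2,2k_2]$. Lemma~\ref{lem1}(b) together with the fact that $e\notin S(-i_1,j_2)$ from Lemma~\ref{lem4}(a) kills every $\ell\in[-k_2,k_1]$, leaving only $\ell\in[-2k_2,-k_2-1]\cup[k_1+1,2k_2]$ (the latter interval is empty when $k_1\ge 2k_2$). For each such $\ell$, Lemma~\ref{ex1}(a) yields $\sum_{(-i_1,j_2)}\psi(\ell,-i_1,j_2)\le n$. A direct enumeration, parametrizing $\ell=-(k_2+r)$ and $\ell=k_2+r$ for $r\ge 1$, shows that the total number of admissible $(j_1,i_2)\in[-k_2,k_2]^*\times[-k_2,k_2]^*$ producing a contributing $\ell$ is at most $k_2(k_2+1)$ (the bound is attained only when $k_1=k_2+1$, and drops to $k_2(k_2+1)/2$ once $k_1\ge 2k_2$). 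Multiplying by $n$ gives $S_5^{(1)}\le k_2(k_2+1)n$.

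The main obstacle is the fine analysis for $S_5^{(2)}$ in the boundary range $\ell\in[k_1+1,k_1+k_2+1]$, where $j_2$ can fall into $[1,k_2]$ and the naive count of $(i_1,j_2)$-pairs may momentarily exceed $c_2(\ell)$. Verifying that the surplus contributions are killed by Lemma~\ref{lem6} (which, for $\ell$ near $k_1+1$, forces $\psi(\ell,a,b)=0$ unless $a$ or $b$ equals $-k_2$), rather than having to be absorbed into a larger coefficient of the $k_2(k_2+1)n$ term, is the delicate step that has to be matched precisely against the stated bound.
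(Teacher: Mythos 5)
Your split $S_5=S_5^{(1)}+S_5^{(2)}$ is exactly the structure the paper intends: its written proof handles only the terms $\psi(j_1-i_2,-i_1,j_2)$, reindexing by $\ell=j_1-i_2\le -k_2-1$, counting the $(i_2,j_1)$-pairs by $c_4(\ell)$ with $\sum_{\ell=-2k_2}^{-k_2-1}c_4(\ell)\le k_2(k_2+1)/2$, and bounding the inner sum by $2n$ (an unordered set $S(i,j)$ can occur twice among the ordered pairs $(-i_1,j_2)$); the $c_2(\ell)$ part is left to the ``similar argument'' of Lemmas~\ref{l2}--\ref{l3}. Your treatment of $S_5^{(1)}$ is close to this, and you even include the window $\ell\in[k_1+1,2k_2]$ (nonempty when $k_1<2k_2$) that the paper's proof silently drops; but you use an inner bound of $n$ where $2n$ is what Lemma~\ref{ex1}(a) actually yields for an ordered sum, so with your pair count ``at most $k_2(k_2+1)$'' the corrected conclusion is $2k_2^2n$, not $k_2(k_2+1)n$. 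To recover the stated constant you need the paper's finer count $k_2(k_2+1)/2$ for the negative window together with a separate accounting of the positive window.

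The genuine gap is the one you flag but do not close for $S_5^{(2)}$. For fixed $(\ell,-i_2,j_1)$ the multiplicity is the number of $a=-i_1\in[k_2+1,k_2+M]$ with $j_2=\ell-a\in[-k_2,k_1]^*$ and $j_2\ge i_2$, and for $k_1+1\le\ell\le 2k_2+M$ this genuinely exceeds $c_2(\ell)$, while neither mechanism you invoke removes the surplus: Lemma~\ref{lem6} leaves open precisely the triples with $-i_2\le\ell-k_1-k_2-1$ or $j_1\le\ell-k_1-k_2-1$, and if $j_1=-k_2$ while $i_2$ is negative the constraint $j_2\ge i_2$ is vacuous. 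Concretely, take $k_1=5$, $k_2=3$, $M=1$ (so $[9,9]$ consists of composites), $\ell=6$, $(-i_2,j_1)=(3,-3)$: here $c_2(6)=0$, yet $(i_1,j_1)=(-4,-3)\in\mathcal{X}_2$ and $(i_2,j_2)=(-3,2)\in\mathcal{Y}$ contribute $\psi(6,3,-3)$ to $S_5^{(2)}$, and no lemma in the paper forces $\psi(6,3,-3)=0$, since $6-(-3)=9>k_1+k_2$ so Lemma~\ref{lem6} does not apply. Hence the inequality $S_5^{(2)}\le\sum_{\ell}\sum c_2(\ell)\psi(\ell,-i_2,j_1)$ that you ``expect'' cannot be obtained by the route you describe; such surplus terms would have to be shown to vanish by a different argument or be absorbed into the $k_2(k_2+1)n$ slack, which is exactly what your plan declines to do. As written, the proposal does not establish the stated bound.
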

\begin{proof} In this case, $-2k_2\leq j_2-i_1\leq 2k_2$. Again, by Lemma \ref{ex1} (a), we only need to consider 
$\ell=j_2-i_1\leq -k_2-1$. Let $c_4(\ell)=|\{ (i_2,j_1) \in [-k_2, k_2]^*\times [-k_2, k_2]^*: j_1-i_2=\ell \}|$. Then
\[ \sum_{\ell=-2k_2}^{-k_2-1}  c_4(\ell)\leq \frac{k_2(k_2+1)}{2}.\]
Note that for $j_1-i_2\leq -k_2-1$, $j_1<0$,  and the number of choices for $i_1$ is $k_2-j_1$. 
However, in the sum, 
\[  \sum_{(-i_1,j_2)\in  [k_2+1, k_1] \times [k_2+1, k_1]} \sum_{\ell=-2k_2}^{-k_2-1} c_4(\ell)\psi(\ell,  -i_1, j_2),\]
a term $\psi(\ell, i, j)$ may appear twice in the sum above. Therefore we have
\begin{align*}
	&\sum_{(-i_1,j_2)\in  [k_2+1, k_1] \times [k_2+1, k_1]} \sum_{\ell=-2k_2}^{-k_2-1} c_4(\ell)\psi(\ell,  -i_1, j_2)\\
	\leq&\sum_{\ell=-2k_2}^{-k_2-1} c_4(\ell)
	\sum_{(i, j)\in \mathcal{Y}\cup \mathcal{Z}} 2 \psi(\ell,  -i_1, j_2)\\
	\leq&2n\sum_{\ell=-2k_2}^{-k_2-1} c_4(\ell)\\
	\leq&k_2(k_2+1)n.
\end{align*}
\end{proof}

\begin{lemma} \label{lc}
\begin{equation} \label{3eq}
\sum_{i=1}^5 S_i \leq (2M+4k_2)(k_1-k_2)n+4n(k_1-k_2)^2+  k_2(k_2+1)n.
\end{equation} 
\end{lemma}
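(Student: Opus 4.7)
}
My plan is to substitute the upper bounds from Lemmas \ref{m1}--\ref{l4} into $\sum_{i=1}^{5} S_i$ and then, for each fixed $\ell\in[k_1+1,2k_1]$, collapse the inner sums over index pairs by invoking Lemma \ref{ex1}(a), which states
\[
\sum_{i\in [-k_2, k_1]^*}\psi(\ell, i)+\sum_{(i, j)\in \mathcal{Y}\cup \mathcal{Z}}\psi(\ell,  i, j)\le n.
\]
After this collapse, every $\ell$-level contribution becomes a multiple of $n$, and the overall bookkeeping reduces to summing $\sum_{\ell} c_s(\ell)$ for $s=1,2,3$; these are controlled respectively by $(k_1-k_2)^2$, $M(k_1-k_2)$, and $k_2(k_1-k_2)$, as recorded in Lemmas \ref{m1}, \ref{l2}, and \ref{l3}.

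The key observation required to match the claimed coefficients is that $S_1$ and $S_2$ should be merged \emph{before} invoking Lemma \ref{ex1}(a): they carry the common coefficient $2c_1(\ell)$, but $S_1$ only involves the single-index counters $\psi(\ell,j)$ while $S_2$ only involves the double-index counters $\psi(\ell,i,j)$. These counters sit in the two disjoint halves of the Lemma \ref{ex1}(a) inequality, so
\[
S_1+S_2 \le \sum_{\ell=k_1+1}^{2k_1} 2c_1(\ell)\Bigl(\sum_{j}\psi(\ell,j)+\sum_{(i,j)}\psi(\ell,i,j)\Bigr)\le 2n(k_1-k_2)^2,
\]
instead of the looser $4n(k_1-k_2)^2$ one would obtain by bounding each separately. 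For $S_3$, $S_4$, and $S_5$, each constituent half already carries a single coefficient in $\{c_1(\ell),c_2(\ell),c_3(\ell)\}$, so a direct application of Lemma \ref{ex1}(a) at each $\ell$ and summation over $\ell$ yields bounds of the shape $n\sum_\ell c_s(\ell)$, and the leftover $k_2(k_2+1)n$ term in $S_5$ is inherited verbatim from Lemma \ref{l4}.

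Adding these contributions gives an overall bound of the form $An(k_1-k_2)^2+B(k_1-k_2)n+k_2(k_2+1)n$ that is dominated term-by-term by the stated expression; writing it out, the claimed bound $4n(k_1-k_2)^{2}+(2M+4k_2)(k_1-k_2)n+k_2(k_2+1)n$ follows. The main obstacle I anticipate is purely the bookkeeping of index sets: one must verify, for each of the five sums, that the unordered pairs $\{-i,j\}$ appearing in the bounds of Lemmas \ref{m1}--\ref{l4} can indeed be sorted into members of $\mathcal{Y}\cup\mathcal{Z}$ so that Lemma \ref{ex1}(a) applies, and that the symmetry $\psi(\ell,i,j)=\psi(\ell,j,i)$ is handled consistently (duplication from both $(a,b)$ and $(b,a)$ appearing in a sum must either be absorbed into a factor of $2$ in the coefficient or already accounted for by our $c_s(\ell)$).
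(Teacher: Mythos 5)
There is a genuine gap, and it sits exactly in the bookkeeping you defer at the end. First, your claimed improvement $S_1+S_2\le 2n(k_1-k_2)^2$ is not justified by the lemmas you cite: the bound in Lemma \ref{l1} sums $2c_1(\ell)\psi(\ell,i,j)$ over \emph{ordered} pairs $(i,j)\in[k_2+1,k_1]^2$, and since $\psi(\ell,i,j)=\psi(\ell,j,i)$ each unordered pair of $\mathcal{Z}$ is hit up to twice; so for a fixed $\ell$ the combined inner sum $\sum_{j}\psi(\ell,j)+\sum_{(i,j)\in[k_2+1,k_1]^2}\psi(\ell,i,j)$ is only bounded by $2n$, not $n$, and your route yields $4n(k_1-k_2)^2$ for $S_1+S_2$ (this is precisely why the paper's proof uses the coefficient $4c_1(\ell)$ rather than $2c_1(\ell)$). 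Second, your statement that each constituent half of $S_3,S_4,S_5$ carries a single coefficient is false for $S_4$, whose second half carries $c_1(\ell)+c_3(\ell)$; its $c_1$-part contributes a further $n\sum_\ell c_1(\ell)\le n(k_1-k_2)^2$.

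Once these two points are corrected, your plan of bounding each lemma's contribution separately no longer gives the stated constants: the $c_1$-ledger becomes $4n(k_1-k_2)^2+n(k_1-k_2)^2=5n(k_1-k_2)^2$, and the two $c_2$-sums (the one from $S_3$ with pairs in $[k_2+1,k_1]\times[-k_2,k_2]^*$, multiplicity $1$, and the one from $S_5$ with ordered pairs in $[-k_2,k_2]^*\times[-k_2,k_2]^*$, multiplicity up to $2$) give $3M(k_1-k_2)n$ rather than $2M(k_1-k_2)n$, so term-by-term domination by the right-hand side fails. The paper's proof avoids this by grouping, for each $\ell$, \emph{all} summands sharing the same coefficient across $S_1,\dots,S_5$ — the $c_1(\ell)$-terms from $S_1$, $S_2$ and $S_4$, the $c_2(\ell)$-terms from $S_3$ and $S_5$, the $c_3(\ell)$-terms from $S_3$ and $S_4$ — into one application of Lemma \ref{ex1}(a), using that the single indices, the $\mathcal{Z}$-type pairs and the $\mathcal{Y}$-type pairs are disjoint families, and then checking the worst-case multiplicities ($4$ for $c_1$, $2$ for $c_2$, $4$ for $c_3$). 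In short, the grouping across the five sums is not an optional refinement but the mechanism that produces the coefficients $4$, $2M+4k_2$ and $k_2(k_2+1)$; your decomposition (merge only $S_1$ with $S_2$, treat everything else half-by-half) does not deliver the inequality as stated.
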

\begin{proof} Note that 
	\begin{align*}
		&S_1+S_2+ \sum_{\ell=k_1+1}^{2k_1} \sum_{(i_2,j_1)\in   [-k_2, k_2]^* \times [k_2+1, k_1]} 2c_1(\ell)\psi(\ell,  -i_2, j_1)\\
		\leq&\sum_{\ell=k_1+1}^{2k_1} 4c_1(\ell) (\sum_{u\in [-k_2, k_1]^*} \psi(\ell, u)+\sum_{(i, j)\in  \mathcal{Y} \cup \mathcal{Z}} \psi(\ell , i, j))\\
		\leq&4n \sum_{\ell=k_1+1}^{2k_1} c_1(\ell)\\
		\leq&4n(k_1-k_2)^2.
	\end{align*}

Next, we collect all summands in the sum $S_3+S_4+S_5$ that involve $c_2(\ell)$. 
\begin{align*}
	&\sum_{\ell=k_1+1}^{2k_1}\sum_{(-i_1,j_2)\in  [k_2+1, k_1]\times [-k_2, k_2]^*} 
	c_2(\ell)\psi(\ell,  -i_1, j_2)+ \sum_{(-i_2,j_1)\in   [-k_2, k_2]^* \times [-k_2, k_2]^*}  c_2(\ell)\psi(\ell,  -i_1, j_2)\\
	\leq& \sum_{\ell=k_1+1}^{2k_1}   2c_2(\ell) \sum_{(i, j)\in \mathcal{Y} \cup \mathcal{Z}} \psi(\ell , i, j)\\
	\leq&2n \sum_{\ell=k_1+1}^{2k_1}   c_2(\ell)\\
	\leq&M(k_1-k_2)  2n.
\end{align*}
Note that a term of the form $\psi(\ell, i, j)$ appear at most twice in the sum above as
$(i, j)$ could be $=(-i_1, j_2)$ or $(j_2, -i_1)$. 
Lastly, we collect summands involve $c_3(\ell)$ in $S_3+S_4+S_5$. 
\begin{align*}
	&\sum_{\ell=k_1+1}^{2k_1}\sum_{(-i_2,j_1)\in  
		[k_2+1, k_1] \times [-k_2, k_1]^*}c_3(\ell) \psi(\ell,  -i_2, j_1)+\sum_{\ell=k_1+1}^{2k_1} \sum_{(i_2,j_1)\in   [-k_2, k_2]^* \times [k_2+1, k_1]} c_3(\ell)) \psi(\ell,  -i_2, j_1)\\
		\leq &\sum_{\ell=k_1+1}^{2k_1} 4c_3(\ell)\sum_{(i, j)\in J} \psi(\ell , i, j)\\
		\leq& 4(k_1-k_2)k_2n.
\end{align*}
As $j_2-i_1\geq k_1+1$, $j_2\geq 1$. For each $j_2$, there are at most $(k_1-k_2+M)$ choices. Therefore $ \sum_{\ell=k_1+1}^{2k_1} c_2(\ell)\leq k_1(k_1-k_2+M)$. 
We then use a similar argument to check that 
$\phi(\ell, -i_1, j_2)$ appears at most three times in the sum. \end{proof}

To compute the sum involving $\psi(j_1-i_2, j_2-i_1)$, it is more convenient to break the summands into three subsums.

\begin{lemma} \label{ld}
	\begin{align*}
	&\frac{1}{2}\sum_{(i_1, j_1)\in \mathcal{X}_1} \sum_{(i_1, j_1)\neq (i_2, j_2)\in \mathcal{X}_1} \psi(j_1-i_2,  j_2-i_1)+\sum_{(i_1, j_1)\in \mathcal{X}_1} \sum_{(i_2, j_2)\in \mathcal{X}_2} \psi(j_1-i_2,  j_2-i_1)\\
	\leq& \frac{1}{2}(k_1-k_2)^2(k_1-k_2+1)n.
	\end{align*}
\end{lemma}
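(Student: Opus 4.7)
My plan is to bound the two sums separately by organizing both according to the first argument $\ell = j_1 - i_2$ of $\psi$, and invoking disjointness of the sets $T^{(v)}$ from Lemma~\ref{ex1}(c).

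For the mixed sum over $\mathcal{X}_1 \times \mathcal{X}_2$: I fix $(i_1, j_1) \in \mathcal{X}_1$ and $i_2 \in [-k_2-M, -k_2-1]$, so that $\ell = j_1 - i_2$ is determined. As $j_2$ ranges over $[-k_2, k_2]^{*}$, the second argument $v = j_2 - i_1$ takes values in $[-i_1 - k_2, -i_1 + k_2] \setminus \{-i_1\}$. Since $-i_1 \in [k_2+1, k_1]$, this range lies inside $[1, k_1+k_2]$, within which the $T^{(v)}$ are pairwise disjoint by Lemma~\ref{ex1}(c) (with $u = 0$); thus $\sum_{j_2} \psi(\ell, j_2 - i_1) \leq n$. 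Summing over the $M$ choices of $i_2$ and over $(i_1, j_1) \in \mathcal{X}_1$ (of which there are $\tfrac{(k_1-k_2)(k_1-k_2+1)}{2}$) bounds the mixed sum by $\tfrac{(k_1-k_2)(k_1-k_2+1)}{2} M n$.

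For the first sum over $\mathcal{X}_1 \times \mathcal{X}_1$: the key identity
\[
\ell + v = (j_1 - i_2) + (j_2 - i_1) = (j_1 - i_1) + (j_2 - i_2) \leq 2(k_1+k_2+1),
\]
inherited from the defining inequalities of $\mathcal{X}_1$, forces at most one of $\ell, v$ to exceed $k_1+k_2+1$. I split the inner sum over $v$ into two regimes: (a) $v \in [2k_2+2, k_1+k_2]$, for which $T^{(v)}$ are disjoint and $\sum_v \psi(\ell, v) \leq n$; and (b) the tail $v \in [k_1+k_2+1, 2(k_1+k_2+1) - \ell]$, where each $\psi(\ell, v)$ must be bounded pointwise by $n$. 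Weighting by the multiplicity $c(v) \leq k_1 - k_2$ of $(j_2, i_1)$-pairs producing $v$, and then by the multiplicity of $(j_1, i_2)$-pairs producing $\ell$, I sum across admissible $\ell$ using the coupling constraint $(i_1, j_1), (i_2, j_2) \in \mathcal{X}_1$ (which reduces the total count of quadruples from $(k_1-k_2)^4$ to $\binom{(k_1-k_2)(k_1-k_2+1)/2}{1}^2$).

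\textbf{Main obstacle.} The tail regime (b) in the first sum is the crux: for $v > k_1+k_2$ the sets $T^{(v)}$ need not be disjoint, forcing the trivial bound $\psi(\ell, v) \leq n$. The identity $\ell + v \leq 2(k_1+k_2+1)$ is indispensable here, since it strictly limits the tail length to $k_1+k_2+2-\ell$ and prevents a blow-up of order $(k_1-k_2)^4 n$. Combining the two sums with the hypothesis $M \leq k_1-k_2$ and simplifying will then produce the stated bound $\tfrac{1}{2}(k_1-k_2)^2(k_1-k_2+1) n$.
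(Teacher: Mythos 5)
Your plan bounds the two double sums separately, and that is exactly where it cannot close: your bound for the mixed sum, $\tfrac{1}{2}(k_1-k_2)(k_1-k_2+1)\,M\,n$ (which is correct), already equals the \emph{entire} right-hand side when $M=k_1-k_2$, and your bound for the $\mathcal{X}_1\times\mathcal{X}_1$ sum, however you finish regimes (a) and (b), will be of the same order of magnitude as the target (it is essentially $\sum_{\ell\ge k_1+k_2+1}c_1(\ell)(k_1-k_2)n\approx\tfrac{1}{2}(k_1-k_2)^2(k_1-k_2+1)n$ even before the tail terms). So the sum of your two separate bounds is roughly twice the claimed quantity, and no amount of "combining and simplifying with $M\le k_1-k_2$" can recover the stated constant; that last step is asserted but not carried out, and it fails. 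The paper's proof avoids this loss by \emph{sharing one budget of $n$ between the two sums}: for each fixed $i_1$ and each fixed value $\ell=j_1-i_2\ge k_1+k_2+1$ (counted with multiplicity $c_1(\ell)$, which also dominates the $\mathcal{X}_2$ multiplicity since $M\le k_1-k_2$), the inner sums over $j_2$ coming from the $\mathcal{X}_1$-part (second arguments in $[k_2+1-i_1,\,k_1+k_2]$) and from the $\mathcal{X}_2$-part (second arguments in $[-k_2-i_1,\,k_2-i_1]$) are merged into a single run of consecutive exponents inside $[-k_2-i_1,\,k_1+k_2]\subset[1,k_1+k_2]$, of length at most $k_1+k_2$, so Lemma~\ref{ex1}(c) bounds the \emph{combined} inner sum by $n$, not $2n$; then $\sum_{\ell\ge k_1+k_2+1}c_1(\ell)\le\tfrac{(k_1-k_2)(k_1-k_2+1)}{2}$ yields the constant.

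A second, smaller defect is your tail regime (b). The paper never needs it: the factor $\tfrac12$ lets one assume $j_1-i_2\ge j_2-i_1$, and then $\ell=j_1-i_2\ge k_1+k_2+1$ together with the identity $\ell+v=(j_1-i_1)+(j_2-i_2)\le 2(k_1+k_2+1)$ forces $v=j_2-i_1\le k_1+k_2$, so every surviving second argument sits inside the disjointness window and there is no tail at all (terms with $\ell\le k_1+k_2$ are discarded via Lemma~\ref{ex1}). Your version, which keeps both orderings and bounds tail terms pointwise by $n$ with multiplicities $c(v)\le k_1-k_2$, introduces extra contributions of order $(k_1-k_2)^2 n$ that the stated inequality has no room to absorb. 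To repair the argument you should adopt the ordering trick and, above all, the merged-window accounting described above.
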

\begin{proof} 
Note that $j_1-i_2\geq k_2+1$ and $j_2-i_1\geq 1$. By Lemma \ref{ex1},
it suffices to consider the case either $j_1-i_2\geq k_1+k_2+1$ or 
$j_2-i_1\geq k_1+k_2+1$.
Note that we take one half of the sum as to avoid double counting. Alternatively, we may also assume $j_1-i_2\geq j_2-i_1$. In that case, $j_2-i_1\leq k_1+k_2$ as 
both $k_2<j_1-i_1\leq k_1+k_2+1$ and $k_2<j_2-i_2 \leq k_1+k_2+1$. Hence, $j_2-i_1$ ranges from $-i_1+k_2+1$ to $\min(-i_1+k_1, k_1+k_2)$. 
Therefore, 
\[ \frac{1}{2}\sum_{(i_1, j_1)\in \mathcal{X}_1} \sum_{ (i_2, j_2)\neq (i_1, j_1)\in \mathcal{X}_1} 
\psi(j_1-i_2,  j_2-i_1)\leq \sum_{\ell=k_1+k_2+1}^{2k_1} c_1(\ell) \sum_{-i_1=k_2+1}^{k_1}  \psi(\ell,  k_2+1-i_1)+\cdots +\psi(\ell, k_1+k_2).\]

Now, consider the case where $(i_2, j_2)\in \mathcal{X}_2$. Then $j_1-i_2\geq 2k_2+2$, and $1\leq j_2-i_1\leq k_2-i_1$. 
\[ \sum_{(i_1, j_1)\in \mathcal{X}_1} \sum_{ (i_2, j_2)\in \mathcal{X}_2} 
	\psi(j_1-i_2,  j_2-i_1)\leq 
	\sum_{\ell=k_1+k_2+1}^{2k_1} c_1(\ell)\sum_{-i_1=k_2+1}^{k_1}\sum_{j_2=-k_2}^{k_2} \psi(\ell,  j_2-i_1).\]
Note that the bound $c_1(\ell)$ used above is not sharp. Therefore, we obtain 
\begin{align*}
	&\frac{1}{2} \sum_{(i_1, j_1)\in \mathcal{X}_1} \sum_{ (i_2, j_2)\neq (i_1, j_1)\in \mathcal{X}_1} 
	\psi(j_1-i_2,  j_2-i_1)+ \sum_{(i_1, j_1)\in \mathcal{X}_1} \sum_{ (i_2, j_2)\in \mathcal{X}_2} 
	\psi(j_1-i_2,  j_2-i_1)\\
	\leq &\sum_{\ell=k_1+k_2+1}^{2k_1} c_1(\ell)\sum_{-i_1=k_2+1}^{k_1}
\psi(\ell, -k_2-i_1)+\cdots \psi(\ell, k_1+k_2) \\
	\leq& \sum_{\ell=k_1+k_2+1}^{2k_1} c_1(\ell)  (k_1-k_2)n
\end{align*}
by Lemma \ref{ex1} (c). 
Finally, for each $j_1\in [k_2+1, k_1]$, the number of choices for $i_2$ is 
$j_1-k_2$ choices for $j_1-i_2$ to fall in the range $[k_1+k_2+1, 2k_1]$. Therefore,
\[ \sum_{\ell=k_1+k_2+1}^{2k_1} c_1(\ell)\leq \frac{(k_1-k_2)(k_1-k_2+1)}{2}.\]
\end{proof}

\begin{lemma} \label{le}
\[ \sum_{(i_1, j_1)\in \mathcal{X}_1} \sum_{(i_2, j_2)\in \mathcal{Y}}\psi(j_1-i_2,  j_2-i_1)\leq  \frac{(k_1-k_2)^2(k_1-k_2+1)n}{2}.\]
\end{lemma}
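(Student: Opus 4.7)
The plan is to adapt the strategy used in Lemma \ref{ld}, via the substitution $\ell = j_1 - i_2$ and $u = j_2 - i_1$. For $(i_1, j_1) \in \mathcal{X}_1$ and $(i_2, j_2) \in \mathcal{Y}$, one has $\ell \in [1, k_1+k_2]$ and $u \in [1, 2k_1]$. By Lemma \ref{lem2}(c), the sets $T^{(v)}$ are pairwise disjoint for $v$ in any window of length $k_1+k_2+1$ contained in $[-k_1-k_2, k_1+k_2]$, forcing $\psi(\ell, u) = 0$ unless $\ell = u$ or $u \geq k_1+k_2+1$ (since $\ell \leq k_1+k_2$ always sits inside such a window).

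I would first handle the case $u \geq k_1+k_2+1$. For each fixed $\ell$, the sum $\sum_{u} \psi(\ell, u)$ over a suitable window of length at most $k_1+k_2$ is at most $n$ by Lemma \ref{ex1}(c) or Lemma \ref{lem2}(d). Then I would count the number of triples $(i_1, j_1, i_2)$ producing each value of $\ell$ via an auxiliary counter $c(\ell)$, defined in analogy with the $c_1(\ell)$ of Lemma \ref{ld}. Using the bound $\sum_\ell c(\ell) \leq (k_1-k_2)\,|\mathcal{X}_1| = \frac{(k_1-k_2)^2(k_1-k_2+1)}{2}$ together with the per-$\ell$ estimate $n$ yields exactly the claimed right-hand side. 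For the diagonal case $\ell = u$, each tuple with $j_1+i_1 = j_2+i_2$ contributes $\psi(\ell,\ell)=n$; grouping by the common value $s = j_1+i_1$ and counting preimages in $\mathcal{X}_1$ and $\mathcal{Y}$ separately shows this contribution is absorbed into the main bound.

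The main obstacle is verifying the window-sum bound $\sum_{u \geq k_1+k_2+1} \psi(\ell, u) \leq n$ for $\ell \in [1, k_1]$, which falls outside the range $[k_1+1, 2k_1]$ where Lemma \ref{lem2}(d) applies directly. Overcoming this likely requires either the symmetry $\psi(\ell, u) = \psi(-\ell, -u)$ combined with the partition identity of Lemma \ref{lem1}(c), or the observation that for fixed $i_1$ the total range of $u$ has length at most $k_1+k_2+1$ and can therefore be split into one admissible window plus a small residual that can be absorbed. The remaining bookkeeping mirrors that of Lemmas \ref{l1}--\ref{ld} and is largely routine.
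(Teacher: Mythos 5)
There is a genuine gap, and it sits exactly where you flagged the "main obstacle". Your plan keeps the small difference $\ell=j_1-i_2\in[1,k_1+k_2]$ as the first argument of $\psi$ and needs $\sum_{u\ge k_1+k_2+1}\psi(\ell,u)\le n$. No tool in the paper gives this: the disjointness of the sets $T^{(u)}$ (Lemma \ref{lem2}(c), Lemma \ref{ex1}(c)) is only available for exponents inside $[-k_1-k_2,k_1+k_2]$, whereas every nonzero term in your sum has $u\in[k_1+k_2+1,2k_1]$, where distinct $T^{(u)}$ may overlap and a single $t$ can be counted several times. Neither of your proposed fixes repairs this: the symmetry $\psi(\ell,u)=\psi(-\ell,-u)$ sends $u$ to $-u\in[-2k_1,-k_1-k_2-1]$, still outside the range covered by Lemma \ref{lem1}(c) or Lemma \ref{lem2}, and in your "window splitting" the residual beyond $k_1+k_2$ is precisely the entire nonzero contribution, so nothing is absorbed. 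The paper avoids the issue by swapping the roles: since the quantity being bounded is the number of pairs $g_1\ne g_2$ with $g_2^{j_1-i_2}=g_1^{j_2-i_1}$ and $j_1-i_2\le k_1+k_2$ lies in the injectivity range, it is at most $\psi(j_2-i_1,\,j_1-i_2)$; one then fixes the \emph{large} difference $\ell=j_2-i_1$ through the counter $c_3(\ell)$, with $\sum_\ell c_3(\ell)=\tfrac{(k_1-k_2)(k_1-k_2+1)}{2}$, and for each fixed $j_1$ windows over the \emph{second} argument $j_1-i_2\in[j_1-k_2,j_1+k_2]\subset[1,k_1+k_2]$, where Lemma \ref{ex1}(c) does apply, giving $(k_1-k_2)n$ per $\ell$ and hence the stated bound.

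Two further points would sink the write-up even if the window bound were granted. First, the diagonal case: you cannot let tuples with $j_1+i_1=j_2+i_2$ "contribute $\psi(\ell,\ell)=n$" and absorb them — for $k_1=k_2+1$ the right-hand side is only $n$ while there can be on the order of $k_2$ such tuples, so the lemma would be false under that reading. The correct resolution is that the summand is the Case (4) count of Lemma \ref{lem3} with $g_1\ne g_2$, which vanishes whenever both exponents are at most $k_1+k_2$ (by $|T^{(m)}|=n$, Lemma \ref{lem2}); equivalently, the term $\psi(\cdot,\cdot)$ only enters the intersection bound of Lemma \ref{lem3} when the larger difference exceeds $k_1+k_2$, so the diagonal with both differences small never appears. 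Second, your bookkeeping does not produce the claimed constant: the number of triples $(i_1,j_1,i_2)$ is $2k_2\,|\mathcal{X}_1|$, not $(k_1-k_2)|\mathcal{X}_1|$, so "$\sum_\ell c(\ell)\le(k_1-k_2)|\mathcal{X}_1|$" is unjustified under your grouping; the factor $k_1-k_2$ arises naturally only in the paper's grouping, where it counts the values of $j_1$ after the window over $i_2$ has been executed.
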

\begin{proof} Note that $k_1+k_2\geq j_1-i_2\geq 1$ and $1\leq j_2-i_1 \leq 2k_1$. By Lemma \ref{x1} (b), $\psi(j_1-i_2,  j_2-i_1)$ is nonzero only if $j_2-i_1\geq k_1+k_2+1$, which implies $j_2\geq k_2+1$. 
Recall that 
\[ c_3(\ell)=|\{ (j_2, i_1) : j_2\in [k_2, k_1], i_1\in [-k_1, -k_2-1] : 
j_2-i_1=\ell\}|.\] 
Using this, the sum in question becomes:
\begin{align*}
	&\sum_{(i_1, j_1)\in \mathcal{X}_1} \sum_{(i_2, j_2)\in \mathcal{Y}}\psi(j_1-i_2,  j_2-i_1)\\
	\leq&\sum_{\ell=k_1+1}^{2k_1} c_3(\ell) \sum_{j_1=k_2+1}^{k_1} \sum_{i_2\in [-k_2, k_2]^*} \psi(\ell, j_1-i_2)\\
	\leq&\sum_{\ell=k_1+1}^{2k_1} c_3(\ell) (k_1-k_2)n\\
	 \leq&  \frac{(k_1-k_2)^2(k_1-k_2+1)n}{2}.
\end{align*}
\end{proof}

It is more complicated dealing with $|S(i_1, j_1)\cap S(i_1, j_2)|$ when $(i_1, j_1)\in \mathcal{X}_2$ and $(i_2, j_2)\in \mathcal{Y}$. In this case, $j_2-i_1\geq 1$ and
$-2k_2 \leq j_1-i_2 \leq 2k_2$. Note that if $j_1-i_2=0$, we then have the extra term
$(n-1)\psi(j_2-i_1, 0)$. By Lemma \ref{lem3}, $\psi(j_2-i_1,0)\neq 0$ only if $j_2-i_1 \geq k_1+k_2+1$. Recall that $i_1\in [-k_2-M, -k_2-1]$ and hence, we only need to consider
$\ell=k_1+k_2+1$ to $k_1+k_2+M$. By assumption, all such $\ell$ is composite so by Lemma \ref{lem8}, $\psi(\ell, 0)\leq 3$. Let
\[ c_5(\ell)=|\{ (j_2, i_1) : j_2\in [k_2+1, k_1], i_1\in [-k_2-M, -k_2-1] : 
j_2-i_1=\ell\}|.\]
Note that if $j_2=k_1$, there are $M$ choices for $i_1$ and the number of choices reduced by $1$ as $j_2$ reduced by $k_1-M+1$. Hence, 
\[ \sum_{\ell=k_1+k_2+1}^{k_1+k_2+M} c_5(\ell)\leq \frac{M(M+1)}{2}.\]
Therefore, by Lemma \ref{lem8} (b), 
\begin{equation} \label{x1}
\sum_{(i_1, j_1)\in \mathcal{X}_2} \sum_{(i_2, j_2)\in \mathcal{Y}} \psi(j_2-i_1, 0)(n-1)\leq (3/2)M(M+1)n.
\end{equation} 
Next, we deal with 
$\psi(\ell, j_1-i_2)$ with $j_1-i_2\neq 0$. Observe that $j_1-k_2\leq j_1-i_2\leq j_1+k_2$. 
Therefore,
\begin{equation}\label{x3}
\begin{aligned}
& \sum_{(i_1, j_1)\in \mathcal{X}_2} \sum_{(i_2, j_2)\in \mathcal{Y}, i_2\ne j_1}\psi(j_1-i_2,  j_2-i_1)\\
 \leq&\sum_{\ell=k_1+k_2+1}^{k_1+k_2+M} c_5(\ell) \sum_{j_1 \in [-k_2, k_2]^*}  \sum_{j_1\neq i_2\in [-k_2, k_2]^*} \psi(\ell, j_1-i_2)\\
\leq& \sum_{\ell=k_1+k_2+1}^{k_1+k_2+M} c_5(\ell)2k_2n \\
  \leq& k_2M(M+1)n.
\end{aligned}
\end{equation}

Next, we apply our previous results to find a lower bound of $|G'|$. 
Recall that 
\begin{align*}
	|G'|=&|G-\bigcup_{(i,j)\in \mathcal{Z}} S(i,j)|+|\bigcup_{(i,j)\in \mathcal{X}} S(i,j)|
	-| (\bigcup_{(i,j)\in \mathcal{X}} S(i,j))\cap (G-\bigcup_{(i,j)\in \mathcal{Y}} S(i,j))|\\
	&- | (\bigcup_{(i,j)\in \mathcal{X}} \bigcup_{u\in [-k_2, k_1]^*} |S(i,j))\cap T^{(u)}|.
\end{align*}
Applying Lemmas \ref{lc}, \ref{ld}, \ref{le}  and inequalities (\ref{x1}), (\ref{x3}) and that the smallest prime factor of $k_1+k_2+1$ is $p$, we then obtain 
\begin{equation}
\begin{aligned}
	& \frac{k_1+(4M-1)k_2}{2}(n^2-n) <  [(2M+4k_2)(k_1-k_2)n+4(k_1-k_2)^2n+  k_2(k_2+1)n \\
& 
+\frac{(k_1-k_2)3\sqrt{p}n}{2}
 +(k_1-k_2)^2(k_1-k_2+1)n +(3/2)M(M+1)n+ k_2M(M+1)n.
\end{aligned}
\end{equation}
Set
\begin{align*}
	A=&k_1+(4M-1)k_2,\\
	B=&(4M+8k_1+3\sqrt{p})(k_1-k_2)+2k_2(k_2+1)+2(k_1-k_2)^2(k_1-k_2+1)+(3+2k_2)M(M+1).
\end{align*} 
We conclude that if $n\ge\frac{B}{A}+1$, then 
no lattice tiling of $\mathbb{Z}^n$ by $\mathcal{B}(n,2,k_1,k_2)$ exists.
This completes the proof of Theorem~\ref{mainthm1}.

\medskip

To get a good understanding how large $B/A$ can be, we set  
$M=1$ and that $3\sqrt{p}\leq 3\sqrt{k_1}$. We then obtain 
\[ \frac{B}{A}+1\le 2(k_1-k_2)^2+12(k_1-k_2)+ 2k_2+8.\]
This proves Corollary~\ref{coro1}.

\section{Conclusion}\label{conclu}
This work presents three advances in the theory of lattice tilings with limited magnitude error balls:
\begin{enumerate}
	\item \textbf{Definitive classification for $k_1=3$ and $k_2=0$:} For $\mathcal{B}(n,2,3,0)$, we fully characterize lattice tiling existence:
	\begin{itemize}
		\item Non-existence for all $n\ge4$,
		\item Explicit construction for $n=3$ via generator $T=\{1,10,26\}$ in $\mathbb{Z}_{37}$.
	\end{itemize}
	\item \textbf{Complete resolution for $k_1=k_2+1$:} We establish that no lattice tiling of $\mathbb{Z}^n$ by $\mathcal{B}(n,2,k,k-1)$ for all $n\ge3$ and $k\ge2$.
	\item \textbf{General non-existence theorem:} For any integer $k_1>k_2\ge0$ with $k_1+k_2+1$ is composite, we demonstrate the non-existence of lattice tilings of $\mathbb{Z}^n$ by $\mathcal{B}(n,2,k_1,k_2)$ in all sufficiently high dimensions $n$.
\end{enumerate}
For the case where $k_1+k_2+1=p$ is a prime number, the problem becomes more intricate, and our current method can be applied only if $G$ is not $p$-elementary.
In other words, our method works if $|G|$ is not a $p$-power. 
Furthermore, our approach extends naturally to higher values of $t$ (i.e., $t\ge3$), which we plan to explore in future work.

\section*{Acknowledgements}
Ran Tao is partially supported by the National Key Research and Development Program of China (Grant No. 2022YFA1004900), the National Natural Science Foundation of China (Grant No. 12401437), and the Shandong Provincial Natural Science Foundation (Grant No. ZR2022QA069).
Tao Zhang is partially supported by the Fundamental Research Funds for the Central Universities (Grant No. QTZX24082), and Natural Science Basic Research Program of Shaanxi  (Program No. 2025JC-YBMS-048).


\begin{thebibliography}{10}
	
	\bibitem{BE13}
	S.~Buzaglo and T.~Etzion.
	\newblock Tilings with {$n$}-dimensional chairs and their applications to
	asymmetric codes.
	\newblock {\em IEEE Trans. Inform. Theory}, 59(3):1573--1582, 2013.
	
	\bibitem{CSBB10}
	Y.~Cassuto, M.~Schwartz, V.~Bohossian, and J.~Bruck.
	\newblock Codes for asymmetric limited-magnitude errors with application to
	multilevel flash memories.
	\newblock {\em IEEE Trans. Inform. Theory}, 56(4):1582--1595, 2010.
	
	\bibitem{E22}
	T.~Etzion.
	\newblock {\em Perfect codes and related structure}.
	\newblock World Scientific, 2022.
	
	\bibitem{GW25}
	Z.~Guan, H.~Wei, and Z.~Xiang.
	\newblock On lattice tilings of asymmetric limited-magnitude balls
	{B(n,2,m,m-1)}.
	\newblock arXiv: 2501.08636.
	
	\bibitem{HS86}
	D.~Hickerson and S.~Stein.
	\newblock Abelian groups and packing by semicrosses.
	\newblock {\em Pacific J. Math.}, 122(1):95--109, 1986.
	
	\bibitem{HA12}
	P.~Horak and B.~F. AlBdaiwi.
	\newblock Diameter perfect {L}ee codes.
	\newblock {\em IEEE Trans. Inform. Theory}, 58(8):5490--5499, 2012.
	
	\bibitem{KBE11}
	T.~{Kl\o ve}, B.~{Bose}, and N.~{Elarief}.
	\newblock Systematic, single limited magnitude error correcting codes for flash
	memories.
	\newblock {\em IEEE Trans. Inform. Theory}, 57(7):4477--4487, 2011.
	
	\bibitem{KLNY11}
	T.~Kl{\o}ve, J.~Luo, I.~Naydenova, and S.~Yari.
	\newblock Some codes correcting asymmetric errors of limited magnitude.
	\newblock {\em IEEE Trans. Inform. Theory}, 57(11):7459--7472, 2011.
	
	\bibitem{KLY12}
	T.~Kl{\o}ve, J.~Luo, and S.~Yari.
	\newblock Codes correcting single errors of limited magnitude.
	\newblock {\em IEEE Trans. Inform. Theory}, 58(4):2206--2219, 2012.
	
	\bibitem{S12}
	M.~Schwartz.
	\newblock Quasi-cross lattice tilings with applications to flash memory.
	\newblock {\em IEEE Trans. Inform. Theory}, 58(4):2397--2405, 2012.
	
	\bibitem{S14}
	M.~Schwartz.
	\newblock On the non-existence of lattice tilings by quasi-crosses.
	\newblock {\em European J. Combin.}, 36:130--142, 2014.
	
	\bibitem{S67}
	S.~Stein.
	\newblock Factoring by subsets.
	\newblock {\em Pacific J. Math.}, 22:523--541, 1967.
	
	\bibitem{S84}
	S.~Stein.
	\newblock Packings of {${\bf R}^{n}$} by certain error spheres.
	\newblock {\em IEEE Trans. Inform. Theory}, 30(2, part 2):356--363, 1984.
	
	\bibitem{S90}
	S.~Stein.
	\newblock The notched cube tiles {${\bf R}^n$}.
	\newblock {\em Discrete Math.}, 80(3):335--337, 1990.
	
	\bibitem{SS94}
	S.~Stein and S.~Szab{\'o}.
	\newblock {\em Algebra and tiling}, volume~25 of {\em Carus Mathematical
		Monographs}.
	\newblock Mathematical Association of America, Washington, DC, 1994.
	
	\bibitem{S86}
	S.~Szab{\'o}.
	\newblock Some problems on splittings of groups.
	\newblock {\em Aequationes Math.}, 30(1):70--79, 1986.
	
	\bibitem{S87}
	S.~Szab{\'o}.
	\newblock Some problems on splittings of groups. {II}.
	\newblock {\em Proc. Amer. Math. Soc.}, 101(4):585--591, 1987.
	
	\bibitem{T98}
	U.~Tamm.
	\newblock Splittings of cyclic groups and perfect shift codes.
	\newblock {\em IEEE Trans. Inform. Theory}, 44(5):2003--2009, 1998.
	
	\bibitem{WS22}
	H.~Wei and M.~Schwartz.
	\newblock On tilings of asymmetric limited-magnitude balls.
	\newblock {\em European J. Combin.}, 100:Paper No. 103450, 21, 2022.
	
	\bibitem{WWS21}
	H.~Wei, X.~Wang, and M.~Schwartz.
	\newblock On lattice packings and coverings of asymmetric limited-magnitude
	balls.
	\newblock {\em IEEE Trans. Inform. Theory}, 67(8):5104--5115, 2021.
	
	\bibitem{W95}
	A.~J. Woldar.
	\newblock A reduction theorem on purely singular splittings of cyclic groups.
	\newblock {\em Proc. Amer. Math. Soc.}, 123(10):2955--2959, 1995.
	
	\bibitem{XL20}
	D.~Xie and J.~Luo.
	\newblock Asymmetric single magnitude four error correcting codes.
	\newblock {\em IEEE Trans. Inform. Theory}, 66(9):5322--5334, 2020.
	
	\bibitem{XL21}
	D.~Xie and J.~Luo.
	\newblock New results on asymmetric single correcting codes of magnitude four.
	\newblock {\em IEEE Trans. Inform. Theory}, 67(8):5079--5087, 2021.
	
	\bibitem{YKB13}
	S.~Yari, T.~Kl{\o}ve, and B.~Bose.
	\newblock Some codes correcting unbalanced errors of limited magnitude for
	flash memories.
	\newblock {\em IEEE Trans. Inform. Theory}, 59(11):7278--7287, 2013.
	
	\bibitem{YZZG20}
	Z.~Ye, T.~Zhang, X.~Zhang, and G.~Ge.
	\newblock Some new results on {S}plitter sets.
	\newblock {\em IEEE Trans. Inform. Theory}, 66(5):2765--2776, 2020.
	
	\bibitem{ZG16}
	T.~Zhang and G.~Ge.
	\newblock New results on codes correcting single error of limited magnitude for
	flash memory.
	\newblock {\em IEEE Trans. Inform. Theory}, 62(8):4494--4500, 2016.
	
	\bibitem{ZG18}
	T.~{Zhang} and G.~{Ge}.
	\newblock On the nonexistence of perfect splitter sets.
	\newblock {\em IEEE Trans. Inform. Theory}, 64(10):6561--6566, 2018.
	
	\bibitem{ZLG2023}
	T.~Zhang, Y.~Lian, and G.~Ge.
	\newblock On lattice tilings of {$\Bbb Z^n$} by limited magnitude error balls
	{$B(n,2,1,1)$}.
	\newblock {\em IEEE Trans. Inform. Theory}, 69(11):7110--7121, 2023.
	
	\bibitem{ZZG17}
	T.~{Zhang}, X.~{Zhang}, and G.~{Ge}.
	\newblock Splitter sets and $k$-radius sequences.
	\newblock {\em IEEE Trans. Inform. Theory}, 63(12):7633--7645, 2017.
	
\end{thebibliography}
\end{document}